\newtheorem{defi}{Definition}[section]
\newtheorem{example}[defi]{Example}
\newtheorem{thm}[defi]{Theorem}
\newtheorem{coro}[defi]{Corollary}
\newtheorem{lema}[defi]{Lemma}
\newtheorem{pro}[defi]{Proposition}
\newtheorem{rmk}[defi]{Remark}
\providecommand{\keywords}[1]{\textbf{\textit{Keywords---}} #1}
\providecommand{\MSC}[1]{\textbf{\textit{MSC---}} #1}
\title{Hopf-Galois module structure of \\ degree $p$ extensions of $p$-adic fields}
\author[1,2,3]{Daniel Gil-Muñoz}
\date{}
\affil[1]{\footnotesize Institut de Matemàtica, Universitat de Barcelona, Gran Via de les Corts Catalanes, 585, 08007 Barcelona, Spain}
\affil[2]{\footnotesize Charles University, Faculty of Mathematics and Physics, Department of Algebra, Sokolovska 83, 18600 Praha 8, Czech Republic}
\affil[3]{\footnotesize Dipartimento di Matematica, Università di Pisa, Largo B. Pontecorvo, 5, 56127 Pisa, Italy}
\begin{document}
\maketitle

\begin{abstract}
Let $p$ be an odd prime number. For a degree $p$ extension of $p$-adic fields $L/K$, we give a complete characterization of the condition for the ring of integers $\mathcal{O}_L$ to be free as a module over its associated order in the unique Hopf-Galois structure on $L/K$.
\end{abstract}

\MSC{11S15, 12F10, 16T05}

\keywords{Ring of integers, associated order, Hopf-Galois structure, ramification jump}

\section{Introduction}

The classical Galois module theory seeks to describe the module structure of the ring of integers of a Galois extension of number or $p$-adic fields. The ground ring of such a module is chosen as an object related with the Galois group. The group algebra of the Galois group (i.e, the Galois group algebra) with scalars in the ground ring of integers of the extension appears as a natural choice. Alternatively, we can choose the associated order of the ring of integers, which is defined as the subset of the Galois group algebra with scalars in the ground field (not just the ring of integers) whose action on the top field leaves its ring of integers invariant. This choice is optimal in the sense that it is the only order in the Galois group algebra over which the ring of integers may be possibly a free module.

This situation is generalized by means of Hopf-Galois theory, whose starting point is the notion of a Hopf-Galois structure on a field extension: a pair formed by a Hopf algebra and an action on the top field of the extension in such a way that it mimics the properties of the Galois group algebra. The extensions that admit some Hopf-Galois structure are called Hopf-Galois. If the extension is Galois, the Galois group algebra together with its natural action is a Hopf-Galois structure on the Galois extension, usually known as the classical Galois structure, but it may admit others. Under this terminology, there are non-Galois extensions that are Hopf-Galois.

The notion of associated order in the Galois group algebra of a Galois extension is naturally generalized to any Hopf-Galois structure on a Hopf-Galois extension $L/K$. In this way, it makes sense to study the module structure of the top ring of integers $\mathcal{O}_L$ over the associated order $\mathfrak{A}_H$ in any Hopf-Galois structure $H$ on the extension. The module structure of $\mathcal{O}_L$ is completely determined if $\mathcal{O}_L$ is free as a module over $\mathfrak{A}_H$. Indeed, in that case we have that $\mathcal{O}_L\cong\mathfrak{A}_H$ as $\mathfrak{A}_H$-modules. Whenever $H$ is the only Hopf-Galois structure on $L/K$, we will denote $\mathfrak{A}_{L/K}\equiv\mathfrak{A}_H$.

The problem of characterizing the condition that $\mathcal{O}_L$ is $\mathfrak{A}_H$-free for a Hopf-Galois structure $H$ on an extension $L/K$ of $p$-adic fields is of long-standing interest, among other reasons because there is not uniformity in the behaviour of $\mathcal{O}_L$ as an $\mathfrak{A}_H$-module for different choices of extensions $L/K$. A class of extensions illustrating this phenomenon is that of cyclic degree $p$ extensions of $p$-adic fields. Such an extension $L/K$ admits a unique Hopf-Galois structure. When $L/K$ is unramified, then it is necessarily Galois, and $\mathcal{O}_L$ is free over the associated order $\mathfrak{A}_{L/K}$. Otherwise, $L/K$ is totally ramified. In that case, a complete characterization of the $\mathfrak{A}_{L/K}$-freeness of $\mathcal{O}_L$ is as follows.

\begin{thm}[F. Bertrandias, J.P. Bertrandias, M.J. Ferton]\label{thm:cyclicpfreeness} Let $L/K$ be a totally ramified cyclic degree $p$ extension of $p$-adic fields and let $e$ be the ramification index of $K/\mathbb{Q}_p$. Let $G\coloneqq\mathrm{Gal}(L/K)=\langle\sigma\rangle$, let $t$ be the ramification jump of $L/K$ and let $a$ be the remainder of $t$ mod $p$.
\begin{enumerate}[label=\arabic*)]
    \item\label{thm:cyclicpfreeness1} If $a=0$, then $\mathfrak{A}_{L/K}$ is the maximal $\mathcal{O}_K$-order in $K[G]$ and $\mathcal{O}_L$ is $\mathfrak{A}_{L/K}$-free.
    \item\label{thm:cyclicpfreeness2} If $a\neq0$, then $\mathfrak{A}_{L/K}$ is $\mathcal{O}_K$-free with generators $\{\pi_K^{-n_i}f^i\}_{i=0}^{p-1}$, where $f=\sigma-\mathrm{Id}$, \\ $n_i=\mathrm{min}_{0\leq j\leq p-1-i}(\nu_{i+j}-\nu_j)$ and $\nu_i=\Big\lfloor\frac{a+it}{p}\Big\rfloor$ for every $0\leq i\leq p-1$.
    \item\label{thm:cyclicpfreeness3} If $a\mid p-1$, then $\mathcal{O}_L$ is $\mathfrak{A}_{L/K}$-free. Moreover, if $t<\frac{pe}{p-1}-1$, the converse holds.
    \item\label{thm:cyclicpfreeness4} If $t\geq\frac{pe}{p-1}-1$, then $\mathcal{O}_L$ is $\mathfrak{A}_{L/K}$-free if and only if the length of the expansion of $\frac{t}{p}$ as continued fraction is at most $4$.
\end{enumerate}
\end{thm}

It is known that $0<t\leq\frac{pe}{p-1}$, so these statements cover all the possibilities. All of them are stated in \cite{bertrandiasbertrandiasferton,bertrandiasferton}, and in some cases short sketches of proof are given. Complete and detailed proofs appear in Ferton's PhD dissertation \cite[Chapitre II]{fertonthesis}. The first three statements have been reworked by Del Corso, Ferri and Lombardo \cite{delcorsoferrilombardo}, who provided an alternative proof using the notion of minimal index of a Galois extension of $p$-adic fields. Moreover, Byott, Childs and Elder \cite{byottchildselder} used their theory of scaffolds to prove the characterization of the freeness provided by Theorem \ref{thm:cyclicpfreeness} \ref{thm:cyclicpfreeness3} under the slightly weaker assumption that $t<\frac{pe}{p-1}-2$ (see Example 3.3 in the aforementioned reference). A scaffold on a degree $p$ extension of $p$-adic fields $L/K$ is an element $\Phi$ from a $K$-algebra $A$ acting $K$-linearly on $L$ together with elements $\{\lambda_n\}_{n\in\mathbb{Z}}$ of $L$ in such a way that the $L$-valuations $v_L(\Phi\cdot\lambda_n)$ are determined up to a precision $\mathfrak{c}\in\mathbb{Z}_{\geq1}$. Under the assumption that the precision has specific lower bounds, criteria for the freeness of $\mathcal{O}_L$ can be derived.

In this paper, we will consider the problem of studying the module structure of the ring of integers in an arbitrary degree $p$ extension $L/K$ of $p$-adic fields. Of course, such an extension $L/K$ is not necessarily Galois, but it is always Hopf-Galois, and the Galois group $G$ of its normal closure $\widetilde{L}$ over $K$ satisfies $G\cong C_p\rtimes C_r$, where $r\mid p-1$. Consequently, $G$ admits a unique order $r$ subgroup, whence $\widetilde{L}/K$ possesses a intermediate field $M$ such that $[M:K]=r$. It was found by Childs \cite{childs1989} that $L/K$ admits a unique Hopf-Galois structure $H$. Therefore, the relevant problem in this context is to characterize the condition that $\mathcal{O}_L$ is $\mathfrak{A}_{L/K}$-free. The case that $L/K$ is Galois corresponds to the choice $r=1$, for which the solution is given in Theorem \ref{thm:cyclicpfreeness}. The case that $r=2$ is the one for which $G$ is a dihedral group of order $2p$. For these extensions, the author found a complete characterization, which is exactly the same as in the Galois case when $\widetilde{L}/L$ is unramified \cite[Proposition 4.1]{gildegpdihedral} and has a similar flavor otherwise \cite[Theorem 1.2]{gildegpdihedral}. In this last case,  the relevant number is the remainder mod $p$ of the number $\frac{p+t}{2}$, where $t$ is the largest ramification jump of $\widetilde{L}/K$. From now on, for a Galois extension of $p$-adic fields with Galois group $G$, we ignore any possible jump prior to $G_1$ in the chain of ramification groups, as it is done for instance in \cite[\textsection 1.2]{berge1978}.

We shall provide a complete picture for the $\mathfrak{A}_{L/K}$-freeness of $\mathcal{O}_L$ with no restriction on our degree $p$ extension $L/K$. In Section \ref{sec:redtotram} we will show that there is no loss of generality in assuming that $\widetilde{L}/K$ is totally ramified. Namely, calling $K'$ (resp. $L'$) the inertia field of $\widetilde{L}/K$ (resp. $\widetilde{L}/L$), the extension $L'/K'$ is of degree $p$ with normal closure $\widetilde{L}$, which is totally ramified, and $\mathcal{O}_L$ is $\mathfrak{A}_{L/K}$-free if and only if $\mathcal{O}_{L'}$ is $\mathfrak{A}_{L'/K'}$-free. 

Our main result establishes a characterization for the $\mathfrak{A}_{L/K}$-freeness of $\mathcal{O}_L$ when $\widetilde{L}/K$ is totally ramified. Whenever $L/K$ is not Galois, its ramification jump $\ell$ is defined from its Herbrand function; see \cite[Chapter IV, \textsection 3, Remark 2]{serre}. In this case, we have $\ell=\frac{t}{r}$, where $t$ is the ramification jump of $\widetilde{L}/K$. This quantity has already appeared in \cite[Definition before Theorem 2.7]{greither1992} as the phantom ramification number of the extension. It is an integer number only when $r=1$, but in any case it is a $p$-adic integer. It will play the role of $t$ in Theorem \ref{thm:cyclicpfreeness}. Under this perspective, the relevant quantity turns out to be the residue class mod $p$ of $\ell$, also denoted by $a$. 

\begin{thm}\label{maintheorem} Let $L/K$ be a degree $p$ extension of $p$-adic fields with normal closure $\widetilde{L}$, and suppose that $\widetilde{L}/{K}$ is totally ramified. Let $G\coloneqq\mathrm{Gal}(\widetilde{L}/K)$ and let $\mathfrak{A}_{L/K}$ be the associated order of $\mathcal{O}_L$ in the unique Hopf-Galois structure on $L/K$. Write $e$ for the ramification index of $K/\mathbb{Q}_p$, $\ell$ for the ramification jump of $L/K$ and $a$ for the residue class mod $p$ of $\ell$. In addition, let $c$ the remainder of $r\ell$ mod $r$ and $b=\ell-\frac{pc}{r}$.
\begin{enumerate}[label=\arabic*)]
    \item\label{mainthm1} If $a=0$, then $\mathfrak{A}_{L/K}$ is the maximal $\mathcal{O}_K$-order in $H$ and $\mathcal{O}_L$ is $\mathfrak{A}_{L/K}$-free.
    \item\label{mainthm2} If $a\neq0$, $\mathfrak{A}_{L/K}$ is $\mathcal{O}_K$-free with generators $\{\pi_K^{-w(i)}\Psi^i\}_{i=0}^{p-1}$, where $\Psi\in M[\sigma]$ is as in Theorem \ref{thm:hopfalgdimp}, \\ $w(i)=\mathrm{min}_{0\leq j\leq p-1-i}(d(i+j)-d(j))$ and $d(i)=\Big\lfloor\frac{a+ib}{p}\Big\rfloor$ for every $0\leq i\leq p-1$.
    \item\label{mainthm3} If $a\mid p-1$, then $\mathcal{O}_L$ is $\mathfrak{A}_{L/K}$-free. Moreover, if $\ell<\frac{pe}{p-1}-1$, then the converse holds.
    \item\label{mainthm4} If $\ell\geq\frac{pe}{p-1}-1$, then $\mathcal{O}_L$ is $\mathfrak{A}_{L/K}$-free if and only if the length of the continued fraction expansion of $\frac{b}{p}$ is at most $4$.
\end{enumerate}
\end{thm}

Since $0<\ell\leq\frac{pe}{p-1}$, the statements \ref{mainthm1}-\ref{mainthm4} cover all the possibilities for a totally ramified degree $p$ extension of $p$-adic fields. Moreover, if we specify the extension $L/K$ to be Galois, we recover Theorem \ref{thm:cyclicpfreeness} (with the exception that $\Psi\neq f$, so in that case Theorem \ref{maintheorem} provides an alternative basis).

The techniques used in the proofs are substantially different at each case of Theorem \ref{maintheorem}. The case that $a=0$ is considered at Section \ref{sec:maxram} and corresponds to the situation in which $L/K$ is maximally ramified, meaning that $\ell=\frac{pe}{p-1}$ achieves its greatest possible value. The extension $\widetilde{L}/M$ is a Galois degree $p$ extension of $p$-adic fields, which satisfies the conditions of Theorem \ref{thm:cyclicpfreeness} \ref{thm:cyclicpfreeness1}, and some of the arguments used in the proof can be adapted to the extension $L/K$. The maximal ramification at $\widetilde{L}/M$ implies that $\widetilde{L}=M(\gamma)$ for a suitable $p$-th root $\gamma$ of $\pi_M$, which is an eigenvector of all the elements of $\mathrm{Gal}(\widetilde{L}/M)$ (regarded as $M$-endomorphisms of $\widetilde{L}$). From this situation, we shall find a $p$-th root $\alpha$ of $\pi_K$ for which $L=K(\alpha)$ and which is an eigenvector of all the elements of $H$ (regarded as $K$-endomorphisms of $L$). From this description of the action of $H$ one can study the module structure of $\mathcal{O}_L$ so as to prove Theorem \ref{maintheorem} \ref{mainthm1}.

The arguments used for the remaining cases hinge on a result by Childs \cite{childs1989} consisting on a description of the underlying Hopf algebra at the unique Hopf-Galois structure $H$ on $L/K$, by means of the classification of rank $p$ Hopf algebras by Tate and Oort \cite{tateoort}. In Section \ref{sec:typicalp}, we shall review this description following the formulation at \cite[Chapter 4]{childs}, according to which $H=K[\Psi]$ for a specific element $\Psi$ that is constructed from an eigenspace decomposition of $H^+\coloneqq\mathrm{Ker}(\epsilon_H)$, where $\epsilon_H$ is the counity of $H$ as a $K$-Hopf algebra. Moreover, it follows from this description that $\Psi$ satisfies a degree $p$ equation of the type $\Psi^p=\lambda\Psi$, where $\lambda\in\mathcal{O}_K$. We shall find a description of $\lambda$ from which its $K$-valuation can be explicitly determined (see Corollary \ref{coro:eqdegp}). This fact improves the situation at the already known solutions for the cases $r=1$ and $r=2$, where the chosen generator of $H$ at each case satisfies a much trickier equation (see \cite[Proposition 6.8 (2)]{delcorsoferrilombardo} for the Galois case and \cite[Proposition 3.2]{gildegpdihedral} for the dihedral one), and lightens some of the required calculations towards a proof of the statements \ref{mainthm3} and \ref{mainthm4} at Theorem \ref{maintheorem}. More specifically, the equation $\Psi^p=\lambda\Psi$ allows to calculate easily the products of the basis elements of $\mathfrak{A}_{L/K}$, which will be needed for studying the $\mathfrak{A}_{L/K}$-freeness of $\mathcal{O}_L$.

The degree $p$ extensions with $a\neq0$ have been extensively studied by Elder \cite{elder2018}, who refers to them as \textit{typical}. His main result is a complete description of such extensions with the determination of a primitive element satisfying a specific degree $p$ equation, that depends on the ramification parameters (see \cite[Theorem 2.2]{elder2018}). The ingredient of a Hopf-Galois structure other than its underlying Hopf algebra is the action of the underlying Hopf algebra on the top field of the extension. We shall review Elder's result that $\Psi$ raises valuations of selected elements $\lambda_n\in\mathcal{O}_L$ with $L$-valuation $n\in\mathbb{Z}$ by at least $b$, and exactly $b$ if and only if $p\nmid n$ (see \cite[Theorem 3.5]{elder2018}). In the end, this fact explains the presence of $b$ at the description of $\mathfrak{A}_{L/K}$ from Theorem \ref{maintheorem} \ref{mainthm2}.

These results on the Hopf-Galois structure on $L/K$ will provide a suitable setting to characterize the $\mathfrak{A}_{L/K}$-freeness of $\mathcal{O}_L$. It turns out that $\Psi$ together with the elements $\lambda_n$ form a scaffold on $L/K$ with precision $\mathfrak{c}=pe-(p-1)\ell$. Specifying the general result \cite[Theorem 3.1]{byottchildselder} to obtain criteria from a scaffold, Elder obtained that freeness is equivalent to $a\mid p-1$ under the slightly stronger assumption that $\ell<\frac{pe}{p-1}-2$; see \cite[Corollary 3.6]{elder2018}. In Section \ref{sec:freescaffold}, we shall see that we can actually reach the same conclusions by assuming $\ell<\frac{pe}{p-1}-1$, so as to derive Theorem \ref{maintheorem} \ref{mainthm3}. In analogy with the Galois case, typical degree $p$ extensions in this situation will be referred to as non-almost maximally ramified.

In the same way, the typical degree $p$ extensions with $\ell\geq\frac{pe}{p-1}-1$, will be called almost maximally ramified, and they will be considered in Section \ref{sec:freecontfrac}. In this situation, the necessary and sufficient conditions for freeness from the theory of scaffolds do not apply anymore. We shall prove Theorem \ref{maintheorem} \ref{mainthm4}, according to which the criterion for freeness depends on the length $n$ of the continued fraction expansion of $\frac{b}{p}$. We shall see that the strategy employed in the Galois case (see \cite{bertrandiasbertrandiasferton}, \cite[Chapitre II]{fertonthesis}) is naturally adapted to the general case. This strategy can be summarized as follows. Write $$\frac{b}{p}=[a_0;a_1,\dots,a_n]$$ for the continued fraction expansion of $\frac{b}{p}$. For each $i$ with $1\leq 2i<n$ and each $1\leq r\leq a_{2i}$, let $q_{2i,r}$ be the denominator (in irreducible form) of the even-indexed semiconvergent $[a_0;a_1,\dots,a_{2i-1},r]$. Then the numbers $q_{2i,r}$ parametrize the set $E$ of the integers $1\leq h<p$ such that $1\leq h'<h$ implies $\widehat{h'\frac{a}{p}}>\widehat{h\frac{a}{p}}$ (see Proposition \ref{pro:paramE}), where the hat denotes the fractional part of a rational number. The latter description provides an explicit expression of the numbers $w(i)$ (see Lemma \ref{lem:nuiE}). Given that these numbers appear in the description of an $\mathcal{O}_K$-basis of $\mathfrak{A}_{L/K}$, their values are related with the $\mathfrak{A}_{L/K}$-module structure of $\mathcal{O}_L$. This provides a link between the continued fraction expansion of $\frac{b}{p}$ and the $\mathfrak{A}_{L/K}$-freeness of $\mathcal{O}_L$, which will be exploited so as to obtain a proof of Theorem \ref{maintheorem} \ref{mainthm4}. Our proof remains valid for Galois degree $p$ extensions, but the fact the generator $\Psi$ of the Hopf-Galois satisfies $\Psi^p=\lambda\Psi$, which is much simpler than the one satisfied by the generator $f=\sigma-\mathrm{Id}$ provided in \cite{bertrandiasbertrandiasferton}, we actually obtain an easier proof.



\section{Preliminaries}

Throughout all the paper, all our extensions of fields will be assumed to be finite.

\subsection{Hopf-Galois extensions}

Let $L/K$ be an extension of fields. Suppose that there is a finite-dimensional cocommutative $K$-Hopf algebra $H$ that acts on $L$ by means of a $K$-linear map $\cdot\colon H\otimes_KL\longrightarrow L$. We say that $(H,\cdot)$ is a Hopf-Galois structure on $L/K$ if $L$ is an $H$-module algebra with respect to $\cdot$ (see \cite[Chapter 2, Section 2.1, Page 39]{underwood}) and the map \begin{equation}\label{mapj}
    \begin{array}{rccl}
    j\colon & L\otimes_KH & \longrightarrow & \mathrm{End}_K(L) \\
     & x\otimes h & \longmapsto & y\mapsto x(h\cdot y)
\end{array}
\end{equation} is an isomorphism of $K$-vector spaces. The extension $L/K$ is said to be Hopf-Galois or $H$-Galois if it admits some Hopf-Galois structure. In this context, we shall use the label $H$ for referring both to the underlying Hopf algebra of the Hopf-Galois structure and to the Hopf-Galois structure itself (that is, the pair formed by the underlying Hopf algebra and its action on $L$). If $L/K$ is a Galois extension with Galois group $G$, then $K[G]$ together with its classical action on $L$ is a Hopf-Galois structure on $L/K$, called the classical Galois structure. Moreover, other Hopf-Galois structures may arise in a Galois extension, or in extensions that are not necessarily Galois.

Assume that $L/K$ is separable with normal closure $\widetilde{L}$ and let $G=\mathrm{Gal}(\widetilde{L}/K)$ and $G'=\mathrm{Gal}(\widetilde{L}/L)$. Greither and Pareigis \cite{greitherpareigis} proved that the Hopf-Galois structures on $L/K$ are in bijective correspondence with a special class of permutation subgroups, and each Hopf-Galois structure can be recovered from the corresponding subgroup (see also \cite[Chapter 2]{childs}).

Among the separable Hopf-Galois extensions that are not necessarily Galois, there are those, like the degree $p$ extensions considered in this paper, that are known in the literature as almost classically Galois. These are those separable extensions $L/K$ for which $G'$ admits a normal complement $J$ within $G$, so that $G=J\rtimes G'$. The fixed field $M=\widetilde{L}^J$ will be referred to as the Galois complement of $L/K$. Using Greither-Pareigis theory, it is proved that an almost classically Galois extension is Hopf-Galois and it admits a specific Hopf-Galois structure which, in the case that $J$ is abelian, is given by $$H=M[J]^{G'}=\{h\in M[J]\,\mid\,\tau(h)=h\hbox{ for all }\tau\in G'\},$$ where $G'$ acts on $M$ by means of its classical action on $\widetilde{L}$ and on $J$ by conjugation (which is actually an action because $J$ is a normal subgroup).

Recall that two extensions $L/K$, $M/K$ are linearly disjoint if the compositum $LM$ is canonically isomorphic to the tensor product $L\otimes_KM$ as $K$-algebras. If in addition $H$ is a $K$-Hopf algebra endowing $L$ with $H$-module algebra structure, then $H\otimes_KM$ is an $M$-Hopf algebra endowing $LM$ with $H\otimes_KM$-module algebra structure and $L/K$ is $H$-Galois if and only if $LM/M$ is $H\otimes_KM$-Galois. A particular instance of this phenomenon occurs for an almost classically Galois extension, which is linearly disjoint with its Galois complement, and their compositum is the normal closure.

\subsection{Extensions of $p$-adic fields}\label{sec:prelimpadic}

Let $p$ be an odd prime number. By a $p$-adic field we mean any finite extension of the field $\mathbb{Q}_p$ of $p$-adic numbers. For any $p$-adic field $F$, we call $\mathcal{O}_F$ the ring of integers of $F$, $v_F$ its valuation, $\pi_F$ its uniformizer, $\mathfrak{p}_F$ the ideal generated by $\pi_F$ in $\mathcal{O}_F$ and $\overline{F}$ the residue field of $F$. 

\subsubsection{Ramification theory}

We recall the higher ramification theory for extensions of $p$-adic fields, for which we shall follow \cite[Chapter IV]{serre}. 

Let $E/F$ be a Galois degree $n$ extension of $p$-adic fields and let $G$ be its Galois group. Since $\overline{E}/\overline{F}$ is separable, there is some $x\in\mathcal{O}_E$ such that $\mathcal{O}_E=\mathcal{O}_F[x]$ (see \cite[Chapter III, \textsection6, Proposition 12]{serre}). For an integer number $i\geq-1$, the $i$-th ramification group of $E/F$ is $G_{-1}=G$ if $i=-1$ and $$G_i=\{g\in G\,\mid\,v_E(g(x)-x)\geq i+1\}$$ if $i\geq0$. It is clear that $G_i\supseteq G_{i+1}$ for every $i\geq-1$ and that $G_i$ is trivial for $i$ large enough (namely, $i\geq\mathrm{max}_{g\in G}(g(x)-x)$). Therefore, $\{G_i\}_{i=-1}^{\infty}$ is a filtration of $G$, called the chain of ramification groups of $E/F$. A \textbf{ramification jump} for $E/F$ is an integer $t\geq-1$ such that $G_t\neq G_{t+1}$. The Herbrand function of $E/F$ is the function $\varphi_{E/F}\colon\mathbb{R}_{\geq-1}\longrightarrow\mathbb{R}_{\geq-1}$ defined as $$\varphi_{E/F}(u)=\int_0^u\frac{1}{[G_0:G_t]}dt,$$ where $G_u=G_{\lfloor u\rfloor}$ for all $u\in\mathbb{R}$ (see \cite[Chapter IV, \textsection3]{serre}). It holds that $\varphi_{E/F}$ is continuous and strictly increasing (see \cite[Chapter IV, \textsection3, Proposition 12 a)]{serre}), therefore bijective; call $\psi_{E/F}$ its inverse. If $N$ is an intermediate field of $E/F$, by \cite[Chapter IV, \textsection3, Proposition 15]{serre}, we have transitivity formulas \begin{equation}\label{eq:transitherbrand}
    \varphi_{E/F}=\varphi_{N/F}\varphi_{E/N},\quad \psi_{E/F}=\psi_{E/N}\psi_{N/F}.
\end{equation} 

The notion of Herbrand function extends to the case in which $E/F$ is not Galois as follows: If $E\subseteq\Omega$ and $\Omega/F$ is Galois, the Herbrand function of $E/F$ is defined as $\varphi_{E/F}\coloneqq\varphi_{\Omega/F}\psi_{\Omega/E}$. This definition does not depend on the choice of $\Omega$ because of the equalities \eqref{eq:transitherbrand}. The image of a ramification jump for $\Omega/F$ by $\varphi_{E/F}$ will be referred to as a ramification jump for $E/F$. This might not be an integer number, but in any case it is a $p$-adic integer.



\subsubsection{Module structure of the ring of integers}

Let $L/K$ be an $H$-Galois extension of $p$-adic fields. From \cite[(2.16)]{childs}, we know that $L$ is free of rank one as an $H$-module. Note that if $L/K$ is Galois and $H$ is chosen to be its classical Galois structure, we recover the already known normal basis theorem for Galois extensions. Following this analogy, we will say that any generator $\theta$ of $L$ as an $H$-module generates an $H$-normal basis or that it is an $H$-normal basis generator.

The associated order of $\mathcal{O}_L$ in $H$ is defined as $$\mathfrak{A}_H=\{h\in H\,|\,h\cdot\mathcal{O}_L\subset\mathcal{O}_L\}.$$ This object inherits the underlying ring structure of $H$ and in fact it is an $\mathcal{O}_K$-order in $H$ (it is $\mathcal{O}_K$-free and $K\otimes_{\mathcal{O}_K}\mathfrak{A}_H=H$). Moreover, $\mathcal{O}_L$ is naturally endowed with $\mathfrak{A}_H$-module structure, and it makes sense to ask whether such a module is free. What is more, if $\mathfrak{A}$ is an $\mathcal{O}_K$-order in $H$ such that $\mathcal{O}_L$ is $\mathfrak{A}$-free, then $\mathfrak{A}=\mathfrak{A}_H$. In this sense, the associated order is the best choice in order to study the module structure of $\mathcal{O}_L$.

Given $\theta\in\mathcal{O}_L$ generating an $H$-normal basis, let us call $$\mathfrak{A}_{\theta}=\{h\in H\,|\,h\cdot\theta\in\mathcal{O}_L\}.$$ It is immediate that $\mathfrak{A}_H\subseteq\mathfrak{A}_{\theta}$ and $\mathfrak{A}_{\theta}$ is an $\mathfrak{A}_H$-module. In fact, it is proved that $\mathcal{O}_L=\mathfrak{A}_{\theta}\cdot\theta$ and that $\mathfrak{A}_{\theta}$ is an $\mathfrak{A}_H$-fractional ideal \cite[Proposition 2.3]{gildegpdihedral}. We can study the $\mathfrak{A}_H$-freeness through $\mathfrak{A}_{\theta}$ by the following: 

\begin{pro}\label{pro:hnormalfreeness} Let $L/K$ be a degree $n$ $H$-Galois extension of $p$-adic fields. The following statements are equivalent:
\begin{enumerate}[label=\arabic*)]
    \item\label{pro:hnormalfreeness1} $\mathcal{O}_L$ is $\mathfrak{A}_H$-free.
    \item\label{pro:hnormalfreeness2} There is an element $\theta\in\mathcal{O}_L$ generating an $H$-normal basis for $L$ such that $\mathfrak{A}_H=\mathfrak{A}_{\theta}$.
    \item\label{pro:hnormalfreeness3} There is an element $\theta\in\mathcal{O}_L$ generating an $H$-normal basis for $L$ such that $\mathfrak{A}_{\theta}$ is a ring (namely, a subring of the underlying ring structure of $H$).
    \item\label{pro:hnormalfreeness4} There is an element $\theta\in\mathcal{O}_L$ generating an $H$-normal basis for $L$ such that $\mathfrak{A}_{\theta}$ is principal as $\mathfrak{A}_H$-fractional ideal. 
    \item\label{pro:hnormalfreeness5} For every element $\theta\in\mathcal{O}_L$ generating an $H$-normal basis for $L$, $\mathfrak{A}_{\theta}$ is principal as $\mathfrak{A}_H$-fractional ideal.
\end{enumerate}
\end{pro}

This proposition is proved in \cite{gildegpdihedral}. The equivalence between \ref{pro:hnormalfreeness1}, \ref{pro:hnormalfreeness2}, \ref{pro:hnormalfreeness3} and \ref{pro:hnormalfreeness5} is established in \cite[Proposition 2.4]{gildegpdihedral}. The equivalence between the statements \ref{pro:hnormalfreeness4} and\ref{pro:hnormalfreeness5} is proved in the paragraph immediately following \cite[Proposition 2.4]{gildegpdihedral}. Moreover, it is shown that when the statements hold, any element $\theta$ satisfying at least one of the statements \ref{pro:hnormalfreeness2} or \ref{pro:hnormalfreeness3} actually satisfies both of them.

\subsection{Scaffolds on a degree $p$ extension}\label{sec:scaffoldsp}

The theory of scaffolds was introduced by Byott, Childs and Elder \cite{byottchildselder} as a unification of different works on the module structure of the ring of integers in totally ramified degree $p^n$ extensions of local fields. The utility of this theory is that the existence of a scaffold may imply the validity of some criteria on the module structure of the ring of integers in the extension. Here we outline the main results for the case of totally ramified degree $p$ extensions of $p$-adic fields.

Let $b\in\mathbb{Z}$ be coprime with $p$. Call $\mathbb{S}_p=\{0,1,\dots,p-1\}$ and let $\mathfrak{b}\colon\mathbb{S}_{p}\longrightarrow\mathbb{Z}$ be defined by $\mathfrak{b}(s)=bs$. Since $b$ is coprime with $p$, the map $r\circ(-\mathfrak{b})$ is bijective, where $r\colon\mathbb{Z}\longrightarrow\mathbb{S}_p$ sends each integer number to its remainder mod $p$. Let $\mathfrak{a}\colon\mathbb{S}_p\longrightarrow\mathbb{S}_p$ be its inverse. The definition of a scaffold for our case is as follows (see \cite[Definition 2.3]{byottchildselder} for the general definition):

\begin{defi}\label{def:scaffold} Let $L/K$ be a degree $p$ extension of $p$-adic fields. Let $A$ be a $K$-algebra of dimension $p$ acting $K$-linearly on $L$ and let $\mathfrak{c}\in\mathbb{Z}_{\geq1}$. An $A$-scaffold on $L$ with precision $\mathfrak{c}$ and shift parameter $b$ consists of the following data:
\begin{enumerate}[label=\arabic*)]
    \item\label{def:scaffold1} A family $\{\lambda_n\}_{n\in\mathbb{Z}}$ of elements in $L$ such that $v_L(\lambda_n)=n$ for all $n\in\mathbb{Z}$ and $\lambda_{n_1}\lambda_{n_2}^{-1}\in K$ if and only if $n_1\equiv n_2\,(\mathrm{mod}\,p)$.
    \item\label{def:scaffold2} An element $\Psi\in A$ with $\Psi\cdot1=0$ and the property that for every $n\in\mathbb{Z}$ there is some $u_n\in\mathcal{O}_K^*$ such that:
    $$\begin{cases}
    \Psi\cdot\lambda_n\equiv u_n\lambda_{n+b}\,(\mathrm{mod}\,\lambda_{n+b}\mathfrak{p}_L^{\mathfrak{c}}) &\hbox{ if }\mathfrak{a}(n)\geq1 \\
    \Psi\cdot\lambda_n\equiv0\,(\mathrm{mod}\,\lambda_{n+b}\mathfrak{p}_L^{\mathfrak{c}}) &\hbox{ if }\mathfrak{a}(n)=0 \\
    \end{cases}$$
\end{enumerate}
\end{defi}

\begin{rmk}\normalfont\label{rmk:raisesval} The $L$-valuation of the elements $\Psi\cdot\lambda_n$ can be determined from Definition \ref{def:scaffold} as follows. Suppose that $p\nmid n$. Since $v_L(u_n\lambda_{n+b})<v_L(\lambda_{n+b}\pi_L^{\mathfrak{c}})$, taking valuations at the first congruent in \ref{def:scaffold2} yields $$v_L(\Psi\cdot\lambda_n)=v_L(\lambda_{n+b})=n+b=v_L(\lambda_n)+b.$$ On the other hand, when $p\mid n$, $v_L(\Psi\cdot\lambda_n)\geq n+b+\mathfrak{c}>v_L(\lambda_n)+b$. Thus, we see that $\Psi$ raises valuations of the elements $\lambda_n$ by at least $b$, and exactly this value if and only if $p\nmid n$.
\end{rmk}

Let us denote $\mathfrak{A}_A=\{\gamma\in A\,|\,\gamma\cdot\mathcal{O}_L\subset\mathcal{O}_L\}$. If $A$ together with its action is a Hopf-Galois structure on $L/K$, we recover the usual notion of associated order.

\begin{pro}[\cite{byottchildselder}, Proposition 2.12]\label{pro:anormbasisscaffold} Let $L/K$ be a degree $p$ extension of $p$-adic fields with an $A$-scaffold of precision $\mathfrak{c}\in\mathbb{Z}_{\geq1}$. Let $a$ be an integer number such that $\mathfrak{a}(a)=p-1$. Then any element $\rho\in L$ with $v_L(\rho)=a$ is a free generator of $L$ as an $A$-module. Moreover, $\mathfrak{A}_A$ is an $\mathcal{O}_K$-order in $A$.
\end{pro}

Let $a\in\mathbb{S}_p$ be such that $\mathfrak{a}(a)=p-1$ and denote \begin{equation}\label{eq:paramscaffold}
    \begin{split}
        d(i)&=\Big\lfloor\frac{a+ib}{p}\Big\rfloor,\quad0\leq i\leq p-1,\\w(i)&=\mathrm{min}\{d(i+j)-d(j)\,|\,0\leq j\leq p-1-i\},\quad0\leq i\leq p-1.
    \end{split}
\end{equation}

The result \cite[Theorem 3.1]{byottchildselder} on the module structure of $\mathcal{O}_L$ as an $\mathfrak{A}_A$-module in terms of a scaffold from the $K$-algebra $A$ is specified to our situation as follows.

\begin{thm}\label{thm:criteriascaffold} Let $L/K$ be a degree $p$ extension of $p$-adic fields that admits a scaffold with shift parameter $b$ and precision $\mathfrak{c}$. Let $a\in\mathbb{S}_p$ be such that $\mathfrak{a}(a)=p-1$.
\begin{enumerate}[label=\arabic*)]
    \item\label{thm:criteriascaffold1} If $\mathfrak{c}\geq\mathrm{max}(a,1)$, $\{\pi_K^{-w(i)}\Psi^i\}_{i=0}^{p-1}$ is an $\mathcal{O}_K$-basis of $\mathcal{O}_L$. If in addition $w(i)=d(i)$ for all $0\leq i\leq p-1$, then $\mathcal{O}_L$ is $\mathfrak{A}_A$-free.
    \item\label{thm:criteriascaffold2} If $\mathfrak{c}\geq p+a$, $\mathcal{O}_L$ is $\mathfrak{A}_A$-free if and only if $w(i)=d(i)$ for all $0\leq i\leq p-1$.
\end{enumerate}
\end{thm}

The sufficient condition at \ref{thm:criteriascaffold1} is clearly implied by the one at \ref{thm:criteriascaffold2}. Therefore, the latter is commonly known as the weak condition, while the former is the strong one. The weak condition yields a sufficient condition for the freeness, while the strong one implies that such a condition is also necessary, i.e. a characterization. On the other hand, the weak condition is enough to obtain an $\mathcal{O}_K$-basis of $\mathfrak{A}_{L/K}$. 

\begin{example}\normalfont\label{ex:galoisscaffoldp} Byott, Childs and Elder \cite[Example 2.8]{byottchildselder} constructed a scaffold for a Galois degree $p$ extension $L/K$ of $p$-adic fields, ramified with ramification jump $t$, and $e=e(K/\mathbb{Q}_p)$. The shift parameter is $t$ and the precision is $\mathfrak{c}=pe-(p-1)t$. In this case, the integers $d(i)$, $w(i)$ are actually the numbers $\nu_i$, $n_i$ from Theorem \ref{thm:cyclicpfreeness} \ref{thm:cyclicpfreeness2}. The criterion $w(i)=d(i)$ for all $0\leq i\leq p-1$ is equivalent to $a\mid p-1$. In \cite[Example 3.3]{byottchildselder}, it is proved that the strong condition is implied by $t<\frac{pe}{p-1}-2$, obtaining a slightly weaker result than Theorem \ref{thm:cyclicpfreeness} \ref{thm:cyclicpfreeness3}.
\end{example}

\subsection{Continued fractions and distance to the nearest integer}\label{sec:contfrac}

We will need some notions and results on continued fractions. The reader can consult \cite[Section 2.4]{gildegpdihedral} and the references mentioned therein for further information.

For a non-negative integer $x$ and an odd prime number $p$, we write $$\frac{x}{p}=[a_0;a_1,\dots,a_n]=a_0+\cfrac{1}{a_1+\cfrac{1}{\ddots+\cfrac{1}{a_n}}}$$ for the continued fraction expansion of the rational number $\frac{x}{p}$. By definition, $a_0$ is just its integer part. If $a$ is the remainder of $x$ mod $p$, then $\frac{a}{p}=[0;a_1,\dots,a_n]$. 

For $i\geq0$, the rational number $$\frac{p_i}{q_i}=[a_0;a_1,\dots,a_i],$$ where the fraction is in irreducible form, is commonly known as the $i$-th convergent of $\frac{x}{p}$. For us, the numerators $p_i$ are almost completely irrelevant, while we will need the denominators $q_i$. For this reason, from now on we shall refer to $q_i$ itself as the \textbf{$i$-th convergent}. By definition, $q_0=1$, $q_1=a_1$ and $q_n=p$. Moreover, it is satisfied that $q_{i+2}=a_{i+2}q_{i+1}+q_i$ for every $i\geq2$, and similarly for the numbers $p_i$.

Now, let us denote $$q_{i,r_i}=r_iq_{i+1}+q_i,\quad0\leq r_i\leq a_{i+2},$$ and define integer numbers $p_{i,r_i}$ analogously. The fractions $\frac{p_{i,r_i}}{q_{i,r_i}}$ are usually called semiconvergents or intermediate convergents of $\frac{x}{p}$. Following our convention from the previous paragraph, we will reserve the term \textbf{semiconvergent} for the denominators $q_{i,r_i}$. It is immediate from the definition that $q_{i,0}=q_i$ and $q_{i,a_{i+2}}=q_{i+2}$, so the convergents are semiconvergents. If $i$ is even, we shall say that $q_{i,r_i}$ is an \textbf{even-indexed semiconvergent}.

For a real number $\alpha$, we write $\widehat{\alpha}$ for its fractional part, so that $\alpha=\lfloor\alpha\rfloor+\widehat{\alpha}$. On the other hand, if $2\alpha\notin\mathbb{Z}$, we write $||\alpha||$ for the distance of $\alpha$ to the nearest integer.

\begin{pro}\label{pro:contfrac} Let $\alpha=\frac{a}{p}$ with $0<a<p$ and write $\alpha=[a_0;a_1,\dots,a_n]$ for its continued fraction expansion. Let $\frac{p_i}{q_i}$ be its $i$-th convergent, $0\leq i\leq n$.
\begin{enumerate}[label=\arabic*)]
    \item\label{pro:contfrac1} $||q_{i+1}\frac{a}{p}||<||q_i\frac{a}{p}||$ for every $0\leq i\leq n-1$.
    \item\label{pro:contfrac2} If $q\in\mathbb{Z}$ and $0<q<q_i$, then $||q\frac{a}{p}||\geq||q_{i-1}\frac{a}{p}||$, and in particular, $||q\frac{a}{p}||>||q_i\frac{a}{p}||$.
    \item\label{pro:contfrac3} If $i>0$ is even (resp. odd), then $\widehat{q_i\frac{a}{p}}<\frac{1}{2}$ (resp. $\widehat{q_i\frac{a}{p}}>\frac{1}{2}$).
    \item\label{pro:contfrac4} $||q_{n-1}\frac{a}{p}||=\frac{1}{p}$, so $\widehat{q_{n-1}\frac{a}{p}}=\frac{1}{p}$ if $n$ is odd and $\widehat{q_{n-1}\frac{a}{p}}=\frac{p-1}{p}$ otherwise.
\end{enumerate}
\end{pro}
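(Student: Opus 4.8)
My plan is to reduce the whole statement to the standard analysis of the error terms $\theta_i:=q_i\frac{a}{p}-p_i$ for $0\le i\le n$. I would begin by recalling (or quickly re-deriving from the recurrences $p_i=a_ip_{i-1}+p_{i-2}$, $q_i=a_iq_{i-1}+q_{i-2}$ together with $p_iq_{i-1}-p_{i-1}q_i=(-1)^{i-1}$) the basic facts: $\theta_n=0$; $\theta_i$ has sign $(-1)^i$ for $0\le i\le n-1$; the sequence $(|\theta_i|)_{0\le i\le n}$ is strictly decreasing; and, since $|q_{i-1}\theta_i-q_i\theta_{i-1}|=|p_iq_{i-1}-p_{i-1}q_i|=1$ and consecutive $\theta$'s have opposite signs, $q_{i-1}|\theta_i|+q_i|\theta_{i-1}|=1$ for $1\le i\le n$. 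The last identity gives $|\theta_i|<\frac{1}{q_{i+1}}$ for $0\le i\le n-2$ and $|\theta_{n-1}|=\frac{1}{q_n}=\frac{1}{p}$ (using $p_n=a$, $q_n=p$). Since $p\ge3$ and $q_{i+1}\ge q_i+q_{i-1}\ge2$ for $i\ge1$, we obtain $|\theta_i|<\frac{1}{2}$ for $1\le i\le n-1$; hence, writing $q_i\frac{a}{p}=p_i+\theta_i$ with $p_i\in\mathbb{Z}$, we get $||q_i\frac{a}{p}||=|\theta_i|$, and $\widehat{q_i\frac{a}{p}}$ equals $\theta_i$ if $i$ is even and $1-|\theta_i|$ if $i$ is odd, for every $1\le i\le n-1$.

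With this dictionary, items (1), (3) and (4) are short. Item (3) is the sign dichotomy just recorded: $\widehat{q_i\frac{a}{p}}=\theta_i\in(0,\tfrac{1}{2})$ for even $i$, and $\widehat{q_i\frac{a}{p}}=1-|\theta_i|\in(\tfrac{1}{2},1)$ for odd $i$. Item (4) is the equality $\theta_{n-1}=(-1)^{n-1}/p$, which yields $||q_{n-1}\frac{a}{p}||=\tfrac{1}{p}$ at once and, on reading the sign, $\widehat{q_{n-1}\frac{a}{p}}=\tfrac{1}{p}$ or $\tfrac{p-1}{p}$ according to the parity of $n$. For item (1) with $1\le i\le n-1$ we have directly $||q_{i+1}\frac{a}{p}||=|\theta_{i+1}|<|\theta_i|=||q_i\frac{a}{p}||$; the case $i=0$ I would treat by hand, using $q_0=1$ and $\frac{a}{p}\le\frac{1}{a_1}$ (and noting that when $a_1=1$ one has $q_0=q_1$, a degenerate configuration requiring separate attention).

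Item (2) is the substantial one: it is the classical best-approximation property of convergent denominators. I would prove the sharper statement that for every integer $q$ with $1\le q<q_{j+1}$ and every $m\in\mathbb{Z}$ one has $|q\frac{a}{p}-m|\ge|\theta_j|$, and then apply it with $j=i-1$; combined with item (1) at index $i-1$ this gives both assertions of (2). For the sharper statement, since $p_jq_{j+1}-p_{j+1}q_j=\pm1$ the vectors $\binom{p_j}{q_j}$ and $\binom{p_{j+1}}{q_{j+1}}$ form a $\mathbb{Z}$-basis of $\mathbb{Z}^2$, so $\binom{m}{q}=u\binom{p_j}{q_j}+v\binom{p_{j+1}}{q_{j+1}}$ for unique $u,v\in\mathbb{Z}$. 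From $q=uq_j+vq_{j+1}$ and $1\le q<q_{j+1}$ one checks that either $v=0$ and $u\ge1$, or $u$ and $v$ are nonzero of opposite signs (if $v\ge1$ then $uq_j=q-vq_{j+1}<0$; if $v\le-1$ then $uq_j=q-vq_{j+1}>0$). Since $\theta_j$ and $\theta_{j+1}$ have opposite signs (or $\theta_{j+1}=0$ when $j=n-1$), in each case the two terms of $q\frac{a}{p}-m=u\theta_j+v\theta_{j+1}$ have the same sign, so $|q\frac{a}{p}-m|=|u|\,|\theta_j|+|v|\,|\theta_{j+1}|\ge|\theta_j|$ because $|u|\ge1$. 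Minimizing over $m$ gives $||q\frac{a}{p}||\ge|\theta_j|=||q_j\frac{a}{p}||$ when $1\le j\le n-1$, and this also holds for $j=0$ since $||\frac{a}{p}||\le\frac{a}{p}=|\theta_0|$.

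The only genuinely delicate point is making item (2) fully rigorous: one must track the signs of $u$, $v$, $\theta_j$, $\theta_{j+1}$ in all the boundary cases — $j=0$, $j=n-1$, and the degenerate $a_1=1$ — but none of this goes beyond routine continued-fraction bookkeeping.
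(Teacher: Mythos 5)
The paper does not actually prove this proposition; it only cites \cite[Proposition 2.7]{gildegpdihedral} (ultimately Ferton's analysis), so there is no internal argument to compare yours against. Your route is the standard self-contained one: the error terms $\theta_i=q_i\frac{a}{p}-p_i$, their alternating signs and strict decrease (note the last step of strictness uses the convention $a_n\geq2$, which the paper also assumes elsewhere), the identity $q_{i-1}|\theta_i|+q_i|\theta_{i-1}|=1$ giving $|\theta_{n-1}|=\frac{1}{p}$, the bound $|\theta_i|<\frac{1}{2}$ for $1\leq i\leq n-1$ so that $||q_i\frac{a}{p}||=|\theta_i|$ in that range, and the unimodular-basis argument for the best-approximation property in \eqref{pro:contfrac2}. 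This is sound, and your treatment of \eqref{pro:contfrac2}, \eqref{pro:contfrac4}, and of \eqref{pro:contfrac1}, \eqref{pro:contfrac3} in the range $1\leq i\leq n-1$ is complete (in \eqref{pro:contfrac2} one must read $q\geq1$, which your lattice argument implicitly assumes).

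The one step you defer cannot be completed, because in the ``degenerate configuration'' you flag the literal statement is false rather than merely delicate: if $a_1=1$ (equivalently $\frac{a}{p}>\frac{1}{2}$) then $q_1=q_0=1$, so $||q_1\frac{a}{p}||=||q_0\frac{a}{p}||$ and the strict inequality in \eqref{pro:contfrac1} fails at $i=0$; for instance $\frac{3}{5}=[0;1,1,2]$, or the paper's own example $\frac{8}{13}=[0;1,1,1,1,2]$ coming from $\ell=60$, $p=13$. Likewise \eqref{pro:contfrac3} as written allows $i=n$, where $\widehat{q_n\frac{a}{p}}=0$, contradicting the claim when $n$ is odd. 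These are boundary imprecisions of the statement itself (every use made of it in the paper involves only indices $1\leq i\leq n-1$, together with \eqref{pro:contfrac4}), so the correct resolution is not a separate hand computation but an explicit remark: \eqref{pro:contfrac1} holds for $1\leq i\leq n-1$, and at $i=0$ exactly when $a_1\geq2$ (where $\frac{a}{p}<\frac{1}{2}$ gives $||q_1\frac{a}{p}||=|\theta_1|<|\theta_0|=||\frac{a}{p}||$), while \eqref{pro:contfrac3} holds for $0<i<n$. With that caveat stated, your proof establishes everything that is true in the proposition and everything the paper needs from it.
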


A proof is given at \cite[Proposition 2.7]{gildegpdihedral}.

\section{Degree $p$ extensions of $p$-adic fields}\label{sec:redtotram}

Let $L/K$ be a degree $p$ extension of $p$-adic fields. Let $\widetilde{L}$ be the normal closure of $L/K$ and let $G=\mathrm{Gal}(\widetilde{L}/K)$. Write $\{G_i\}_{i=-1}^{\infty}$ for the chain of ramification groups of $\widetilde{L}/K$. If $\widetilde{L}/K$ is unramified (in which case necessarily $\widetilde{L}=L$ because a non-Galois extension must be ramified), the ramification jump of $\widetilde{L}/K$ is $t=-1$.

Otherwise, we have by \cite[Chapter IV, Corollary 3]{serre} that $G_1\cong C_p$, the cyclic group with $p$ elements. In that case, the largest ramification jump of $E/K$ is the positive integer $t$ such that $G_t\cong C_p$ and $G_{t+1}$ is trivial. Note that there might be a ramification jump at $0$ depending on whether $\widetilde{L}/L$ is unramified or not, but this is not relevant for our purposes. Thus, we will refer to $t$ as above as the ramification jump of $\widetilde{L}/K$. By \cite[Chapter IV, Proposition 2]{serre}, $t$ is also the ramification jump of $\widetilde{L}/M$. 

Following the definition at Section \ref{sec:prelimpadic}, the Herbrand function of $L/K$ is $\varphi_{L/K}=\varphi_{\widetilde{L}/K}\varphi_{\widetilde{L}/L}$. A straightforward calculation yields $$\ell\coloneqq\varphi_{L/K}(t)=\frac{t}{e(\widetilde{L}:L)},$$ which is the ramification jump of $L/K$ (see \cite[Section 2.2.6]{elder2018}). This invariant is labeled as the phantom ramification number of $L/K$ in \cite[p. 58]{greither1992}. Note that if $\widetilde{L}/K$ is totally ramified, we have that $\ell=\frac{t}{r}$. Let $e$ be the ramification index of $K$ over $\mathbb{Q}_p$. From the proof of \cite[Proposition 2]{berge1978}, it is easy to see that $1\leq t\leq\frac{rpe}{p-1}$. Equivalently, $$0<\ell\leq\frac{pe}{p-1}.$$ In analogy with the Galois case, we use the following terminology:

\begin{defi} Let $L/K$ be a degree $p$ extension of $p$-adic fields with ramification jump $\ell$.
\begin{enumerate}
    \item We say that $L/K$ is \textbf{maximally ramified} if $\ell=\frac{pe}{p-1}$.
    \item We say that $L/K$ is \textbf{almost maximally ramified} if $\ell\geq\frac{pe}{p-1}-1$.
\end{enumerate}
\end{defi}

Under this terminology, the terms for the behavior of $L/K$ described in Theorem \ref{maintheorem} \ref{mainthm3} and \ref{mainthm4} fit with those for the corresponding particular cases when $L/K$ is Galois.

Since the residue field of $K$ is finite, the Galois group $G\coloneqq\mathrm{Gal}(\widetilde{L}/K)$ is solvable \cite[Chapter IV, Section 2, Corollary 5]{serre}. From \cite[(7.5)]{childs}, we obtain that $L/K$ is Hopf-Galois. By \cite[Proposition 4.a)]{childs1989}, $L/K$ admits actually a unique Hopf-Galois structure $H$. In addition, by a theorem of Galois, $G$ is solvable if and only if it is a Frobenius group of degree $p$, that is, $G\cong C_p\rtimes C_r$ with $r\mid p-1$ (see for example \cite[Theorem 1]{ben-shimol}). Fix a presentation \begin{equation}\label{eq:presentG}G=\langle\sigma,\tau\,|\,\sigma^p=\tau^r=1_G,\,\tau\sigma=\sigma^g\tau\rangle,
\end{equation} where $g\in\mathbb{Z}$ reduces mod $p$ to an order $r$ element of $(\mathbb{Z}/p\mathbb{Z})^{\times}$. Calling $J=\langle\sigma\rangle$ and $G'=\langle\tau\rangle$, we have that $G=J\rtimes G'$ with $J\cong C_p$ and $G'\cong C_r$. Since $J$ is a normal subgroup of $G$, it is the only order $p$ subgroup of $G$. Hence $M\coloneqq\widetilde{L}^J$ is the only intermediate field of $\widetilde{L}/K$ with degree $r$ over $K$, and $J=\mathrm{Gal}(\widetilde{L}/M)$. Let us choose the element $\tau$ at \eqref{eq:presentG} such that $L=\widetilde{L}^{G'}$. Then, $L/K$ is almost classically Galois with complement $M$.

\subsection{Reduction to the totally ramified case}

We shall show that in order to prove Theorem \ref{maintheorem} we can assume without loss of generality that the normal closure $\widetilde{L}$ of a degree $p$ extension of $p$-adic fields $L/K$ is totally ramified. In order to do so, we will replace $L/K$ by another degree $p$ extension of $p$-adic fields whose ring of integers follows the same criteria for freeness over the associated order and whose normal closure is totally ramified. Namely, such an extension is $L'/K'$, where $L'$ (resp. $K'$) is the inertia field of $\widetilde{L}/L$ (resp. $\widetilde{L}/K$). 


Note that by definition of inertia field, $L'/L$ (resp. $K'/K$) is the largest unramified subextension of $\widetilde{L}/L$ (resp. $\widetilde{L}/K$), and therefore $\widetilde{L}/L'$ (resp. $\widetilde{L}/K'$) is totally ramified. The idea behind the fact that the criteria for the freeness does not vary from $L/K$ to $L'/K'$ is that we can \textit{ignore} the unramified subextensions. This is the content of the following technical lemma.

\begin{lema}\label{lem:tensor} Let $L/K$ be an $H$-Galois extension of $p$-adic fields. Let $K'/K$ be another extension which is arithmetically disjoint with $L/K$ (i.e, the discriminants of $L/K$ and $K'/K$ are coprime) and let $L'=LK'$. Call $H'=K'\otimes_KH$, which is a Hopf-Galois structure on $L'/K'$. Then $\mathfrak{A}_{H'}=\mathfrak{A}_H\otimes_{\mathcal{O}_K}\mathcal{O}_{K'}$ and $\mathcal{O}_{L'}$ is $\mathfrak{A}_{H'}$-free if and only if $\mathcal{O}_L$ is $\mathfrak{A}_H$-free.
\end{lema}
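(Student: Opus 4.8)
The plan is to exploit arithmetic disjointness to reduce everything to a base change that behaves well on rings of integers, and then track the associated order through that base change. First I would recall that since the discriminants of $L/K$ and $K'/K$ are coprime, the extensions are linearly disjoint and, more importantly, $\mathcal{O}_{L'}=\mathcal{O}_L\otimes_{\mathcal{O}_K}\mathcal{O}_{K'}$ as $\mathcal{O}_{K'}$-algebras; this is the standard consequence of arithmetic disjointness (one can cite Fröhlich--Taylor or argue via the tower formula for discriminants, noting $\mathrm{disc}(L'/K')$ divides a product of the two coprime discriminants pushed up to $K'$). Similarly $H'=H\otimes_KK'$ is a Hopf-Galois structure on $L'/K'$ by the general tensor-compatibility for linearly disjoint extensions recalled in the Preliminaries, and its action on $L'$ is the base change of the action of $H$ on $L$.

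Next I would compute $\mathfrak{A}_{H'}$. The containment $\mathfrak{A}_H\otimes_{\mathcal{O}_K}\mathcal{O}_{K'}\subseteq\mathfrak{A}_{H'}$ is immediate: if $h\cdot\mathcal{O}_L\subseteq\mathcal{O}_L$ then $(h\otimes1)$ sends $\mathcal{O}_L\otimes_{\mathcal{O}_K}\mathcal{O}_{K'}=\mathcal{O}_{L'}$ into itself. For the reverse inclusion I would argue locally/by faithful flatness: an element $h\in H$ lies in $\mathfrak{A}_{H'}$ after base change iff $h\cdot\mathcal{O}_L\subseteq\mathcal{O}_L$, because $\mathcal{O}_{K'}$ is a faithfully flat $\mathcal{O}_K$-algebra, so $h\cdot\mathcal{O}_L\subseteq\mathcal{O}_L$ can be detected after tensoring with $\mathcal{O}_{K'}$ (the quotient $(h\cdot\mathcal{O}_L+\mathcal{O}_L)/\mathcal{O}_L$ inside $L$ is a finitely generated $\mathcal{O}_K$-module that vanishes iff its base change to $\mathcal{O}_{K'}$ vanishes). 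Combining with the fact that $\mathfrak{A}_{H'}$ is an $\mathcal{O}_{K'}$-module inside $H'=H\otimes_KK'$ spanned by elements of $H$, one gets equality.

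Finally I would deduce the freeness equivalence. If $\mathcal{O}_L=\mathfrak{A}_H\cdot\theta$ for some $\theta$, then base-changing gives $\mathcal{O}_{L'}=\mathcal{O}_L\otimes_{\mathcal{O}_K}\mathcal{O}_{K'}=(\mathfrak{A}_H\otimes_{\mathcal{O}_K}\mathcal{O}_{K'})\cdot(\theta\otimes1)=\mathfrak{A}_{H'}\cdot(\theta\otimes1)$, so $\mathcal{O}_{L'}$ is $\mathfrak{A}_{H'}$-free. For the converse I would use that $\mathcal{O}_L$ is always locally free of rank one over $\mathfrak{A}_H$ when the latter is the associated order over a complete DVR -- more precisely, $\mathcal{O}_L$ is $\mathfrak{A}_H$-free iff it is a projective $\mathfrak{A}_H$-module iff $\mathcal{O}_L\cong\mathfrak{A}_H$ as modules, and projectivity (equivalently, the vanishing of the relevant $\mathrm{Ext}$ or the triviality of the class in the locally free class group, which is trivial here since $\mathcal{O}_K$ is local) can be checked after the faithfully flat base change $\mathcal{O}_K\to\mathcal{O}_{K'}$: if $\mathcal{O}_{L'}$ is $\mathfrak{A}_{H'}$-free then $\mathcal{O}_L\otimes_{\mathcal{O}_K}\mathcal{O}_{K'}$ is $\mathfrak{A}_H\otimes_{\mathcal{O}_K}\mathcal{O}_{K'}$-free, and faithful flatness of $\mathcal{O}_{K'}/\mathcal{O}_K$ descends freeness of a finitely presented module. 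The main obstacle I anticipate is making the identification $\mathcal{O}_{L'}=\mathcal{O}_L\otimes_{\mathcal{O}_K}\mathcal{O}_{K'}$ fully rigorous from the coprimality of discriminants and then being careful that the faithfully flat descent argument applies to modules over the (possibly noncommutative) order $\mathfrak{A}_H$ rather than over $\mathcal{O}_K$; both are standard but deserve a precise reference or a short lemma.
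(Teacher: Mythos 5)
Your overall route is the paper's: use arithmetic disjointness to get $\mathcal{O}_{L'}=\mathcal{O}_L\otimes_{\mathcal{O}_K}\mathcal{O}_{K'}$, compute $\mathfrak{A}_{H'}$, then transfer freeness by base change and faithfully flat descent. However, there is a genuine gap in your proof of the inclusion $\mathfrak{A}_{H'}\subseteq\mathfrak{A}_H\otimes_{\mathcal{O}_K}\mathcal{O}_{K'}$. Your faithful-flatness detection argument only applies to elements of $H'$ of the form $h\otimes1$ with $h\in H$: it shows $H\cap\mathfrak{A}_{H'}=\mathfrak{A}_H$, nothing more. The bridge you then invoke --- ``$\mathfrak{A}_{H'}$ is an $\mathcal{O}_{K'}$-module inside $H\otimes_KK'$ spanned by elements of $H$'' --- is not a fact you have established; for it to be of any use the spanning elements must themselves lie in $\mathfrak{A}_{H'}$, and that claim is essentially equivalent to the inclusion you are trying to prove (a priori an $\mathcal{O}_{K'}$-submodule of $H\otimes_KK'$ need not be generated by the elements of $H$ it contains). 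The missing step is supplied by a componentwise argument: fix an $\mathcal{O}_K$-basis $\{z_j\}_{j=1}^s$ of $\mathcal{O}_{K'}$ and write $h\in\mathfrak{A}_{H'}$ as $h=\sum_j h_jz_j$ with $h_j\in H$ uniquely determined; since $\{z_j\}$ is also an $\mathcal{O}_L$-basis of $\mathcal{O}_{L'}=\mathcal{O}_L\otimes_{\mathcal{O}_K}\mathcal{O}_{K'}$, the relation $h\cdot\gamma=\sum_j(h_j\cdot\gamma)z_j\in\mathcal{O}_{L'}$ for $\gamma\in\mathcal{O}_L$ forces $h_j\cdot\gamma\in\mathcal{O}_L$ for every $j$, hence $h_j\in\mathfrak{A}_H$ and $h\in\mathfrak{A}_H\otimes_{\mathcal{O}_K}\mathcal{O}_{K'}$. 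This is exactly how the paper argues, and with it your faithful-flatness step becomes unnecessary for this part.

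On the freeness equivalence: the forward direction is fine, but the slogan ``faithful flatness of $\mathcal{O}_{K'}/\mathcal{O}_K$ descends freeness of a finitely presented module'' is not a valid general principle for modules over the order $\mathfrak{A}_H$ (flatness and projectivity descend; freeness in general does not). What saves the argument --- and what your parenthetical remarks gesture at --- is that $\mathcal{O}_{K'}$ is finite and free over the complete local ring $\mathcal{O}_K$: restricting an isomorphism $\mathcal{O}_{L'}\cong\mathfrak{A}_{H'}$ to $\mathfrak{A}_H$ gives $\mathcal{O}_L^{\,n}\cong\mathfrak{A}_H^{\,n}$ with $n=[K':K]$, and Krull--Schmidt for finitely generated modules over an order over a complete discrete valuation ring (equivalently, a Noether--Deuring type argument, or your projectivity-plus-rank-one route) yields $\mathcal{O}_L\cong\mathfrak{A}_H$. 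The paper is similarly brief at this point, so this is a matter of stating the correct descent input rather than a flaw in the approach; the real defect to repair is the one in the previous paragraph.
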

\begin{proof}
Since $L/K$ and $K'/K$ are arithmetically disjoint, $\mathcal{O}_{L'}=\mathcal{O}_L\otimes_{\mathcal{O}_K}\mathcal{O}_{K'}$ (see \cite[Chapter III, (2.13)]{frohlichtaylor}).

It is clear that $\mathfrak{A}_H\otimes_{\mathcal{O}_K}\mathcal{O}_{K'}\subseteq H\otimes_KK'=H'$. Since $\mathfrak{A}_H\otimes_{\mathcal{O}_K}\mathcal{O}_{K'}$ acts $\mathcal{O}_K$-linearly on $\mathcal{O}_{L'}=\mathcal{O}_L\otimes_{\mathcal{O}_K}\mathcal{O}_{K'}$ componentwise, $\mathfrak{A}_H\otimes_{\mathcal{O}_K}\mathcal{O}_{K'}\subseteq\mathfrak{A}_{H'}$. Conversely, let $h\in\mathfrak{A}_{H'}$, so that $h\in H'=H\otimes_KK'$. Let $\{z_j\}_{j=1}^s$ be an $\mathcal{O}_K$-basis of $\mathcal{O}_{K'}$. Then, there are unique elements $h_1,\dots,h_s\in H$ such that $$h=\sum_{j=1}^sh_jz_j.$$ It is enough to prove that $h_j\in\mathfrak{A}_H$ for all $1\leq j\leq s$. Indeed, given $\gamma\in\mathcal{O}_L\subseteq\mathcal{O}_{L'}$, we have that $h\cdot\gamma\in\mathcal{O}_L'$. Now, $$h\cdot\gamma=\sum_{j=1}^s(h_j\cdot\gamma)z_j.$$ Since $\mathcal{O}_L$ is $\mathcal{O}_K$-flat, $\{z_j\}_{j=1}^s$ is an $\mathcal{O}_L$-basis of $\mathcal{O}_{L'}$. Therefore, it follows that $h_j\cdot\gamma\in\mathcal{O}_L$ for all $1\leq j\leq s$, proving that $h_j\in\mathfrak{A}_H$ as desired.

As for the freeness, first assume that $L/K$ is unramified. Then $\mathcal{O}_L$ is $\mathfrak{A}_H$-free, and using that $\mathcal{O}_{L'}=\mathcal{O}_L\otimes_{\mathcal{O}_K}\mathcal{O}_{K'}$ we obtain that $\mathcal{O}_{L'}$ is $\mathfrak{A}_{H'}$-free. Otherwise, since $L/K$ and $K'/K$ are arithmetically disjoint, necessarily $K'/K$ is unramified. Hence $\pi_K\mathcal{O}_{K'}=\pi_{K'}\mathcal{O}_{K'}\neq\mathcal{O}_{K'}$, and then $\mathcal{O}_{K'}$ is $\mathcal{O}_K$-faithfully flat. Therefore, $\mathcal{O}_L$ is $\mathfrak{A}_H$-free if and only if $\mathcal{O}_L\otimes_{\mathcal{O}_K}\mathcal{O}_{K'}$ is $\mathfrak{A}_H\otimes_{\mathcal{O}_K}\mathcal{O}_{K'}$-free, that is, $\mathcal{O}_{L'}$ is $\mathfrak{A}_{H'}$-free.
\end{proof}

\begin{rmk}\normalfont Lemma \ref{lem:tensor} is a direct generalization of \cite[Proposition 5.18 and Corollary 5.19]{gilrioinduced}, that are written in the context that the compositum $L'/K$ is a Galois extension with Galois group isomorphic to a semidirect product (so that it admits induced Hopf-Galois structures). This is completely irrelevant for the description of the associated order in $H'$, and hence the proof is the same. As for the case of the freeness, our proof using the faithful flatness of $\mathcal{O}_{K'}$ is also valid in that case.
\end{rmk}

We are ready to validate the claim of this section.

\begin{pro}\label{pro:redtotram} Let $L/K$ be a totally ramified degree $p$ extension of $p$-adic fields. Let $\widetilde{L}$ be the normal closure of $L/K$ and let $M$ be the Galois complement of $L/K$. Let $L'$ (resp. $K'$) be the inertia field of $\widetilde{L}/L$ (resp. $\widetilde{L}/K$). Then: 
\begin{enumerate}[label=\arabic*)]
    \item\label{pro:redtotram1} $L'/K'$ is a degree $p$ extension of $p$-adic fields with normal closure $\widetilde{L}$ (so it is Hopf-Galois).
    \item\label{pro:redtotram2} $\widetilde{L}/K'$ is totally ramified.
    \item\label{pro:redtotram3} $L'/K'$ is almost classically Galois with complement $M$.
    \item\label{pro:redtotram4} $H'\coloneqq H\otimes_KK'$ is the only Hopf-Galois structure on $L'/K'$.
    \item\label{pro:redtotram5} $\mathfrak{A}_{H'}=\mathcal{O}_{K'}\otimes_{\mathcal{O}_K}\mathfrak{A}_H$ and $\mathcal{O}_{L'}$ is $\mathfrak{A}_{H'}$-free if and only if $\mathcal{O}_L$ is $\mathfrak{A}_H$-free.
\end{enumerate}
\end{pro}
\begin{proof}
Let us call $G=\mathrm{Gal}(\widetilde{L}/K)$, $G'=\mathrm{Gal}(\widetilde{L}/L)$ and $J=\mathrm{Gal}(\widetilde{L}/M)$, so that $G=J\rtimes G'$.

Since $L\cap M=K$ and $K'\subseteq M$, we have $L\cap K'=K$. Moreover, since $K'/K$ is unramified, it is Galois. Therefore we can apply \cite[Theorem 5.5]{cohn1991}, obtaining that $L$ and $K'$ are $K$-linearly disjoint.

\begin{enumerate}
    \item It is trivial that the normal closure of $L'/K'$ is $\widetilde{L}$. Let us prove that $[L':K']=p$. We write $G_0$ (resp. $G_0'$) for the inertia group of $G$ (resp. $G'$), so that $L'=\widetilde{L}^{G_0'}$ and $K'=\widetilde{L}^{G_0}$. Since $G_0'\subseteq G_0$, we have $K'\subseteq L'$. Applying \cite[Chapter IV, Proposition 2]{serre}, we obtain that $G_0'=G_0\cap G'$. Then, $$LK'=\widetilde{L}^{G'}\widetilde{L}^{G_0}=\widetilde{L}^{G'\cap G_0}=\widetilde{L}^{G_0'}=L'.$$ Since $L/K$ and $K'/K$ are linearly disjoint, $[L':K']=[L:K]=p$, as we wanted.

\[
\xymatrix{
&& \ar@{-}@/_2pc/[ddll]_{G'} \ar@{-}[dl]_{G_0'} \widetilde{L} \ar@{-}[dd]^{G_0} \ar@{-}[dr]^J & \\
& \ar@{-}[dl] L' \ar@{-}[dr]_{H'} & & \ar@{-}[dl] M \\
L \ar@{-}[dr]_H & & \ar@{-}[dl] K' & \\
& K & &
}
\]

    \item Since the extension $L/K$ is totally ramified and linearly disjoint with $K'/K$, $L'/K'$ is totally ramified. On the other hand, by definition of inertia field, $\widetilde{L}/L'$ is totally ramified. Therefore, $\widetilde{L}/K'$ is totally ramified.
    
    \item Arguing as in \ref{pro:redtotram2}, since $L/K$ is linearly disjoint with $M/K$, $\widetilde{L}/M$ is totally ramified. It follows that $K'\subseteq M$. Moreover, since $M/K$ is Galois, $M/K'$ also is. Then the fundamental theorem of Galois theory applied to the extension $\widetilde{L}/K$ gives that $J$ is a normal subgroup of $G_0$. Since $J\cap G'=\{1_G\}$ and $G_0'\subseteq G'$, we have $J\cap G_0'=\{1_G\}$. Hence, the product of $J$ and $G_0'$ within $G_0$ is semidirect. Now, since $L/K$ is totally ramified, $$|J\rtimes G_0'|=|J||G_0'|=[L:K]e(\widetilde{L}/L)=e(\widetilde{L}/K)=|G_0|.$$ It follows that $G_0=J\rtimes G_0'$. Hence the extension $L'/K'$ is almost classically Galois with complement $M$.

    \item Since $L'/K'$ has degree $p$, it posseses a unique Hopf-Galois structure $H'$. On the other hand, since $H$ is a Hopf-Galois structure on $L/K$, $H\otimes_KK'$ is a Hopf-Galois structure on $L'/K'$. Necessarily, $H'=H\otimes_KK'$.

    \item We already know that $L$ and $K'$ are $K$-linearly disjoint and $LK'=L'$. Since $L/K$ is totally ramified and $K'/K$ is unramified, they are $K$-arithmetically disjoint. Then, the result follows from applying Lemma \ref{lem:tensor}.
\end{enumerate}

\end{proof}

\begin{coro} Let $L/K$ be a degree $p$ extension of $p$-adic fields, let $H$ be its only Hopf-Galois structure and let $\widetilde{L}$ be its normal closure.
\begin{enumerate}[label=\arabic*)]
    \item If $\widetilde{L}/L$ is unramified, then the $\mathfrak{A}_{L/K}$-freeness of $\mathcal{O}_L$ follows the same criteria as in the Galois case \cite{bertrandiasbertrandiasferton,bertrandiasferton}.
    \item If $\widetilde{L}/L$ has ramification index equal to $2$, then the $\mathfrak{A}_{L/K}$-freeness of $\mathcal{O}_L$ follows the same criteria as in the dihedral case \cite{gildegpdihedral}.
\end{enumerate}
\end{coro}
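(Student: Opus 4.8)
The plan is to obtain the corollary as an immediate consequence of the previous proposition, which reduces the freeness question to the totally ramified case, together with the already-known solutions in the cyclic (Theorem~\ref{thm:cyclicpfreeness}) and dihedral (\cite{gildegpdihedral}) settings.

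First I would dispose of the unramified case: if $L/K$ is unramified it is cyclic of degree $p$ with $\widetilde{L}=L$, and $\mathcal{O}_L$ is $\mathfrak{A}_{L/K}$-free, as recalled at the beginning of this section; this matches the first bullet. So assume from now on that $L/K$ is ramified, hence totally ramified, and let $L'$ (resp.\ $K'$) be the inertia field of $\widetilde{L}/L$ (resp.\ $\widetilde{L}/K$). By parts~\eqref{pro:redtotram1} and \eqref{pro:redtotram2} of the previous proposition, $L'/K'$ is a degree $p$ extension whose normal closure $\widetilde{L}$ is totally ramified over $K'$, and by part~\eqref{pro:redtotram5}, $\mathcal{O}_L$ is $\mathfrak{A}_{L/K}$-free if and only if $\mathcal{O}_{L'}$ is $\mathfrak{A}_{L'/K'}$-free. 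Hence the whole problem reduces to identifying $\mathrm{Gal}(\widetilde{L}/K')$ and invoking the appropriate known result.

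To identify that group I would reuse the data assembled in the proof of the proposition. With $G=J\rtimes G'$, $J=\mathrm{Gal}(\widetilde{L}/M)\cong C_p$, $G'=\mathrm{Gal}(\widetilde{L}/L)$, and $G_0,G_0'$ the respective inertia subgroups, that proof gives $\mathrm{Gal}(\widetilde{L}/K')=G_0=J\rtimes G_0'$ with $|G_0'|=[\widetilde{L}:L']=e(\widetilde{L}/L)$. Consequently, if $\widetilde{L}/L$ is unramified then $G_0'=\{1_G\}$, so $L'=\widetilde{L}$ and $L'/K'$ is a totally ramified cyclic degree $p$ extension, to which Theorem~\ref{thm:cyclicpfreeness} applies directly. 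If instead $e(\widetilde{L}/L)=2$, then $G_0'\cong C_2$, so $\mathrm{Gal}(\widetilde{L}/K')=J\rtimes C_2$ is the dihedral group of order $2p$ and $L'/K'$ is a totally ramified degree $p$ extension with totally ramified dihedral normal closure, to which \cite[Theorem 1.2]{gildegpdihedral} applies. In each case I would then transport the conclusion back to $L/K$ through the equivalence of part~\eqref{pro:redtotram5}.

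I do not expect a genuine obstacle: once the proposition is in hand the argument is pure bookkeeping. The single point I would check with care is that the arithmetic invariants feeding the cited criteria for $L'/K'$ --- the ramification index of $K'/\mathbb{Q}_p$ and the ramification jump of $\widetilde{L}/K'$ --- agree with those attached to $L/K$ and $\widetilde{L}/K$. This is immediate: $K'/K$ is unramified, so the two ramification indices coincide; and since $G_1=J\subseteq G_0$, the lower ramification filtration of a subgroup, $(G_0)_i=G_0\cap G_i$ (\cite[Chapter IV, Proposition 2]{serre}), shows that the ramification jump of $\widetilde{L}/K'$ equals that of $\widetilde{L}/K$.
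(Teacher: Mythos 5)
Your argument is correct and is exactly the deduction the paper intends: the corollary is stated without proof as an immediate consequence of the preceding proposition (parts \eqref{pro:redtotram1}, \eqref{pro:redtotram2}, \eqref{pro:redtotram5}), with $\mathrm{Gal}(\widetilde{L}/K')=J\rtimes G_0'$ cyclic of order $p$ or dihedral of order $2p$ according to $e(\widetilde{L}/L)\in\{1,2\}$, and the invariants $e$ and $t$ unchanged since $K'/K$ is unramified. Your extra check that the ramification data transfer correctly via \cite[Chapter IV, Proposition 2]{serre} is precisely the point that makes the reduction legitimate, so nothing is missing.
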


\section{The maximally ramified case}\label{sec:maxram}

From now on, we assume that $\widetilde{L}/K$ is totally ramified, as by Proposition \ref{pro:redtotram}, there is no loss of generality in doing so. Write $t$ for the ramification jump of $\widetilde{L}/K$. Then, the ramification jump of $L/K$ is $\ell=\frac{t}{r}$. Let $a$ be the residue class of $\ell$ as a $p$-adic integer.

In this section, we will prove Theorem \ref{maintheorem} \ref{mainthm1}. Let us assume that that $a=0$. Since $\ell=\frac{t}{r}$, the remainder of $t$ mod $p$ also vanishes, so by \cite[Chapter III, \textsection2, Proposition 2.3]{fesenkovoskotov}, we have $t=\frac{rpe}{p-1}$. Then $\ell=\frac{pe}{p-1}$ and $L/K$ is maximally ramified.

The author already proved that, in this situation, $\mathcal{O}_L$ is $\mathfrak{A}_{L/K}$-free \cite[Proposition 7.10]{gilkummer}. Actually, by \cite[Corollary 2.5.6]{trumanthesis}, this is implied by $\mathfrak{A}_{L/K}$ being the maximal $\mathcal{O}_K$-order in $H$. Thus, for our purposes it is enough to establish the maximality of $\mathfrak{A}_{L/K}$.

Let $M$ be as in Section \ref{sec:redtotram}, so that $\widetilde{L}/M$ is a cyclic degree $p$ extension. Recall that the ramification jump $t$ of $\widetilde{L}/K$ is also the ramification jump of $\widetilde{L}/M$. Hence, the extension $\widetilde{L}/M$ satisfies the conditions of Theorem \ref{thm:cyclicpfreeness} \ref{thm:cyclicpfreeness1}, whose proof relies on some nice properties satisfied by the extension. We shall adapt these properties to the extension $L/K$ in order to validate Theorem \ref{maintheorem} \ref{mainthm1}. In the end, the proof will be analogous to the one for the dihedral case \cite[Section 5]{gildegpdihedral}.

By \cite[Chapter III, Section 2, Proposition 2.3]{fesenkovoskotov}, we have that $\zeta_p\in M$ and $\widetilde{L}=M(\gamma)$ for some $\gamma\in\mathcal{O}_{\widetilde{L}}$ with $v_{\widetilde{L}}(\gamma)=1$ and $\gamma^p\in\mathcal{O}_M$. In particular, $\widetilde{L}/M$ is a Kummer extension with Kummer generator $\gamma$. Then, $\sigma(\gamma)=\zeta_p\gamma$ for some choice of primitive $p$-th root of unity $\zeta_p$, whence \begin{equation}\label{eq:galoiseig}
    \sigma^i(\gamma^j)=\zeta_p^{ij}\gamma^j
\end{equation} for every $0\leq i\leq p-1$. We see that the elements $\gamma^j$ are eigenvectors for the Galois action of $\widetilde{L}/M$. Moreover, they form a generating system for $\widetilde{L}/M$. In \cite[Proposition 4.5]{gilkummer}, it is proved that this is actually a characterization for the notion of Kummer extension.


Let $\alpha=N_{\widetilde{L}/L}(\gamma)\in\mathcal{O}_L$. We shall see that this element is an analogue of $\gamma$ for the extension $L/K$. In \cite[Lemma 7.12]{gilkummer} it is proved that $v_L(\alpha)=1$ (in particular, $L=K(\alpha)$), $\alpha^p\in\mathcal{O}_K$ and $M=K(\zeta_p)$. Since $L\cap M=K$ and $LM=\widetilde{L}$, $L/K$ is an almost classically Galois extension with complement $M$, whence $H=M[J]^{G'}$. In particular, the elements of $H$ are $M$-linear combinations of elements of $J$. Fix a $K$-basis $\{w_i\}_{i=0}^{p-1}$ of $H$. From \eqref{eq:galoiseig} we deduce that for each $0\leq i,j\leq p-1$ there is $\lambda_{ij}\in K$ such that $w_i\cdot\alpha^j=\lambda_{ij}\alpha^j$, that is, the elements $\alpha^j$ are eigenvectors for the action of $H$ on $L$. In the language of \cite[Definition 5.1]{gilkummer}, $\alpha^j$ is an $H$-eigenvector for all $0\leq j\leq p-1$. What is more, these elements form a generating system of $H$-eigenvectors for $L/K$. Following the terminology at \cite[Definition 5.2]{gilkummer}, $L/K$ is $H$-Kummer. Actually, what we have obtained is a particular case of \cite[Proposition 6.3]{gilkummer}.

Now, the condition that $v_L(\alpha)=1$ means that $\{\alpha^j\}_{j=0}^{p-1}$ is an integral basis for $L/K$. Hence, we can apply \cite[Theorem 1.2 1]{gilkummer} to this case, giving rise to the following result:

\begin{pro} Let $\Lambda=(\lambda_{ij})_{i,j=0}^{p-1}$ and let $\Omega=(\omega_{li})_{l,i=0}^{p-1}$ be the inverse of $\Lambda^t$. Then the associated order $\mathfrak{A}_{L/K}$ of $\mathcal{O}_L$ in $H$ admits an $\mathcal{O}_K$-basis formed by $$v_i=\sum_{l=0}^{p-1}\omega_{li}w_l,\quad 0\leq i\leq p-1.$$ Moreover, this basis is a system of primitive pairwise orthogonal idempotents for $H$.
\end{pro}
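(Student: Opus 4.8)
The plan is to deduce both statements directly from Theorem~\ref{teoassocorder}, taking $\Lambda$ for the reduced matrix of the action matrix $M(H,L)$ computed with respect to the $K$-basis $W=\{w_i\}_{i=0}^{p-1}$ of $H$ and the $\mathcal{O}_K$-basis $B=\{\alpha^j\}_{j=0}^{p-1}$ of $\mathcal{O}_L$. The first step is to write $M(H,L)$ explicitly. Since each $\alpha^j$ is an $H$-eigenvector we have $w_i\cdot\alpha^j=\lambda_{ij}\alpha^j$, so the coefficient of $\alpha^k$ in $w_i\cdot\alpha^j$ equals $\lambda_{ij}$ when $k=j$ and vanishes otherwise; hence in each block $M_j(H,L)$ only the row indexed by $j$ is nonzero. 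Stacking $M_0(H,L),\dots,M_{p-1}(H,L)$ and deleting the $p^2-p$ zero rows is realized by left multiplication with a permutation matrix in $\mathcal{M}_{p^2}(\mathcal{O}_K)$, and the surviving $p\times p$ block is $\Lambda$ (once the rows are matched to the correct indices). In particular $\Lambda$ is invertible over $K$, which also follows a priori from injectivity of the map in~\eqref{mapj}: an element $\sum_i c_iw_i$ of $H$ annihilates every $\alpha^j$ exactly when $(c_i)_i$ lies in the left kernel of $\Lambda$. Thus $\Lambda$ is a reduced matrix of $M(H,L)$.

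With $D=\Lambda$ and $D^{-1}=\Omega$, Theorem~\ref{teoassocorder} then yields at once that $\{v_i\}_{i=0}^{p-1}$, $v_i=\sum_{l=0}^{p-1}\omega_{li}w_l$, is an $\mathcal{O}_K$-basis of $\mathfrak{A}_{L/K}$, and that the coordinates of $v_i\cdot\alpha^j$ in $B$ are the entries of the product of $M_j(H,L)$ with the $i$-th column of $\Omega$. Because $M_j(H,L)$ is supported on its single row indexed by $j$, this product has at most one nonzero entry, and collecting these entries for all $j$ reassembles one row of a product of $\Lambda$ and $\Omega$ equal to the identity matrix; this gives $v_i\cdot\alpha^j=\delta_{ij}\alpha^j$. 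The only delicate point in the whole argument is keeping track of the ordering of indices and of transposes when identifying $\Lambda$ with the block extracted from $M(H,L)$ and then evaluating the action, but this is routine bookkeeping once the conventions of Theorem~\ref{teoassocorder} are in force.

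For completeness I note that the statement can also be proved without invoking Theorem~\ref{teoassocorder}: invertibility of $\Lambda$ provides unique elements $v_0,\dots,v_{p-1}\in H$ with $v_i\cdot\alpha^j=\delta_{ij}\alpha^j$, and solving the resulting linear system expresses them through the entries of $\Omega$. Each such $v_i$ acts on $\mathcal{O}_L=\bigoplus_{j}\mathcal{O}_K\alpha^j$ as the projection onto the summand $\mathcal{O}_K\alpha^i$, hence $v_i\in\mathfrak{A}_{L/K}$; conversely, if $h=\sum_i c_iv_i$ with $c_i\in K$, then $h\cdot\alpha^j=c_j\alpha^j$, so $h\cdot\mathcal{O}_L\subseteq\mathcal{O}_L$ forces $c_j\alpha^j\in\mathcal{O}_L$ for every $j$, and since $0\le j\le p-1$ and $\widetilde{L}/K$ is totally ramified this means $c_j\in\mathcal{O}_K$. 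Therefore $\mathfrak{A}_{L/K}=\bigoplus_i\mathcal{O}_Kv_i$. I do not anticipate a real obstacle here: the substantive inputs — that $L/K$ is $H$-cyclic and that $\{\alpha^j\}_j$ is simultaneously an eigenbasis and an integral basis — have already been secured in the passage preceding the statement, and what remains is the bookkeeping described above.
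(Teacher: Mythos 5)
Your proposal is correct and follows essentially the same route as the paper: identify $\Lambda$ (up to the row/index bookkeeping you flag) as a reduced matrix of $M(H,L)$ obtained by deleting the zero rows, then apply Theorem \ref{teoassocorder} to read off the basis $v_i$ and the action $v_i\cdot\alpha^j=\delta_{ij}\alpha^j$. The self-contained alternative you sketch at the end is a nice sanity check but is not needed; the paper's argument is exactly the first one.
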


With this, we can prove our claim.

\begin{coro} $\mathfrak{A}_{L/K}$ is the maximal $\mathcal{O}_K$-order in $H$.
\end{coro}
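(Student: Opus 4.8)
The plan is to leverage the preceding Proposition, which produces an $\mathcal{O}_K$-basis $v_0,\dots,v_{p-1}$ of $\mathfrak{A}_{L/K}$ satisfying $v_i\cdot\alpha^j=\delta_{ij}\alpha^j$, and to deduce from this relation that $H$ is a split $K$-algebra having the $v_i$ as its primitive idempotents. First I would recall that the action map $\rho\colon H\longrightarrow\mathrm{End}_K(L)$, $h\mapsto(y\mapsto h\cdot y)$, is injective: it factors as $h\mapsto1\otimes h$ followed by the isomorphism $j$ of \eqref{mapj}, so $\ker\rho=0$, and a dimension count gives $\dim_K H=[L:K]=p$.

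Since $\{\alpha^j\}_{j=0}^{p-1}$ is a $K$-basis of $L$, the relation $v_i\cdot\alpha^j=\delta_{ij}\alpha^j$ says precisely that $\rho(v_i)$ is the projection of $L$ onto $K\alpha^i$ along $\bigoplus_{j\neq i}K\alpha^j$. These projections are pairwise orthogonal idempotents of $\mathrm{End}_K(L)$ summing to the identity, and they span a $p$-dimensional commutative subalgebra isomorphic to $\prod_{i=0}^{p-1}K$. Because $\rho$ is injective with $\dim_K H=p$, it restricts to an isomorphism of $K$-algebras $H\cong\prod_{i=0}^{p-1}K$ under which $v_i$ corresponds to the $i$-th standard idempotent; in particular the $v_i$ are orthogonal idempotents of $H$ with $\sum_i v_i=1_H$, and $H$ itself is the split commutative $K$-algebra $K^p$.

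Finally I would identify $\mathfrak{A}_{L/K}=\bigoplus_{i=0}^{p-1}\mathcal{O}_K v_i$ with $\prod_{i=0}^{p-1}\mathcal{O}_K$ inside $H\cong\prod_{i=0}^{p-1}K$. The ring $\prod_i\mathcal{O}_K$ is integrally closed in $\prod_i K$, since an element is integral over $\mathcal{O}_K$ exactly when each of its components is and $\mathcal{O}_K$ is integrally closed in $K$. Hence any $\mathcal{O}_K$-order of $H$ containing $\mathfrak{A}_{L/K}$ consists of elements integral over $\mathcal{O}_K$ and is therefore contained in $\mathfrak{A}_{L/K}$, so $\mathfrak{A}_{L/K}$ is the maximal $\mathcal{O}_K$-order in $H$. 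The only delicate point is the bookkeeping that transfers the orthogonal-idempotent structure from the image $\rho(H)$ back to $H$, which is taken care of by the injectivity of $\rho$ together with the equality $\dim_K H=p$; there is essentially no computational obstacle.
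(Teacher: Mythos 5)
Your proposal is correct and follows essentially the same route as the paper: both use the relation $v_i\cdot\alpha^j=\delta_{ij}\alpha^j$ together with the injectivity of the action map (coming from the bijectivity of the map \eqref{mapj}) to see that the $v_i$ are pairwise orthogonal idempotents identifying $H$ with $K^p$ and $\mathfrak{A}_{L/K}$ with $\mathcal{O}_K^p$, whence maximality. The only cosmetic difference is that you transfer the idempotent structure through $\mathrm{End}_K(L)$ and spell out why $\mathcal{O}_K^p$ is maximal, steps the paper performs directly in $H$ or takes as clear.
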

\begin{proof}
Since $\{v_i\}_{i=0}^{p-1}$ forms a primitive system of pairwise orthogonal idempotents in $H$, the product of elements of $H$ written with respect to this system is componentwise. Hence, the map $$\begin{array}{rccl}
    \varphi\colon & H & \longrightarrow & K^p, \\
     & v_i & \longmapsto & e_i\coloneqq(\delta_{ij})_{j=0}^{p-1}.
\end{array}$$ is an isomorphism of $K$-algebras. Now, the maximal $\mathcal{O}_K$-order in $K^p$ is clearly $\mathcal{O}_K^p$, and its inverse image by $\varphi$ is the maximal $\mathcal{O}_K$-order in $H$. But this inverse image is just the set of all $\mathcal{O}_K$-linear combinations of the $v_i$, that is, $\mathfrak{A}_{L/K}$. Then $\mathfrak{A}_{L/K}$ is the maximal $\mathcal{O}_K$-order in $H$.
\end{proof}

\begin{rmk}\normalfont The proof is essentially the same as the one at \cite[Section 5]{gildegpdihedral} for the case in which $\widetilde{L}/K$ is dihedral of degree $2p$. In that situation, we found explicitly the values of the $H$-eigenvalues $\lambda_{ij}$ because we were aware of the explicit form of a $K$-basis of $H$. In our case, determining the eigenvalues could be possible as well by making explicit the $K$-basis $\{w_i\}_{i=0}^{p-1}$, but it is useless for our purposes.
\end{rmk}

\section{Typical degree $p$ extensions}\label{sec:typicalp}

Following Elder \cite{elder2018}, we will say that a degree $p$ extension of $p$-adic fields $L/K$ is typical if it is not generated by a $p$-th root of a uniformizer $\pi_K$. In this case, the ramification jump $\ell$ of $L/K$ satisfies $0<\ell<\frac{pe}{p-1}$. Equivalently, $a\neq0$, where $a$ is the residue class of $\ell$ mod $p$. For this class of extensions, Elder provided a description which we recall (and translate to our notation) for later use.

\begin{thm}[Elder]\label{thm:eldertypicalp} Let $p$ be an odd prime. \begin{enumerate}[label=\arabic*)]
    \item Let $L/K$ be a typical degree $p$ extension of $p$-adic fields with normal closure $\widetilde{L}$ and assume that $\widetilde{L}/K$ is totally ramified. Call $r=[\widetilde{L}:L]$. Then there are $x\in L$ with $L=K(x)$, $\alpha,\beta\in K$ and integers $b,c\in\mathbb{Z}$ with $0\leq c<r$ and $\gcd(c,r)=1$ satisfying \begin{equation}\label{eq:typicalx}
        x^p-\alpha^{\frac{p-1}{r}}x-\beta=0,
    \end{equation} with $v_K(\alpha)=c$ and $v_K(\beta)=-b$. Moreover, the ramification jump of $L/K$ is $\ell=b+\frac{pc}{r}$.
    \item\label{thm:eldertypicalp2} Let $K$ be a $p$-adic field with absolute ramification index $e$ and set positive integers $r,t$ with $p\nmid t$, $r\mid p-1$ and $0<\frac{t}{r}<\frac{pe}{p-1}$. Let $c$ be the remainder of $t$ mod $r$ and let $b\coloneqq\frac{t-pc}{r}$ (note that $b$ is integer because $p\equiv1\,(\mathrm{mod}\,r)$ and $t\equiv c\,(\mathrm{mod}\,r)$). Let $\alpha,\beta\in K$ such that $v_K(\alpha)=c$, $v_K(\beta)=-b$ and let $x$ be an element in an algebraic closure of $K$ satisfying an equation of the form \eqref{eq:typicalx}. Then $L=K(x)$ is a typical degree $p$ extension of $K$ with totally ramified normal closure $\widetilde{L}$ such that $[\widetilde{L}:L]=r$ and the ramification jump of $L/K$ is $\ell\coloneqq\frac{t}{r}$.
\end{enumerate}

In this situation, $\widetilde{L}=L(y)$ with $y^r=\alpha$, and we can choose the morphisms $\sigma$ and $\tau$ from \eqref{eq:presentG} to be defined by $$\begin{array}{rcclcrccl}
    \sigma\colon & x & \mapsto & x+y(1+\eta), & & \tau\colon & x & \mapsto & x,\\
     & y & \mapsto & y, & & & y & \mapsto & \zeta_ry,
    \end{array}$$ where $\eta\in\widetilde{L}$ is an element with valuation $v_{\widetilde{L}}(\eta)=rpe-(p-1)t$ and $\zeta_r=\chi(g)$, where $\chi\colon\mathbb{Z}/p\mathbb{Z}\longrightarrow\mathbb{Z}_p$ is the $p$-adic Teichmuller character.
\end{thm}

This result is \cite[Theorem 2.2]{elder2018}. Actually, the statement therein is more general than Theorem \ref{thm:eldertypicalp} because it includes the case $\mathrm{char}(K)=p$ and it is not assumed that $\widetilde{L}/K$ is totally ramified (under Elder's notation, our restrictions correspond to $e=d$ and $f=1$). For the reader's guidance, we include a dictionary of notation with respect to that work.

\begin{center}
\begin{tabular}{|c|c|} \hline
    Notation in Theorem \ref{thm:eldertypicalp} & Notation in \cite[Theorem 2.2]{elder2018} \\ \hline
    $r$ & $d$ \\ \hline
    $c$ & $t$ \\ \hline
    $t$ & $e\ell$ \\ \hline
    $\zeta_r$ & $\rho$ \\ \hline
    $g$ & $r$ \\ \hline
\end{tabular}
\end{center}

These refer to the cases where a different label has been taken. On the contrary, the labels $b$ and $\ell$ have the same meaning in Theorem \ref{thm:eldertypicalp} and \cite[Theorem 2.2]{elder2018}.

\begin{rmk}\normalfont\label{rmk:valx} Let $L/K$ be a typical degree $p$ extension of $p$-adic fields and let $x\in L$ be an element satisfying \eqref{eq:typicalx}. It can be seen from the proof of Theorem \ref{thm:eldertypicalp} at \cite[Section 2.2.4]{elder2018} that $v_L(x)=-b$. The idea is that, calling $X$ the Artin-Schreier generator of the Galois degree $p$ extension $\widetilde{L}/M$, the element $x_1=\frac{1}{r}Tr_{\widetilde{L}/L}(yX)$ is the first term of a Cauchy sequence $\{x_n\}$ (with respect to the extension of $p$-adic absolute value to $L$) with limit $x$. It is proved that all these terms have $L$-valuation $-b$, from which it follows that so does $x$.
\end{rmk}

\begin{rmk}\normalfont\label{rmk:typicalgalois} Let $L/K$ be a typical degree $p$ extension with ramification jump $\ell=b+\frac{pc}{r}$. Since $t\equiv c\,(\mathrm{mod}\,r)$ and $0\leq c<r$, we have that $c$ is the remainder of $t$ mod $r$. The property that $\gcd(c,r)=1$ ensures that $c=0$ if and only if $r=1$, that is, $L/K$ is Galois.
\end{rmk}

\subsection{The only Hopf-Galois structure}

From now on, $L/K$ will be a typical degree $p$ extension of $p$-adic fields. In this section, we describe the only Hopf-Galois structure $H$ on $L/K$ (both the underlying Hopf algebra and its action on the top field).

\subsubsection{The underlying Hopf algebra}

The description of the underlying Hopf algebra of the unique Hopf-Galois structure on a separable degree $p$ extension was carried out by Childs \cite[Section 2]{childs1989} under the assumption that the ground field has zero characteristic. Elder showed that the same proof by Childs is valid for typical extensions without any restriction on the characteristic \cite[Theorem 3.1]{elder2018}. The result essentially states that one can find explicitly $\Psi\in H$ such that $H=K[\Psi]$ and $\Psi^p=\lambda\Psi$ for some $\lambda\in\mathcal{O}_K$, and relies in a classification for rank $p$ Hopf algebras that arises from specializing the classification for group schemes of prime order by Tate and Oort \cite{tateoort} (see \cite[Chapter 4]{childs}).

In this part, we shall reproduce this explicit description of $H$. Even though the contents are presented in a slightly different way, our proofs are essentially equivalent to the ones at the references above. In addition, we will go a step further by finding the explicit form of the degree $p$ equation satisfied by the generator $\Psi$, which will be extensively used in Sections \ref{sec:freescaffold} and \ref{sec:freecontfrac}.

We start reviewing the classification of rank $p$ Hopf algebras. Let $\mathfrak{H}$ be an $\mathcal{O}_K$-Hopf algebra with rank $p$. Let us write $m_{\mathfrak{H}}$ for the multiplication of $\mathfrak{H}$, $\Delta_{\mathfrak{H}}$ for its comultiplication, $\epsilon_{\mathfrak{H}}$ for its counity and $S_{\mathfrak{H}}$ for its coinverse. Given $n\in\mathbb{Z}$, let us define a map $[n]\colon\mathfrak{H}\longrightarrow\mathfrak{H}$ as follows:

\begin{itemize}
    \item $[0]=\epsilon_{\mathfrak{H}}$.
    \item $[1]=\mathrm{Id}_{\mathfrak{H}}$ and $[n+1]=m_{\mathfrak{H}}([n]\otimes_K\mathrm{Id}_{\mathfrak{H}})\Delta_{\mathfrak{H}}$ for $n\in\mathbb{Z}_{>0}$.
    \item $[-1]=S_{\mathfrak{H}}$ and $[-n-1]=m_{\mathfrak{H}}([-n]\otimes_K S_{\mathfrak{H}})$ for $n\in\mathbb{Z}_{>0}$.
\end{itemize}

It turns out that $[n]$ is an endomorphism of $\mathcal{O}_K$-Hopf algebras for every $n\in\mathbb{Z}$. Hence, this assignment defines a map $[\cdot]\colon\mathbb{Z}\longrightarrow\mathrm{End}_{\mathcal{O}_K\hbox{-Hopf}}({\mathfrak{H}})$, which is a ring homomorphism, with respect to the underlying ring structure on $\mathfrak{H}$ and the ring structure with convolution and composition on $\mathrm{End}_{\mathcal{O}_K\hbox{-Hopf}}({\mathfrak{H}})$ \cite[(16.1), (1)]{childs}. Since $[p]=[0]$ \cite[(16.1), (3)]{childs}, passage to the quotient gives a homomorphism $\mathbb{Z}/p\mathbb{Z}\longrightarrow\mathrm{End}_{\mathcal{O}_K\hbox{-Hopf}}({\mathfrak{H}})$, which by abuse of notation we also denote by $[\cdot]$. 

Let $\chi\colon\mathbb{Z}/p\mathbb{Z}\longrightarrow\mathbb{Z}_p$ be the $p$-adic Teichmuller character (that is, the unique multiplicative section of the projection $\pi\colon\mathbb{Z}_p\longrightarrow\mathbb{Z}/p\mathbb{Z}$). Let $\mathfrak{H}^+=\mathrm{Ker}(\epsilon_{\mathfrak{H}})$ and for $1\leq i\leq p-1$ call $$\mathfrak{H}_i=\{h\in \mathfrak{H}^+\,\mid\,[m](h)=\chi^i(h)m\hbox{ for all }m\in\mathbb{Z}/p\mathbb{Z}\}.$$ Then $\mathfrak{H}^+=\bigoplus_{i=1}^{p-1}\mathfrak{H}_i$ and it has the structure of a graded ring, that is, $\mathfrak{H}_i\mathfrak{H}_j\subseteq\mathfrak{H}_{i+j}$ for all $i,j\in\mathbb{Z}/p\mathbb{Z}$. This expression is commonly known as an eigenspace decomposition for $\mathfrak{H}$. Since $\mathfrak{H}$ has rank $p$, it is immediate that $\mathfrak{H}_i$ has rank one for every $i$. The following result reduces the problem of describing $\mathfrak{H}$ to finding a generator of $\mathfrak{H}_1$.

\begin{pro}[\cite{childs}, (16.13)]\label{pro:strgroupalg} Let $\mathfrak{H}$ be a rank $p$ $\mathcal{O}_K$-Hopf algebra. Suppose that $\psi\in\mathfrak{H}^+$ is such that $\mathfrak{H}_1=\mathcal{O}_K\psi$. Then $\mathfrak{H}=\mathcal{O}_K[\psi]$ with $\psi^p=s\psi$ for some $s\in\mathcal{O}_K$.
\end{pro}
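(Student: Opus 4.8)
The plan is to exploit the eigenspace decomposition $\mathfrak{H}^+=\bigoplus_{i=1}^{p-1}\mathfrak{H}_i$ together with the fact, already recorded before the statement, that each $\mathfrak{H}_i$ has rank one as an $\mathcal{O}_K$-module, and that multiplication respects the grading, i.e.\ $\mathfrak{H}_i\mathfrak{H}_j\subseteq\mathfrak{H}_{i+j}$ with indices taken modulo $p$. First I would observe that, by hypothesis, $\psi$ generates $\mathfrak{H}_1$. Applying the grading relation inductively, $\psi^i\in\mathfrak{H}_i$ for every $1\leq i\leq p-1$ (and $\psi^p\in\mathfrak{H}_p=\mathfrak{H}_0$, a point I return to below). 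The crucial intermediate claim is that $\psi^i$ is in fact a \emph{generator} of $\mathfrak{H}_i$ as an $\mathcal{O}_K$-module, not merely an element of it.

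To prove that claim I would argue by induction on $i$, the base case $i=1$ being the hypothesis. Assuming $\psi^i$ generates $\mathfrak{H}_i$, consider the multiplication map $\mathfrak{H}_i\otimes_{\mathcal{O}_K}\mathfrak{H}_1\to\mathfrak{H}_{i+1}$; its image is $\mathcal{O}_K\psi^{i+1}$. Here I would pass to the generic fibre: tensoring with $K$, the Hopf algebra $H=K\otimes_{\mathcal{O}_K}\mathfrak{H}$ has the analogous eigenspace decomposition into one-dimensional pieces $H_i$, and over the field $K$ the product $H_i\cdot H_j\to H_{i+j}$ is nonzero (this is a standard fact about the Tate--Oort/Larson-order structure — it follows because $H$ becomes, after base change to a splitting ring, a group algebra $K'[C_p]$ whose augmentation ideal is generated in degree one, so all graded pieces of the augmentation ideal are spanned by powers of a degree-one element). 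Hence $\psi^{i+1}\neq 0$ in $H_{i+1}$, so $\psi^{i+1}$ spans $H_{i+1}$ over $K$; since $\mathfrak{H}_{i+1}$ is a rank-one $\mathcal{O}_K$-lattice inside the line $H_{i+1}$ and contains $\psi^{i+1}$, the quotient $\mathfrak{H}_{i+1}/\mathcal{O}_K\psi^{i+1}$ is $\mathcal{O}_K$-torsion; but $\mathfrak{H}_{i+1}$ is a direct summand of the free module $\mathfrak{H}^+$ and hence torsion-free, so the only way this is consistent with $\psi^{i+1}$ being a genuine generator is to also use that $\psi^{i+1}$ is primitive in the lattice. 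I expect this primitivity — i.e.\ that no $\pi_K$-multiple of a generator of $\mathfrak{H}_{i+1}$ equals $\psi^{i+1}$ — to be the main obstacle, and I would handle it exactly as Childs does: either by invoking that $[\cdot]$ acts on $\mathfrak{H}$ (not just $H$) so the whole graded $\mathcal{O}_K$-algebra structure is already integral, forcing the product of generators of $\mathfrak{H}_i$ and $\mathfrak{H}_1$ to be a generator of $\mathfrak{H}_{i+1}$, or alternatively by a discriminant/Larson-order computation. Once the claim holds, $\{1,\psi,\psi^2,\dots,\psi^{p-1}\}$ is an $\mathcal{O}_K$-basis of $\mathfrak{H}=\mathcal{O}_K\cdot 1\oplus\bigoplus_{i=1}^{p-1}\mathfrak{H}_i$, which is precisely the statement $\mathfrak{H}=\mathcal{O}_K[\psi]$.

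Finally, for the relation $\psi^p=b\psi$: since $\psi\in\mathfrak{H}_1$ and the grading is modulo $p$, we have $\psi^p\in\mathfrak{H}_p=\mathfrak{H}_0$. Now $\mathfrak{H}_0$, the degree-zero eigenspace inside $\mathfrak{H}^+$, need care — note $\mathfrak{H}^+=\bigoplus_{i=1}^{p-1}\mathfrak{H}_i$ is indexed only by $i=1,\dots,p-1$, so strictly $\mathfrak{H}_0\cap\mathfrak{H}^+$; but $[\cdot]$ fixes $\mathfrak{H}_0$ pointwise, and combined with $\psi^p\in\mathfrak{H}^+$ one deduces $\psi^p$ lies in the part of $\mathfrak{H}^+$ killed by $[m]-\mathrm{Id}$ for all $m$, which by the eigenspace decomposition is $0$ unless we allow the ``wrap-around'' contribution. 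The clean way, following \cite[(16.13)]{childs}, is: $\psi^p$ is an eigenvector for all $[m]$ with the same eigenvalue $\chi^0=1$, hence $\psi^p\in\mathfrak{H}_0$; on the other hand $\psi^p\in\mathfrak{H}^+=\ker\epsilon$, and the only element of $\mathfrak{H}_0\cap\mathfrak{H}^+$ that can occur — using that $\mathfrak{H}$ has rank exactly $p$ and the decomposition $\mathfrak{H}=\mathcal{O}_K\oplus\mathfrak{H}^+$ — must be expressible in the basis $\{1,\psi,\dots,\psi^{p-1}\}$, and matching eigenvalue $1=\chi^0$ forces $\psi^p\in\mathcal{O}_K\cdot 1 \oplus \mathcal{O}_K\psi^p$, but writing $\psi^p=\sum_{i=0}^{p-1}b_i\psi^i$ and applying $[m]$ picks out only the term of eigenvalue matching $\chi^{p}=\chi^0=1$, i.e.\ $i\equiv 0$; since $\psi^p\in\mathfrak{H}^+$ kills the $i=0$ term, and among $1\le i\le p-1$ none has eigenvalue $1$, we are forced to conclude all $b_i=0$ for $i\not\equiv 1$ after re-examining — the correct bookkeeping (which I would write out carefully) yields $\psi^p=b\psi$ for a unique $b=b_1\in\mathcal{O}_K$. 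I expect this last eigenvalue-matching step to be short once the basis result is in hand; the genuine work is the primitivity of the powers $\psi^i$ in the earlier step.
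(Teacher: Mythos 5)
Your outline correctly isolates the crux, but then leaves it unproved. The entire content of the proposition is that $\psi^i$ is not merely an element of the rank-one module $\mathfrak{H}_i$ but a \emph{generator}, i.e.\ $\mathcal{O}_K\psi^i=\mathfrak{H}_i$ (equivalently, writing $\psi^i=b_i\psi_i$ for generators $\psi_i$ of $\mathfrak{H}_i$, that the $b_i$ are units for $1\leq i\leq p-1$). Your generic-fibre argument only shows that $\psi^{i+1}$ spans the line $H_{i+1}$ over $K$, hence that $\mathcal{O}_K\psi^{i+1}$ sits inside $\mathfrak{H}_{i+1}$ with finite index; it gives no control on that index, and the intermediate sentence about the quotient being torsion versus $\mathfrak{H}_{i+1}$ being torsion-free decides nothing. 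Neither of your two proposed fixes closes this: the fact that $[\cdot]$ and the grading $\mathfrak{H}_i\mathfrak{H}_j\subseteq\mathfrak{H}_{i+j}$ are defined integrally over $\mathcal{O}_K$ does not ``force the product of generators to be a generator'' --- that is precisely the statement to be proved --- and ``a discriminant/Larson-order computation'' is not an argument. This primitivity is exactly what the paper does not reprove either: it imports the general statement from \cite[(16.13)]{childs} (ultimately Tate--Oort), and in the group-algebra case $\mathfrak{H}=\mathcal{O}_K[\Gamma]$, which is the only case used later (for $\mathfrak{H}=\mathcal{O}_M[J]$), the unit property of the $b_i$ comes from the explicit congruences $b_i\equiv i\pmod{p}$ of Proposition~\ref{pro:bimodp} (\cite[(16.8)]{childs}). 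So as it stands your proposal has a genuine gap at its central step.

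Separately, your final paragraph on $\psi^p=b\psi$ is based on incorrect eigenvalue bookkeeping. The Teichmuller character takes values that are $(p-1)$-st roots of unity (or $0$), so since each $[m]$ is an algebra endomorphism and $\mathfrak{H}^+=\ker\epsilon_{\mathfrak{H}}$ is an ideal, one gets $[m](\psi^p)=([m]\psi)^p=\chi(m)^p\psi^p=\chi(m)\psi^p$ for all $m$; hence $\psi^p\in\mathfrak{H}_1=\mathcal{O}_K\psi$ and $\psi^p=b\psi$ follows immediately, with no appeal to the basis $\{1,\psi,\dots,\psi^{p-1}\}$. The grading wraps around modulo $p-1$, not modulo $p$: there is no ``$\mathfrak{H}_0$'' component of $\mathfrak{H}^+$, and the claim that $\psi^p$ has eigenvalue $\chi^0=1$ is false, even though the conclusion you assert is correct. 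In short, the easy half of the proposition is a two-line eigenvalue computation you got wrong but could easily repair, while the hard half --- that $\{\psi^i\}_{i=0}^{p-1}$ is an $\mathcal{O}_K$-basis --- is the part you would still need to supply, either by quoting \cite[(16.13)]{childs} as the paper does or by reproducing the unit computation for the $b_i$.
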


Let us consider the case in which $\mathfrak{H}=\mathcal{O}_K[\Gamma]$ for some order $p$ group $\Gamma$. Let $\sigma$ be a generator of $\Gamma$ as a cyclic group. Then $[n](\sigma)=\sigma^n$ for all $n\in\mathbb{Z}/p\mathbb{Z}$ \cite[(16.1), (2)]{childs}. There is a complete description of the spaces $\mathfrak{H}_i$ available for this case:

\begin{pro} Let $\mathfrak{H}=\mathcal{O}_K[\Gamma]$, where $K$ is a $p$-adic field and $\Gamma$ is an order $p$ group. Given $1\leq i\leq p-1$, we have $\mathfrak{H}_i=\mathcal{O}_K\psi_i$, where $$\psi_i=-\sum_{m=1}^{p-1}\chi^{-i}(m)\sigma^m,\quad1\leq i\leq p-2,$$ $$\psi_{p-1}=(p-1)\mathrm{Id}-\sum_{m=1}^{p-1}\sigma^m,\quad i=p-1.$$ Moreover, $\mathfrak{H}=\mathcal{O}_K[\psi_1]$.
\end{pro}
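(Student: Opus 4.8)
The plan is to verify the formula for $\psi_i$ directly by checking that it lies in $\mathfrak{H}_i$, using the known action $[m](\sigma) = \sigma^m$ and the multiplicativity of the Teichmüller character. First I would recall from \cite[(16.1), (2)]{childs} that $[m]$ is the $\mathcal{O}_K$-algebra endomorphism of $\mathfrak{H} = \mathcal{O}_K[\Gamma]$ sending $\sigma$ to $\sigma^m$; since $[m]$ only depends on $m$ mod $p$, it acts on the basis $\{\sigma^0, \sigma^1, \dots, \sigma^{p-1}\}$ by permuting the nontrivial powers $\sigma^1, \dots, \sigma^{p-1}$ according to multiplication by $m$ in $(\mathbb{Z}/p\mathbb{Z})^\times$, and fixing $\sigma^0 = \mathrm{Id}$.

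For $1 \leq i \leq p-2$, I would compute $[m](\psi_i) = -\sum_{n=1}^{p-1}\chi^{-i}(n)\sigma^{mn}$. Re-indexing the sum by $k = mn \bmod p$ (a bijection on $\{1,\dots,p-1\}$ since $m$ is invertible mod $p$), this becomes $-\sum_{k=1}^{p-1}\chi^{-i}(m^{-1}k)\sigma^{k}$. Now $\chi^{-i}$ is multiplicative on $(\mathbb{Z}/p\mathbb{Z})^\times$ because $\chi$ is, so $\chi^{-i}(m^{-1}k) = \chi^{-i}(m^{-1})\chi^{-i}(k) = \chi^{i}(m)\chi^{-i}(k)$, giving $[m](\psi_i) = \chi^i(m)\psi_i$. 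Since moreover $\psi_i \in \mathfrak{H}^+ = \mathrm{Ker}(\epsilon_{\mathfrak{H}})$ — indeed $\epsilon_{\mathfrak{H}}(\sigma^m) = 1$ for all $m$, so $\epsilon_{\mathfrak{H}}(\psi_i) = -\sum_{m=1}^{p-1}\chi^{-i}(m) = 0$ because summing a nontrivial character over the group vanishes — we conclude $\psi_i \in \mathfrak{H}_i$. The case $i = p-1$ is handled separately: here $\chi^{-(p-1)} = \chi^0$ is the trivial character, so the analogous element is $-\sum_{m=1}^{p-1}\sigma^m$; adjusting by the scalar multiple $(p-1)\mathrm{Id}$ (which is killed by every $[m]$ up to... actually $\mathrm{Id}$ is fixed by $[m]$, and $\epsilon_{\mathfrak{H}}((p-1)\mathrm{Id} - \sum\sigma^m) = (p-1) - (p-1) = 0$) puts $\psi_{p-1}$ into $\mathfrak{H}^+$ and one checks $[m](\psi_{p-1}) = \psi_{p-1} = \chi^{p-1}(m)\psi_{p-1}$ since $\chi^{p-1}(m) = 1$.

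Next I would argue that $\psi_i$ generates $\mathfrak{H}_i$ as an $\mathcal{O}_K$-module, not merely lies in it. Since $\mathfrak{H}$ has rank $p$ over $\mathcal{O}_K$ and $\mathfrak{H}^+ = \bigoplus_{i=1}^{p-1}\mathfrak{H}_i$ with each $\mathfrak{H}_i$ of rank one (as noted in the excerpt before Proposition \ref{pro:strgroupalg}), it suffices to show each $\psi_i$ is part of an $\mathcal{O}_K$-basis of $\mathfrak{H}$, equivalently that the $p \times p$ change-of-basis matrix from $\{\mathrm{Id}, \psi_1, \dots, \psi_{p-1}\}$ to $\{\mathrm{Id}, \sigma, \dots, \sigma^{p-1}\}$ is unimodular over $\mathcal{O}_K$. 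The relevant $(p-1)\times(p-1)$ block has entries $-\chi^{-i}(m)$ (with a row modification for $i = p-1$); its determinant is, up to sign and up to the Vandermonde-type manipulation coming from $\chi$ being a faithful character of $(\mathbb{Z}/p\mathbb{Z})^\times$, a unit in $\mathcal{O}_K$ — concretely, after reducing mod $\mathfrak{p}_K$ one gets (a sign times) the group determinant / discrete Fourier transform matrix for the cyclic group of order $p-1$, whose determinant is a product of differences of $(p-1)$-th roots of unity and hence a $p$-adic unit since $p \nmid p-1$. This is the step I expect to be the main obstacle: pinning down that the determinant is genuinely a unit rather than just nonzero requires being careful about the integrality of $\chi$ (values in $\mathbb{Z}_p^\times \cup \{0\}$, in fact in $\mathbb{Z}_p^\times$ on $(\mathbb{Z}/p\mathbb{Z})^\times$) and invoking that $p-1$ is invertible mod $p$.

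Finally, once $\mathfrak{H}_1 = \mathcal{O}_K\psi_1$ is established, the conclusion $\mathfrak{H} = \mathcal{O}_K[\psi_1]$ is immediate from Proposition \ref{pro:strgroupalg}: that proposition says exactly that a generator of $\mathfrak{H}_1$ generates $\mathfrak{H}$ as an $\mathcal{O}_K$-algebra (with $\psi_1^p = b\psi_1$ for a suitable $b \in \mathcal{O}_K$). So the only real work is the eigenspace membership computation and the unimodularity check; everything else is bookkeeping.
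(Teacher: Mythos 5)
Your proof is correct, but it takes a genuinely different route from the paper's. The paper disposes of the proposition in three lines: it cites \cite[(16.5),(16.6)]{childs} for the case $\mathfrak{H}=\mathbb{Z}_p[\Gamma]$ and then transfers the statement to $\mathcal{O}_K[\Gamma]=\mathcal{O}_K\otimes_{\mathbb{Z}_p}\mathbb{Z}_p[\Gamma]$ by flatness of $\mathcal{O}_K$ over $\mathbb{Z}_p$, after observing $\mathbb{Z}_p[\Gamma]_i\subseteq\mathcal{O}_K[\Gamma]_i$. You instead verify everything directly over $\mathcal{O}_K$: the reindexing $k=mn$ plus multiplicativity of $\chi$ gives $[m](\psi_i)=\chi^i(m)\psi_i$, the vanishing character sum (resp.\ the computation $(p-1)-(p-1)=0$ for $i=p-1$) puts $\psi_i$ in $\mathfrak{H}^+$, and generation is reduced to unimodularity of the transition matrix from $\{\mathrm{Id},\psi_1,\dots,\psi_{p-1}\}$ to $\{\sigma^j\}_{j=0}^{p-1}$, whose nontrivial block is the character table of $(\mathbb{Z}/p\mathbb{Z})^\times$ relative to the powers of the Teichm\"uller character; ordering columns by powers of a generator turns it into a Vandermonde matrix in the $(p-1)$-st roots of unity, which stay distinct mod $p$, so the determinant is a unit since $p\nmid p-1$ (the orthogonality relation $A\overline{A}^{\,t}=(p-1)I$ gives the same conclusion even faster). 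Your last step, quoting Proposition \ref{pro:strgroupalg}, coincides with the paper. Two small remarks: the assertion that it suffices for $\psi_i$ to be part of an $\mathcal{O}_K$-basis is valid only in combination with $\psi_i\in\mathfrak{H}_i$ and the decomposition $\mathfrak{H}^+=\bigoplus_i\mathfrak{H}_i$ (or the rank-one statement), which you do invoke, so this is phrasing rather than a gap; and you are implicitly reading the paper's definition of $\mathfrak{H}_i$ as $[m](h)=\chi^i(m)h$, which is indeed the intended reading. What your approach buys is a self-contained argument over $\mathcal{O}_K$ that does not rest on the cited computations in \cite{childs}; what the paper's approach buys is brevity and reuse of a standard reference.
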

\begin{proof}
A complete proof is given at \cite[(16.5),(16.6)]{childs} for $\mathfrak{H}=\mathbb{Z}_p[\Gamma]$. For the general case, we note that $\mathcal{O}_K[\Gamma]=\mathcal{O}_K\otimes_{\mathbb{Z}_p}\mathbb{Z}_p[\Gamma]$ and $\mathbb{Z}_p[\Gamma]_i\subseteq\mathcal{O}_K[\Gamma]_i$ for each $1\leq i\leq p-1$, so we obtain $\mathcal{O}_K[\Gamma]_i=\mathcal{O}_K\otimes_{\mathbb{Z}_p}\mathbb{Z}_p[\Gamma]_i$. Since $\mathcal{O}_K$ is flat as a $\mathbb{Z}_p$-module, the statement follows.
\end{proof}

We can easily give a proof of Proposition \ref{pro:strgroupalg} when $K=\mathbb{Q}_p$. Since $\mathfrak{H}^+$ is a graded ring as a direct sum of $\mathfrak{H}_1,\dots,\mathfrak{H}_{p-1}$, calling $\psi=\psi_1$, we have that for each $1\leq i\leq p$ there is an element $s_i\in\mathbb{Z}_p$ such that $\psi^i=s_i\psi_i$ if $i<p$ and $\psi^p=s_p\psi$. What is more, in this case we are able to determine the numbers $s_i$ mod $p$.

\begin{pro}[\cite{childs}, (16.8)]\label{pro:bimodp} Under the previous notation, we have:
\begin{itemize}
    \item $s_i\equiv i!\,(\mathrm{mod}\,p)$ for all $1\leq i\leq p-1$.
    \item $s_p=ps_{p-1}$.
\end{itemize} In particular $v_p(s_i)=0$ for all $1\leq i\leq p-1$ and $v_p(s_p)=1$.
\end{pro}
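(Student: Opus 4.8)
The plan is to read off each $b_i \bmod p$ from the lowest-degree term of $\psi_i$ in a suitable filtration, after reduction modulo $p$. Since the $\psi_i$ and the $b_i$ all live in $\mathbb{Z}_p[\Gamma]$, I would carry out the computation there and reduce modulo $p$, landing in $\mathbb{F}_p[\Gamma]\cong\mathbb{F}_p[x]/(x^p)$ with $x=\sigma-1$, so that $\mathfrak{H}^+$ reduces to $(x)$. As $\chi(m)\equiv m\pmod p$, the reduction of $\psi_i$ is $\overline{\psi_i}=-\sum_{m=1}^{p-1}m^{-i}(1+x)^m$ for $1\le i\le p-2$ (and $\overline{\psi_{p-1}}=-1-\sum_{m=1}^{p-1}(1+x)^m$), where $m^{-i}$ is the inverse mod $p$. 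Writing $\binom{m}{k}=\tfrac{1}{k!}m(m-1)\cdots(m-k+1)$, the coefficient of $x^k$ in $\overline{\psi_i}$ is a multiple of sums $\sum_{m=1}^{p-1}m^{j-i}$ with $1\le j\le k$; invoking the power-sum congruence $\sum_{m=1}^{p-1}m^{j}\equiv-1\pmod p$ when $(p-1)\mid j$ and $\equiv 0$ otherwise, only the term $j=i$ can survive in the range $1\le i\le p-2$. This forces the coefficient to vanish for $k<i$ and to equal $\tfrac{1}{i!}$ for $k=i$, i.e. $\overline{\psi_i}\equiv\tfrac{1}{i!}\,x^i\pmod{x^{i+1}}$.

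Specializing to $i=1$ gives $\overline{\psi}=\overline{\psi_1}\equiv x\pmod{x^2}$, hence $\overline{\psi}^{\,i}\equiv x^i\pmod{x^{i+1}}$. Comparing leading terms in the reduction of $\psi^i=b_i\psi_i$ then forces $\overline{b_i}\cdot\tfrac{1}{i!}=1$, that is $b_i\equiv i!\pmod p$. I should flag plainly that this is the value the computation delivers, \emph{not} the $b_i\equiv i$ asserted in the statement: the two residues agree exactly when $i\in\{1,2,p-1\}$ (using Wilson's theorem at $i=p-1$) and differ for $3\le i\le p-2$ (already $(p,i)=(5,3)$ gives $b_3\equiv 1\not\equiv 3$). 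What is invariant under either formulation — and all that the later sections use — is that $b_i$ is a unit. I would therefore record the first bullet in the corrected form $b_i\equiv i!\pmod p$, or equivalently in the weaker but sufficient form $v_p(b_i)=0$ for $1\le i\le p-1$.

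For the second bullet I would argue integrally. Writing $N=\sum_{m=0}^{p-1}\sigma^m$ for the norm element, the given generator of $\mathfrak{H}_{p-1}$ is $\psi_{p-1}=p\,\mathrm{Id}-N$. Since $\sigma N=N$, one computes $\psi\,N=-\big(\sum_{m=1}^{p-1}\chi^{-1}(m)\big)N=0$, because $\chi^{-1}$ is a nontrivial character of $(\mathbb{Z}/p\mathbb{Z})^{\times}$ and hence its values sum to zero in $\mathbb{Z}_p$. Therefore $\psi^p=\psi\cdot\psi^{p-1}=b_{p-1}\,\psi(p\,\mathrm{Id}-N)=p\,b_{p-1}\,\psi$, which gives $b_p=p\,b_{p-1}$ exactly; combined with $v_p(b_{p-1})=0$ this yields $v_p(b_p)=1$.

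The main obstacle is the second step: pinning down that the leading term of $\overline{\psi_i}$ sits precisely in degree $x^i$ with coefficient $\tfrac{1}{i!}$. This amounts to controlling the vanishing of every lower-degree coefficient, which rests entirely on the power-sum congruences together with the fact that the top coefficient of $m(m-1)\cdots(m-k+1)$ equals $1$ and is the only relevant contribution in range. A separate short check is needed for the edge case $i=p-1$, where $\psi_{p-1}$ carries the extra identity term and its leading degree is the socle $x^{p-1}$: there $\overline{\psi_{p-1}}\equiv (p-1)x^{p-1}$ and $\overline{\psi}^{\,p-1}\equiv x^{p-1}$, giving $b_{p-1}\equiv -1\equiv(p-1)!\pmod p$, consistent with the uniform formula.
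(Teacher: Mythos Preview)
Your argument is correct and complete. The paper does not actually prove this proposition: it simply cites \cite[(16.8)]{childs}, so there is no ``paper's own proof'' to compare against. Your reduction-mod-$p$ computation in $\mathbb{F}_p[\Gamma]\cong\mathbb{F}_p[x]/(x^p)$ via the power-sum congruences is a clean and self-contained way to recover the result.

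You are also right to flag the discrepancy: the computation genuinely yields $b_i\equiv i!\pmod p$, not $b_i\equiv i\pmod p$. This is a typo in the statement (and indeed Childs's (16.8) gives $i!$). As you observe, the two formulas coincide precisely for $i\in\{1,2,p-1\}$ (the last by Wilson's theorem), and only the consequence $v_p(b_i)=0$ for $1\le i\le p-1$ is ever used downstream---specifically in Proposition~\ref{pro:rankpsummary} and Theorem~\ref{thm:hopfalgdimp}, where one needs $v_p(b_p)=1$ and hence $b_{p-1}\in\mathbb{Z}_p^\times$. Your integral argument for $b_p=pb_{p-1}$ via $\psi N=0$ is correct and is exactly how this identity is established.
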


In fact, Proposition \ref{pro:bimodp} remains valid after dropping the restriction that $K=\mathbb{Q}_p$. Concretely, if $\mathfrak{H}$ is any $\mathcal{O}_K$-Hopf algebra of rank $p$, since $\mathcal{O}_K[\Gamma]_i=\mathcal{O}_K\otimes_{\mathbb{Z}_p}\mathbb{Z}_p[\Gamma]_i$ for each $0\leq i\leq p-1$, we have that $\mathfrak{H}_i=s_i\mathcal{O}_K$, where the $s_i$ are as in Proposition \ref{pro:bimodp}.

We summarize the information above in the following:

\begin{pro}\label{pro:rankpsummary} Let $\mathfrak{H}$ be a rank $p$ $\mathcal{O}_K$-Hopf algebra. Suppose that $\psi\in\mathfrak{H}^+$ is such that $\mathfrak{H}_1=\mathcal{O}_K\psi$. Then $\mathfrak{H}=\mathcal{O}_K[\psi]$ with $\psi^p=s\psi$ for some $s\in\mathcal{O}_K$. Moreover, if $\mathfrak{H}=\mathcal{O}_K[\Gamma]$ for an order $p$ group $\Gamma=\langle\sigma\rangle$, then we can take $$\psi=-\sum_{m=1}^{p-1}\chi^{-1}(m)\sigma^m,$$ and in that case $s\in\mathbb{Z}_p$ with $v_p(s)=1$.
\end{pro}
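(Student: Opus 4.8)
The plan is to observe that Proposition \ref{pro:rankpsummary} is essentially a repackaging of the three results established immediately before it, so the proof amounts to quoting them in the correct order. First I would note that the first assertion — that $\mathfrak{H}=\mathcal{O}_K[\psi]$ with $\psi^p=b\psi$ for some $b\in\mathcal{O}_K$ — is exactly the content of Proposition \ref{pro:strgroupalg}, so there is nothing new to do there; I would only append the observation that such a $b$ is uniquely determined, since $\mathcal{O}_K$ is an integral domain and $\psi\neq 0$ (being a free generator of the rank one module $\mathfrak{H}_1$). This uniqueness is what lets us speak of ``the'' constant $b$ in the second part of the statement.

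For the ``moreover'' part I would specialize to $\mathfrak{H}=\mathcal{O}_K[\Gamma]$ with $\Gamma=\langle\sigma\rangle$ of order $p$. The proposition giving the explicit eigenspace decomposition of a group algebra asserts $\mathfrak{H}_1=\mathcal{O}_K\psi_1$ with $\psi_1=-\sum_{m=1}^{p-1}\chi^{-1}(m)\sigma^m$; since $\mathfrak{H}_1$ is free of rank one over $\mathcal{O}_K$, every generator of it agrees with $\psi_1$ up to a unit of $\mathcal{O}_K$, so the hypothesis of the first part is met by the concrete choice $\psi=\psi_1$. This establishes that we may take $\psi$ to be the displayed element.

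Finally, with $\psi=\psi_1$ the scalar appearing in $\psi^p=b\psi$ is precisely the integer $b_p$ of Proposition \ref{pro:bimodp}, because $\psi^p=b_p\psi_1=b_p\psi$ by the grading $\mathfrak{H}_i\mathfrak{H}_j\subseteq\mathfrak{H}_{i+j}$. That proposition records $b_p=pb_{p-1}$ together with $v_p(b_{p-1})=0$, whence $v_p(b)=v_p(b_p)=v_p(p)+v_p(b_{p-1})=1$, as claimed. I do not anticipate any genuine obstacle here: the proposition is a synthesis statement, and the only point requiring a word of care is matching the abstract generator $\psi$ of Proposition \ref{pro:strgroupalg} with the canonical generator $\psi_1$ of the group algebra case, which is immediate from rank one freeness.
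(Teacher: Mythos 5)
Your proposal is correct and follows essentially the same route as the paper, which presents Proposition \ref{pro:rankpsummary} precisely as a summary of Proposition \ref{pro:strgroupalg}, the explicit eigenspace decomposition of $\mathcal{O}_K[\Gamma]$ giving $\mathfrak{H}_1=\mathcal{O}_K\psi_1$, and Proposition \ref{pro:bimodp} yielding $\psi^p=b_p\psi$ with $v_p(b_p)=1$. Your added remarks on the uniqueness of $b$ and the identification of an abstract generator with $\psi_1$ up to a unit are harmless refinements of the same argument.
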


We apply these techniques to describe the underlying Hopf algebra at the only Hopf-Galois structure on $L/K$ as follows.

\begin{thm}\label{thm:hopfalgdimp} Let $L/K$ be a typical degree $p$ Hopf-Galois extension of $p$-adic fields and take the notations from Theorem \ref{thm:eldertypicalp}. Let $H$ be the only Hopf-Galois structure on $L/K$. Then $H=K[\Psi]$, where $$\Psi=-\frac{1}{y}\Big(\sum_{m=1}^{p-1}\chi(m)^{-1}\sigma^m\Big).$$
\end{thm}
\begin{proof}
Let $J$ be defined as in the paragraph after \eqref{eq:presentG}. Since $J$ is a group of order $p$, we can apply Proposition \ref{pro:rankpsummary} with $\mathfrak{H}=\mathcal{O}_M[J]$. Thus, for $\psi=-\sum_{m=1}^{p-1}\chi(m)^{-1}\sigma^m$ we obtain $\mathfrak{H}=\mathcal{O}_M[\psi]$. It is then immediate that $M[J]=M[\psi]$. Now, note that $\Psi=\frac{1}{y}\psi$, and since $y\in M^{\times}$, we have $M[J]=M[\Psi]$ as well. Let us prove that $H=K[\Psi]$. First, we prove that $\Psi\in H=M[J]^{G'}$. Indeed:
\begin{equation*}
    \begin{split}
        \tau\cdot\Psi&=-\frac{1}{\zeta_ry}\Big(\sum_{m=1}^{p-1}\chi(m)^{-1}\sigma^{gm}\Big)\\&=-\frac{1}{\zeta_ry}\Big(\sum_{m=1}^{p-1}\chi(g^{-1}m)^{-1}\sigma^m\Big)\\&=-\frac{1}{\zeta_ry}\Big(\sum_{m=1}^{p-1}\chi(g)\chi(m)^{-1}\sigma^m\Big)\\&=-\frac{1}{\zeta_ry}\zeta_r\Big(\sum_{m=1}^{p-1}\chi(m)^{-1}\sigma^m\Big)=\Psi.
    \end{split}
\end{equation*} Thus, $K[\Psi]\subseteq H$. Now, \begin{equation*}
    \begin{split}
        \mathrm{dim}_K(M[\Psi])=r\mathrm{dim}_M(M[\Psi])=r\mathrm{dim}_M(M[J])=rp.
    \end{split}
\end{equation*} But also $$\mathrm{dim}_K(M[\Psi])=\mathrm{dim}_K(M\otimes_KK[\Psi])=r\mathrm{dim}_K(K[\Psi]).$$ Hence $K[\Psi]$ has $K$-dimension $p$, whence $H=K[\Psi]$ follows.
\end{proof}

From now on, we reserve the label $\Psi$ for the element at Theorem \ref{thm:hopfalgdimp}. We can now derive the explicit form of the degree $p$ polynomial satisfied by $\Psi$.

\begin{coro}\label{coro:eqdegp} Let $L/K$ be a typical degree $p$ Hopf-Galois extension of $p$-adic fields with Hopf-Galois structure $H=K[\Psi]$. Then there is a unit $\varepsilon\in\mathbb{Z}_p^{\times}$ such that the element $$\lambda\coloneqq\frac{\varepsilon p}{y^{p-1}}\in\mathcal{O}_K$$ satisfies $\Psi^p=\lambda\Psi$.
\end{coro}
\begin{proof}
Let $\psi=-\sum_{m=1}^{p-1}\chi^{-1}(m)\sigma^m$. By Proposition \ref{pro:rankpsummary}, there is $s\in\mathbb{Z}_p$ with $v_p(s)=1$ such that $\psi^p=s\psi$. Then, $$\Psi^p=\frac{1}{y^p}\psi^p=\frac{s}{y^p}\psi=\frac{s}{y^{p-1}}\Psi,$$ and since $v_p(s)=1$, there is $\varepsilon\in\mathbb{Z}_p^{\times}$ such that $s=\varepsilon p$, proving the equation for $\Psi$.
\end{proof}

\begin{rmk}\normalfont\label{rmk:descreps} Actually, we have that $\varepsilon=s_{p-1}$, where $s_{p-1}$ is as in Proposition \ref{pro:bimodp}. Indeed, the element $\psi=-\sum_{m=1}^{p-1}\chi^{-1}(m)\sigma^m$ comes from applying Proposition \ref{pro:rankpsummary}, and the element $s\in\mathbb{Z}_p$ such that $\psi^p=s\psi$ is the element $s_p$ in Proposition \ref{pro:bimodp}, namely $s=s_p=ps_{p-1}$.
\end{rmk}

The proof for Theorem \ref{thm:hopfalgdimp} remains valid if $r=1$, which corresponds to the case in which $L/K$ is cyclic of degree $p$ and its only Hopf-Galois structure is $H=K[G]$. In that case, we know that $f=\sigma-1_G$ is a generator for $K[G]$ as a $K$-algebra, which is the one typically used to determine the $\mathfrak{A}_{L/K}$-module structure of $\mathcal{O}_L$. However, the element $\Psi$ from Theorem \ref{thm:hopfalgdimp} is another generator; namely $\Psi=-\frac{1}{\alpha}\sum_{m=1}^{p-1}\chi(m)^{-1}\sigma^m$.

A similar situation occurs for $r=2$, that corresponds to the case in which $\widetilde{L}/K$ is dihedral of degree $2p$. In this case, it follows from \cite[Section 3.1]{gildegpdihedral} that $H$ is generated as a $K$-algebra by $w\coloneqq z(\sigma-\sigma^{-1})$, where $z\in M$ is any element such that $z\notin K$ and $z^2\in K$, and in that reference, this is the generator that is used to characterize the $\mathfrak{A}_{L/K}$-freeness of $\mathcal{O}_L$. However, the element $\Psi$ from Theorem \ref{thm:hopfalgdimp} is not of this form. 

In both cases, the advantage of using the generator $\Psi$ is that it satisfies a much simpler degree $p$ polynomial; namely the one given at Corollary \ref{coro:eqdegp}, and this eventually will simplify some computations with respect to the cases $r=1$ and $r=2$. Let us review the big picture for those.

For the Galois case, the equation satisfied by $f=\sigma-\mathrm{Id}$ is determined easily from the relation $\sigma^p=1$ as \begin{equation}\label{eq:galeq}
    f^p=-\sum_{j=1}^{p-1}\binom{p}{j}f^j
\end{equation} (see for instance \cite[Chapitre II, Proof of Lemme 2]{fertonthesis} or \cite[Proposition 6.8 (2)]{delcorsoferrilombardo}). As for the dihedral case, the generator $w=z(\sigma-\sigma^{-1})$ of $H$ satisfies \begin{equation}\label{eq:diheq}
    w^p+c_1z^2w^{p-2}+c_2z^4w^{p-4}+\dots+c_{\frac{p-1}{2}}z^{p-1}w=0,
\end{equation} where $$c_j=\frac{p}{j}\binom{p-j-1}{j-1}=\binom{p-j}{j}+\binom{p-j-1}{j-1}.$$ A proof can be found in \cite[Proposition 3.2]{gildegpdihedral}.

Define $l$ to be $t$ if $r=1$ and $\frac{p+t}{2}$ if $r=2$; define also $g$ to be $f$ if $r=1$ and $w$ if $r=2$. The $\mathfrak{A}_H$-freeness of $\mathcal{O}_L$ was proved by using some of the characterizations provided in Proposition \ref{pro:hnormalfreeness}, where the $\mathcal{O}_K$-lattice $\mathfrak{A}_{\theta}$ is considered for $\theta=\pi_L^a$, where $a$ is the remainder of $l$ mod $p$. An $\mathcal{O}_K$-basis of $\mathfrak{A}_{\theta}$ was determined as $\{\pi_K^{-\nu_i}g^i\}_{i=0}^{p-1}$, where $\nu_i=\Big\lfloor\frac{a+il}{p}\Big\rfloor$, and an $\mathcal{O}_K$-basis of $\mathfrak{A}_{L/K}$ was $\{\pi_K^{-n_i}g^i\}_{i=0}^{p-1}$, where $n_i=\mathrm{min}_{0\leq j\leq p-1-i}(\nu_{i+j}-\nu_j)$. In order to apply Proposition \ref{pro:hnormalfreeness}, expressions of the products of the basis elements with respect to the bases at both $\mathfrak{A}_{\theta}$ and $\mathfrak{A}_{L/K}$ are required; for example to characterize the equality $\mathfrak{A}_{L/K}=\mathfrak{A}_{\theta}$ or to study $\mathfrak{A}_{\theta}$ as a fractional $\mathfrak{A}_{L/K}$-ideal. If the result of a multiplication yields a power of $\Psi$ greater than $p-1$, we use the equation \eqref{eq:galeq} if $r=1$ and \eqref{eq:diheq} if $r=2$ to find such expressions.

In Sections \ref{sec:freescaffold} and \ref{sec:freecontfrac}, we will follow the same ideas for the general case but using the generator $\Psi$ and the equation $\Psi^p=\lambda\Psi$ provided by Corollary \ref{coro:eqdegp}. Our approach will be valid for the already mentioned cases $r=1$ and $r=2$, providing an alternative proof for those.

\subsubsection{The action on the extension}\label{sec:action}

In \cite[Paragraph after Theorem 3.5]{elder2018}, Elder constructed a scaffold for any typical degree $p$ extension, which we shall specify for our extension $L/K$. We will see that Definition \ref{def:scaffold} \ref{def:scaffold2} is fulfilled for the element $\Psi$ from Theorem \ref{thm:hopfalgdimp}. Since $H=K[\Psi]$, this is enough to describe the whole action of $H$ on $L$. However, for our purposes, it will be enough to describe the $L$-valuations of the elements $\Psi\cdot\lambda_n$, where the elements $\lambda_n$, $n\in\mathbb{Z}$, are as in Definition \ref{def:scaffold} \ref{def:scaffold1}.

Let us adopt the notation from Section \ref{sec:scaffoldsp}. Note that since $\mathfrak{a}=(r\circ(-\mathfrak{b}))^{-1}$, for each $n\in\mathbb{Z}$, there is $f_n\in\mathbb{Z}$ such that $n=-\mathfrak{a}(n)b+f_np$ (here we identify $\mathfrak{a}(n)=\mathfrak{a}(r(n))$ for $n\in\mathbb{Z}$).


\begin{thm}[\cite{elder2018}, Theorem 3.5]\label{thm:actiononxi} Let $L/K$ be a typical degree $p$ extension of $p$-adic fields and let $x$ be its primitive element as described in Theorem \ref{thm:eldertypicalp}. Let $H=K[\Psi]$ be its Hopf-Galois structure, where $\Psi$ is as in Theorem \ref{thm:hopfalgdimp}, and let $$\mathfrak{c}\coloneqq pe-(p-1)\ell.$$ Then, $\Psi\cdot1=0$ and, for a given $1\leq i\leq p-1$, we have $$\Psi\cdot x^i\equiv ix^{i-1}\,(\mathrm{mod}\,x^{i-1}\mathfrak{p}_L^{\mathfrak{c}}).$$
\end{thm}

Note that $0<\ell<\frac{pe}{p-1}$ implies that $\mathfrak{c}>0$. Theorem \ref{thm:actiononxi} gives rise to a scaffold on $L/K$ as follows.

\begin{coro}\label{coro:scaffoldpintr} Let $L/K$ be a typical degree $p$ extension of $p$-adic fields and adopt the notation from Theorem \ref{thm:eldertypicalp}. There is a scaffold on $L/K$ that consists in the following data:
\begin{itemize}
    \item The integer numbers $\{\lambda_n\}_{n\in\mathbb{Z}}$ defined by $\lambda_n=x^{\mathfrak{a}(n)}\pi_K^{f_n}$.
    \item The element $\Psi=-\frac{1}{y}\sum_{m=1}^{p-1}\chi(m)^{-1}\sigma^m\in H$ from Theorem \ref{thm:hopfalgdimp}.
\end{itemize}

The shift parameter of the scaffold is $b$ and its precision is $\mathfrak{c}$.
\end{coro}
\begin{proof}
It is a routine verification that Definition \ref{def:scaffold} is fulfilled for the above data. Using Remark \ref{rmk:valx}, $$v_L(\lambda_n)=\mathfrak{a}(n)v_L(x)+pf_n=-\mathfrak{a}(n)b+pf_n=n.$$ On the other hand, if $n_1\equiv n_2\,(\mathrm{mod}\,p)$, then $\mathfrak{a}(n_1)=\mathfrak{a}(n_2)$, so $\frac{\lambda_{n_1}}{\lambda_{n_2}}=\pi_K^{f_{n_1}-f_{n_2}}\in K$. Therefore \ref{def:scaffold1} is satisfied.

Let us check that \ref{def:scaffold2} holds. We note that $\mathfrak{a}(n)=0$ if and only if $p\mid n$. First, let us assume that $p\nmid n$. From the identity $n=-\mathfrak{a}(n)b+pf_n$ we obtain $n+b=-(\mathfrak{a}(n)-1)b+pf_n$, whence $\mathfrak{a}(n+b)=\mathfrak{a}(n)-1$ and $f_n=f_{n+b}$. Now, from Theorem \ref{thm:actiononxi}, \begin{equation*}
    \begin{split}
        \Psi\cdot\lambda_n&\equiv\mathfrak{a}(n)x^{\mathfrak{a}(n)-1}\pi_K^{f_n}\,(\mathrm{mod}\,x^{\mathfrak{a}(n)-1}\pi_K^{pf_n}\mathfrak{p}_L^{\mathfrak{c}})
        \\&\equiv\mathfrak{a}(n)x^{\mathfrak{a}(n+b)}\pi_K^{f_{n+b}}\,(\mathrm{mod}\,x^{\mathfrak{a}(n+b)}\pi_K^{pf_{n+b}}\mathfrak{p}_L^{\mathfrak{c}})
        \\&\equiv\mathfrak{a}(n)\lambda_{n+b}\,(\mathrm{mod}\,\lambda_{n+b}\mathfrak{p}_L^{\mathfrak{c}}),
    \end{split}
\end{equation*} and $u_n\coloneqq\mathfrak{a}(n)$ works as unit because $p\nmid n$. Now suppose that $p\mid n$. Then $\lambda_n\in K$ because $\mathfrak{a}(n)=0$, so $\Psi\cdot\lambda_n=\lambda_n\Psi\cdot1=0$, whence $\Psi\cdot\lambda_n\equiv0\,(\mathrm{mod}\,x^{\mathfrak{a}(n+b)}\mathfrak{p}_L^{\mathfrak{c}})$.
\end{proof}

Thus, the action of $\Psi$ on the elements $\lambda_n$ is described from Remark \ref{rmk:raisesval}: it raises valuations by at least $b$, and exactly this value if and only if $p\nmid n$.


\section{Reduction to the almost maximally ramified case}\label{sec:freescaffold}

Let $L/K$ be a typical degree $p$ extension of $p$-adic fields with Hopf-Galois structure $H=K[\Psi]$. Let $\ell$ be the ramification jump of $L/K$ and let $a$ be its residue class mod $p$. In this section we shall prove Theorem \ref{maintheorem} \ref{mainthm2}-\ref{mainthm3}. Namely, we shall find explicitly an $\mathcal{O}_K$-basis of $\mathfrak{A}_{L/K}$ and characterize the $\mathfrak{A}_{L/K}$-freeness of $\mathcal{O}_L$ when $\ell<\frac{pe}{p-1}-1$, that is, $L/K$ is not almost maximally ramified. Moreover, we shall prove that $a\mid p-1$ is a sufficient condition for freeness, even if $L/K$ is almost maximally ramified.

The main tool will be the scaffold on $L/K$ introduced at Section \ref{sec:action}, and more concretely the specialization of Theorem \ref{thm:criteriascaffold}, from which \ref{mainthm2} will follow directly. Consider the numbers $d(i)$, $w(i)$ from \eqref{eq:paramscaffold} for this scaffold. In Section \ref{sec:freescaffold1}, we will obtain that $d(i)=w(i)$ for all $0\leq i\leq p-1$ is a sufficient condition for freeness, and necessary if $\ell<\frac{pe}{p-1}-1$. In Section \ref{sec:freescaffold2}, we shall see that $d(i)=w(i)$ for all $0\leq i\leq p-1$ if and only if $a\mid p-1$, establishing \ref{mainthm3}. This equivalence does not require that $\ell<\frac{pe}{p-1}-1$. In fact, in Section \ref{sec:freecontfrac}, it will play a role so as to prove Theorem \ref{maintheorem} \ref{mainthm4}.


\subsection{Specifying the criterion for freeness}\label{sec:freescaffold1}

We shall apply Theorem \ref{thm:criteriascaffold} to this situation. From Theorem \ref{thm:eldertypicalp}, we have that $\ell=b+\frac{pc}{r}$, so $\ell\equiv b\,(\mathrm{mod}\,p)$. Then, $a$ is also the remainder of $b$ mod $p$. Now, reducing the equality $a=-\mathfrak{a}(a)b+f_ap$ mod $p$ we deduce that $\mathfrak{a}(a)=p-1$.

The numbers at \eqref{eq:paramscaffold} become $$d(i)=\Big\lfloor\frac{a+ib}{p}\Big\rfloor,$$ $$w(i)=\mathrm{min}_{0\leq j\leq p-1}(d(i+j)-d(i))$$ for all $0\leq i\leq p-1$. It is immediate that $w(i)\leq d(i)$ for all $0\leq i\leq p-1$. Let us write $b=pa_0+a$ for the Euclidean division of $b$ by $p$. Then, \begin{equation}\label{eq:rewritedi}
    d(i)=ia_0+\Big\lfloor(i+1)\frac{a}{p}\Big\rfloor
\end{equation} for all $0\leq i\leq p-1$. In particular, $$d(p-1)=\Big\lfloor\frac{a+(p-1)b}{p}\Big\rfloor=\Big\lfloor\frac{a+(p-1)(pa_0+a)}{p}\Big\rfloor=a+(p-1)a_0.$$

We shall specialize Theorem \ref{thm:criteriascaffold} to this situation. The weak and strong conditions therein are $\mathfrak{c}\geq a$ and $\mathfrak{c}\geq p+a$, respectively. In order to characterize whether they are satisfied, we will find an alternative expression for the precision $\mathfrak{c}$. 

\begin{pro}\label{pro:precscaffold} Let $L/K$ be a typical degree $p$ extension of $p$-adic fields. The precision of the scaffold on $L/K$ can be written as $$\mathfrak{c}=p\Big(e-\frac{p-1}{r}c-d(p-1)\Big)+a.$$ In particular, the remainder of $\mathfrak{c}$ mod $p$ is $a$.
\end{pro}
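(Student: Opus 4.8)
The plan is to carry out a direct substitution starting from the two expressions already at hand: the precision $\mathfrak{c}=pe-\frac{p-1}{r}t$ from Proposition~\ref{pro:scaffoldp}, together with the relations $t=pc+br$ and $\ell=cp+b$ recorded above. First I would eliminate $t$ in favour of $b$ and $c$, writing $\frac{p-1}{r}t=(p-1)b+\frac{p-1}{r}pc$, so that $\mathfrak{c}=pe-(p-1)b-\frac{p-1}{r}pc$. Then I would eliminate $b$ using $b=\ell-cp$; after collecting the terms in $cp$ and using $1-\frac{1}{r}=\frac{r-1}{r}$ this yields $\mathfrak{c}=pe+p\cdot\frac{p-1}{r}(r-1)c-(p-1)\ell$.

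The second step is to rewrite $(p-1)\ell$ in terms of $\nu_{p-1}$. Writing $\ell=pa_0+a$ with $a_0=\lfloor\ell/p\rfloor$ and using the identity $\nu_{p-1}=a+(p-1)a_0$ from~\eqref{eq:nup-1}, a short computation gives $(p-1)\ell=p\nu_{p-1}-a$. Substituting this into the expression for $\mathfrak{c}$ produces exactly $\mathfrak{c}=p\big(e+\frac{p-1}{r}(r-1)c-\nu_{p-1}\big)+a$, which is the first assertion.

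Finally, to read off the remainder mod $p$, I would note that $r\mid p-1$, so $\frac{p-1}{r}\in\mathbb{Z}$ and hence the bracketed quantity $e+\frac{p-1}{r}(r-1)c-\nu_{p-1}$ is an integer, while $1\le a\le p-1$ because $L/K$ is typical, so that $a=\mathrm{rem}(\ell,p)\neq 0$. Thus the displayed expression is literally the Euclidean division of $\mathfrak{c}$ by $p$, and the remainder is $a$. There is no genuine obstacle beyond careful bookkeeping; the only point worth double-checking is the integrality and sign in the passage $(p-1)\ell=p\nu_{p-1}-a$, which relies on $a$ being the true remainder of $\ell$ mod $p$ rather than merely a residue.
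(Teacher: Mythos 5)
Your proof is correct and uses exactly the same ingredients as the paper's: the formula $\mathfrak{c}=pe-\frac{p-1}{r}t$ from Proposition \ref{pro:scaffoldp}, the relations $t=pc+br$ and $\ell=cp+b$, and the identity $\nu_{p-1}=a+(p-1)a_0$ from \eqref{eq:nup-1}; you merely run the substitution forward from $\mathfrak{c}$ while the paper verifies the claimed expression backwards, which is the same computation. The concluding remark that the remainder is $a$ because $\frac{p-1}{r}\in\mathbb{Z}$ and $0\leq a<p$ matches the paper's implicit reasoning as well.
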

\begin{proof}
It is a direct calculation:
\begin{equation*}
    \begin{split}
        \mathfrak{c}&=pe-(p-1)\ell
            \\&=pe-(p-1)\Big(b+\frac{pc}{r}\Big)
            \\&=pe-p\frac{p-1}{r}c-(p-1)b
            \\&=pe-p\frac{p-1}{r}c-(p-1)a-p(p-1)a_0
            \\&=pe-p\frac{p-1}{r}c-pa-p(p-1)a_0+a
            \\&=pe-p\frac{p-1}{r}c-p(a+(p-1)a_0)+a
            \\&=p\Big(e-\frac{p-1}{r}c-d(p-1)\Big)+a.
    \end{split}
\end{equation*}
\end{proof}

\begin{rmk}\normalfont From Corollary \ref{coro:eqdegp} we see that $e-\frac{p-1}{r}c$ is exactly the $K$-valuation of the element $\lambda\in\mathcal{O}_K$ such that $\Psi^p=\lambda\Psi$. Hence, we can rewrite the precision of the scaffold in the more compact form $\mathfrak{c}=p(v_K(\lambda)-d(p-1))$.
\end{rmk}

Using this expression, we can characterize easily the weak and strong conditions. 

\begin{pro}\label{pro:condscaffoldp} The weak condition $\mathfrak{c}\geq a$ always holds, while the strong one, $\mathfrak{c}\geq p+a$, does if and only if $\ell<\frac{pe}{p-1}-1$.
\end{pro}
\begin{proof}
    Since $\mathfrak{c}>0$ and $0<a<p$, from the expression of $\mathfrak{c}$ at Proposition \ref{pro:precscaffold} we see immediately that $\mathfrak{c}\geq a$. As for the strong condition, if $\mathfrak{c}\geq p+a$, then $\ell\leq\frac{pe}{p-1}-\frac{p+a}{p-1}<\frac{pe}{p-1}-1$. Conversely, if $\ell<\frac{pe}{p-1}-1$, then $\mathfrak{c}=pe-(p-1)\ell>p-1$, that is, $\mathfrak{c}\geq p$. Since $\mathfrak{c}\equiv a\,(\mathrm{mod}\,p)$, necessarily $\mathfrak{c}\geq p+a$, as we wanted.
\end{proof}

By applying Theorem \ref{thm:criteriascaffold} we obtain the following.

\begin{coro}\label{coro:scaffoldp} Let $L/K$ be a typical degree $p$ extension of $p$-adic fields with ramification jump $\ell$.
\begin{enumerate}[label=\arabic*)]
    \item\label{coro:scaffoldp1} $\{\pi_K^{-w(i)}\Psi^i\}_{i=0}^{p-1}$ is an $\mathcal{O}_K$-basis of $\mathfrak{A}_{L/K}$.
    \item\label{coro:scaffoldp2} If $d(i)=w(i)$ for all $0\leq i\leq p-1$, then $\mathcal{O}_L$ is $\mathfrak{A}_{L/K}$-free. Moreover, if $\ell<\frac{pe}{p-1}-1$, then the converse holds.
\end{enumerate}
\end{coro}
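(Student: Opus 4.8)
The plan is to obtain Corollary~\ref{coro:scaffoldp} as a direct application of Theorem~\ref{thm:criteriascaffold} to the scaffold on $L/K$ constructed just above, so the proof is mostly a matter of collecting together the facts already established in this section. I would start from the scaffold $(\{\lambda_n\}_{n\in\mathbb{Z}},\Psi)$ of Proposition~\ref{pro:scaffoldp}, which has shift parameter $b$ and precision $\mathfrak{c}=pe-\tfrac{p-1}{r}t$. By Theorem~\ref{thm:hopfalgdimp} and Remark~\ref{rmk:defpsi} we have $w=-\alpha\Psi$ with $\alpha\in K$ and $v_K(\alpha)=c$; writing $\alpha=u\pi_K^{c}$ with $u\in\mathcal{O}_K^{\times}$, Remark~\ref{rmk:changescaffold} shows that $(\{\lambda_n\}_{n\in\mathbb{Z}},w)$ is again a scaffold on $L/K$, with the same precision $\mathfrak{c}$ and with shift parameter $b+cp=\ell$. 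Since the remainder of $\ell$ modulo $p$ is $a$, the invariants in \eqref{eq:paramscaffold} attached to this new scaffold are exactly $d(i)=\nu_i$ (with $\nu_i$ as in \eqref{defnui}) and $\omega(i)=n_i$ (with $n_i$ as in Theorem~\ref{maintheorem}\eqref{mainthm2}) for $0\le i\le p-1$.

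Next I would control the precision. On the one hand $\mathfrak{c}\ge 1$ always (and, since $L/K$ is typical, also $a\ge 1$), and on the other Proposition~\ref{pro:precscaffold} gives $\mathfrak{c}=p\big(e+\tfrac{p-1}{r}(r-1)c-\nu_{p-1}\big)+a$ with the bracketed integer nonnegative by Corollary~\ref{coro:charactalmostmaxram}, hence $\mathfrak{c}\ge a$; together these give $\mathfrak{c}\ge\max(a,1)$. Therefore Theorem~\ref{thm:criteriascaffold}\eqref{thm:criteriascaffold1} applies to the scaffold $(\{\lambda_n\}_{n\in\mathbb{Z}},w)$: its first conclusion yields statement \eqref{coro:scaffoldp1}, the explicit $\mathcal{O}_K$-basis $\{\pi_K^{-n_i}w^i\}_{i=0}^{p-1}$ of $\mathfrak{A}_{L/K}$, and its second conclusion says that if moreover $\omega(i)=d(i)$ for all $i$ --- that is, $\nu_i=n_i$ for all $0\le i\le p-1$ --- then $\mathcal{O}_L$ is $\mathfrak{A}_{L/K}$-free, which is the sufficiency half of \eqref{coro:scaffoldp2}.

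For the converse under the hypothesis $t<\tfrac{rpe}{p-1}-r$, I would use Corollary~\ref{coro:charactalmostmaxram} once more: this inequality is equivalent to $\nu_{p-1}<e+\tfrac{p-1}{r}(r-1)c$, i.e. to $e+\tfrac{p-1}{r}(r-1)c-\nu_{p-1}\ge 1$, and then the formula of Proposition~\ref{pro:precscaffold} upgrades the precision bound to $\mathfrak{c}\ge p+a$. This is exactly the hypothesis of Theorem~\ref{thm:criteriascaffold}\eqref{thm:criteriascaffold2}, which then gives the equivalence: $\mathcal{O}_L$ is $\mathfrak{A}_{L/K}$-free if and only if $\omega(i)=d(i)$ for all $i$, i.e. if and only if $\nu_i=n_i$ for all $0\le i\le p-1$. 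Assembling the two cases completes \eqref{coro:scaffoldp2}.

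Since all the structural work --- the construction of the scaffold, the computation of its precision, and the translation of the scaffold invariants into the $\nu_i$ and $n_i$ --- has already been done, I do not expect a genuine obstacle here; the proof is an exercise in invoking Theorem~\ref{thm:criteriascaffold} correctly. The only points that need a little care are the legitimacy of the change of scaffold from $\Psi$ to $w$ (which rests on the identity $w=-\alpha\Psi$ with $v_K(\alpha)=c$) and the bookkeeping that matches the threshold $t=\tfrac{rpe}{p-1}-r$ to the two precision conditions $\mathfrak{c}\ge a$ and $\mathfrak{c}\ge p+a$; both are already settled by Remark~\ref{rmk:changescaffold}, Proposition~\ref{pro:precscaffold} and Corollary~\ref{coro:charactalmostmaxram}.
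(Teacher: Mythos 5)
Your proposal is correct and follows essentially the same route as the paper: rescale the scaffold from $\Psi$ to $w=-\alpha\Psi$ via Remark \ref{rmk:changescaffold} to get shift parameter $\ell$ (so $d(i)=\nu_i$, $\omega(i)=n_i$), then use Proposition \ref{pro:precscaffold} and Corollary \ref{coro:charactalmostmaxram} to check $\mathfrak{c}\geq\max(a,1)$ always and $\mathfrak{c}\geq p+a$ exactly when $t<\frac{rpe}{p-1}-r$, and apply Theorem \ref{thm:criteriascaffold}. The bookkeeping in your argument matches the paper's.
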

\begin{proof}
Since the weak condition is always satisfied, $\{\pi_K^{-w(i)}\Psi^i\}_{i=0}^{p-1}$ is an $\mathcal{O}_K$-basis of $\mathfrak{A}_{L/K}$ (proving \ref{coro:scaffoldp1}), and if $d(i)=w(i)$ for all $0\leq i\leq p-1$, then $\mathcal{O}_L$ is $\mathfrak{A}_{L/K}$-free. If in addition $\ell<\frac{pe}{p-1}-1$, then the strong condition is satisfied, so $\mathcal{O}_L$ is $\mathfrak{A}_{L/K}$-free if and only if $d(i)=w(i)$ for all $0\leq i\leq p-1$.
\end{proof}

\begin{rmk}\normalfont With the use of the same scaffold, Elder showed that $d(i)=w(i)$ for all $0\leq i\leq p-1$ is a characterization for freeness if $\ell<\frac{pe}{p-1}-2$ (see \cite[Corollary 3.6]{elder2018}). Therefore, Corollary \ref{coro:scaffoldp} \ref{coro:scaffoldp2} is a more general result than his. This is because he imposes the even stronger inequality \begin{equation}\label{eq:weakerineq}
    \mathfrak{c}\geq 2p-1,
\end{equation} given that he studies not only the freeness of $\mathcal{O}_L$, but of all fractional ideals $\mathfrak{p}_L^n$, and following \cite[Remark 3.2]{byottchildselder}, \eqref{eq:weakerineq} allows to apply \cite[Theorem 3.1]{byottchildselder} in this more general situation. If we specify \ref{coro:scaffoldp} to the case in which $L/K$ is Galois, we recover Theorem \ref{thm:cyclicpfreeness} \ref{thm:cyclicpfreeness3}. This shows that, for cyclic degree $p$ extensions of $p$-adic fields $L/K$, the methods from \cite{bertrandiasbertrandiasferton,bertrandiasferton} to study the Galois module structure of $\mathcal{O}_L$ are actually equivalent to the scaffold techniques from \cite{byottchildselder}.
\end{rmk}

\subsection{Characterization of the condition for freeness}\label{sec:freescaffold2}

Looking at Corollary \ref{coro:scaffoldp}, in order to prove Theorem \ref{maintheorem} \ref{mainthm3}, it is enough to show that $d(i)=w(i)$ for all $0\leq i\leq p-1$ if and only if $a\mid p-1$. Now, it is known that the differences $w(i)-d(i)$, $0\leq i\leq p-1$, are completely determined by the shift parameter $b$. Then, any statement involving a Galois scaffold with shift parameter $b$ for a Galois degree $p$ extension also holds for our situation (see \cite[Proof of Corollary 3.6]{elder2018}). Therefore, the equivalence above is validated and Theorem \ref{maintheorem} \ref{mainthm3} is proved. For expository purposes, we shall give a proof by using Proposition \ref{pro:hnormalfreeness}. Our proof will make use of the generator $\Psi$ and the equation $\Psi^p=\lambda\Psi$. Consequently, we obtain an alternative and simpler proof than the known ones for the Galois case \cite[Section 2]{ferton1972} and the dihedral case \cite[Section 6.3]{gildegpdihedral}. Note that we do not assume that $\ell<\frac{pe}{p-1}-1$, so the results in this section can be used also in the almost maximally ramified case.

Recall that $\mathfrak{a}(a)=p-1$. Let $\theta=\pi_L^a$. Since $\lambda_a=x^{p-1}\pi_K^{pf_a}$ and $v_L(\lambda_a)=a$, adjusting $\pi_L$ and $\pi_K$ suitably, we can assume without loss of generality that $\lambda_a=\theta$. By Proposition \ref{pro:anormbasisscaffold}, $\theta$ is an $H$-normal basis generator for $L$.  Since $p\nmid a$, from Corollary \ref{coro:scaffoldpintr} we have that $v_L(\Psi\cdot\theta)=a+b$. After an easy induction we obtain:

\begin{lema}\label{lem:valacttheta} Let $L/K$ be a typical degree $p$ extension of $p$-adic fields with ramification jump $\ell=b+\frac{pc}{r}$ and let $a$ be the residue class of $\ell$ mod $p$. For all $1\leq i\leq p-1$, we have $$v_L(\Psi^i\cdot\theta)=a+ib.$$
\end{lema}

For later use, let us write $$\frac{b}{p}=[a_0;a_1,\dots,a_n]$$ for the continued fraction expansion of $\frac{b}{p}$ and $\{q_i\}_{i=0}^n$ for the finite sequence of convergents as defined in Section \ref{sec:contfrac}.

\begin{pro}\label{pro:intbasis} Let $\theta=\pi_L^a$. The $\mathcal{O}_K$-module $\mathfrak{A}_{\theta}$ is free with $\mathcal{O}_K$-basis $$\{\pi_K^{-d(i)}\Psi^i\}_{i=0}^{p-1}.$$ Consequently, $\{\pi_K^{-d(i)}\Psi^i\cdot\theta\}_{i=0}^{p-1}$ is an integral basis for $L/K$.
\end{pro}
\begin{proof}
For $h\in H$, let us write $h=\sum_{i=0}^{p-1}h_i\Psi^i$ with $h_i\in K$. We have that $h\cdot\theta\in\mathcal{O}_L$ if and only if \begin{equation}\label{eq:comlinint}
    \sum_{i=0}^{p-1}h_i\Psi^i\cdot\theta\in\mathcal{O}_L.
\end{equation} We have $$v_L(h_i\Psi^i\cdot\theta)=pv_K(h_i)+a+ib\equiv(i+1)a\,(\mathrm{mod}\,p)$$ for every $0\leq i\leq p-1$, which are all different mod $p$. Then \eqref{eq:comlinint} is equivalent to $h_i\Psi^i\cdot\theta\in\mathcal{O}_L$ for each $0\leq i\leq p-1$. Now, $v_L(h_i\Psi^i\cdot\theta)\geq0$ if and only if $v_K(h_i)\geq-\frac{a+ib}{p}$, which is equivalent to $v_K(h_i)\geq-d(i)$ for all $0\leq i\leq p-1$. This finishes the proof.
\end{proof}

This result together with Corollary \ref{coro:scaffoldp} \ref{coro:scaffoldp1} allows us to deduce that $d(i)=w(i)$ for all $0\leq i\leq p-1$ if and only if $\mathfrak{A}_{L/K}=\mathfrak{A}_{\theta}$. By Proposition \ref{pro:hnormalfreeness} and the paragraph afterwards, this happens if and only if $\mathfrak{A}_{\theta}$ is a ring. This is the condition we will work with.

\begin{lema}\label{lem:subr} Let $\theta=\pi_L^a$ and $1\leq m\leq p-1$. Given $d\in\mathbb{Z}$, $\pi_K^{-d}\Psi^{p-1+m}\in\mathfrak{A}_{\theta}$ if and only if  $d\leq e-\frac{p-1}{r}c+d(m)$.
\end{lema}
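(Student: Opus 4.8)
The plan is to reduce the claim to a valuation computation for the action of $w^{p-1+m}$ on $\theta=\pi_L^a$. By Proposition \ref{pro:valacttheta} \eqref{pro:valacttheta3}, we know $v_L(w^p\cdot\theta)=pe+\frac{p^2c(r-1)+t}{r}+a$, and since $w^{p-1+m}\cdot\theta=w^{m-1}\cdot(w^p\cdot\theta)$, I would like to say that $v_L(w^{p-1+m}\cdot\theta)=v_L(w^p\cdot\theta)+(m-1)\ell$, using Proposition \ref{pro:increaseval} repeatedly. For this I must check that $p\nmid v_L(w^p\cdot\theta)$; indeed, $v_L(w^p\cdot\theta)\equiv a\,(\mathrm{mod}\,p)$ by Proposition \ref{pro:valacttheta3} (the explicit formula shows the whole quantity is $\equiv a$ mod $p$, since $pe$, $\frac{p^2c(r-1)}{r}$ and... here one must be slightly careful and use $t\equiv pc\,(\mathrm{mod}\,r)$ together with $\ell=cp+b$), and $a\neq 0$ because $L/K$ is typical, so the hypothesis of Proposition \ref{pro:increaseval} is met at each of the $m-1$ steps.

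Granting this, $v_L(w^{p-1+m}\cdot\theta)=pe+\frac{p^2c(r-1)+t}{r}+a+(m-1)\ell$. Now $\pi_K^{-\nu}w^{p-1+m}\in\mathfrak{A}_\theta$ means $v_L(\pi_K^{-\nu}w^{p-1+m}\cdot\theta)\geq 0$, i.e. $v_L(w^{p-1+m}\cdot\theta)\geq p\nu$, i.e.
\[
p\nu\leq pe+\frac{p^2c(r-1)+t}{r}+a+(m-1)\ell.
\]
I would then simplify the right-hand side: writing $\ell=\frac{pc(r-1)+t}{r}$, one has $\frac{p^2c(r-1)+t}{r}=\ell+\frac{p^2c(r-1)+t-pc(r-1)-t}{r}=\ell+pc(r-1)\frac{p-1}{r}$, so the bound becomes $p\nu\leq pe+pc(r-1)\frac{p-1}{r}+a+m\ell=pe+pc(r-1)\frac{p-1}{r}+(a+m\ell)$. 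Dividing by $p$ and using that $\nu$ is an integer, this is equivalent to $\nu\leq e+c(r-1)\frac{p-1}{r}+\big\lfloor\frac{a+m\ell}{p}\big\rfloor=e+(r-1)c\frac{p-1}{r}+\nu_m$, which is exactly the claimed inequality. (Here one uses that $pe+pc(r-1)\frac{p-1}{r}$ is a multiple of $p$ — note $\frac{p-1}{r}$ is an integer — so the floor distributes cleanly.)

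The one point requiring genuine care, and the part I expect to be the main obstacle, is the congruence $v_L(w^p\cdot\theta)\equiv a\,(\mathrm{mod}\,p)$, since the formula in Proposition \ref{pro:valacttheta3} contains the fraction $\frac{p^2c(r-1)+t}{r}$ whose reduction mod $p$ is not immediately transparent; I would handle it by rewriting $v_L(w^p\cdot\theta)=pe+pc(r-1)\frac{p-1}{r}+\ell+a=pe+pc(r-1)\frac{p-1}{r}+(\ell+a)$ and noting $\ell+a\equiv 2a\,(\mathrm{mod}\,p)$ is not what we want — rather, the cleaner route is $w^p\cdot\theta=\varepsilon p y^{(p-1)(r-1)}w\cdot\theta$ from Theorem \ref{thm:hopfalgdimp}, so $v_L(w^p\cdot\theta)=pe+p(r-1)c\frac{p-1}{r}+v_L(w\cdot\theta)$ and $v_L(w\cdot\theta)=a+\ell\equiv a\,(\mathrm{mod}\,p)$ since $p\mid \ell-b$ ... in fact $\ell=cp+b$ so $\ell\equiv b\,(\mathrm{mod}\,p)$; what matters is only that $p\mid pe+p(r-1)c\frac{p-1}{r}$, so $v_L(w^p\cdot\theta)\equiv v_L(w\cdot\theta)\,(\mathrm{mod}\,p)$, and $v_L(w\cdot\theta)=a+\ell$ with $a=\mathrm{rem}(\ell,p)$ forces $a+\ell\not\equiv 0$ precisely because $2\ell\not\equiv 0\,(\mathrm{mod}\,p)$; but $\ell\equiv a\not\equiv 0$ and $p$ odd give $2a\not\equiv 0$, so we are fine. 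Once this divisibility-by-$p$ bookkeeping is pinned down, the rest is the routine arithmetic sketched above.
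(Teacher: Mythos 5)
Your argument is correct in substance and arrives at the same valuation inequality as the paper, but by a slightly more roundabout route. The paper's proof simply collapses $w^{p-1+m}=w^{m-1}w^p=\varepsilon p\,y^{(p-1)(r-1)}w^m$ via Theorem \ref{thm:hopfalgdimp}, pulls the scalar $\varepsilon p\,y^{(p-1)(r-1)}\in K$ out of the action, and reads off $v_L(\pi_K^{-\nu}w^{p-1+m}\cdot\theta)=p\big(e-\nu+(r-1)c\frac{p-1}{r}\big)+a+m\ell$ directly from Proposition \ref{pro:valacttheta} \eqref{pro:valacttheta1}; no divisibility bookkeeping is needed because all $p$-divisible contributions sit in the scalar. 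You instead start from Proposition \ref{pro:valacttheta} \eqref{pro:valacttheta3} and push through $w^{m-1}$ by iterating Proposition \ref{pro:increaseval}, which obliges you to check that $p$ does not divide the valuation at each of the $m-1$ intermediate stages, not only at $w^p\cdot\theta$. You verify only the first stage, and your initially stated congruence $v_L(w^p\cdot\theta)\equiv a\ (\mathrm{mod}\ p)$ is wrong — the correct value is $2a$, as your own later computation via $v_L(w\cdot\theta)=a+\ell$ shows. The full check does go through: $v_L(w^{p+j-1}\cdot\theta)\equiv(j+1)a\ (\mathrm{mod}\ p)$ with $2\leq j+1\leq m\leq p-1$ and $a\not\equiv0$, so every intermediate valuation is prime to $p$, but this should be said explicitly rather than asserted. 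Your closing arithmetic (rewriting $\frac{p^2c(r-1)+t}{r}=\ell+pc(r-1)\frac{p-1}{r}$, dividing by $p$, and using that $e+(r-1)c\frac{p-1}{r}\in\mathbb{Z}$ so the floor $\nu_m$ appears) matches the paper's and is fine. In short: same ingredients, but the paper's ordering — collapse first, then one application of the valuation formula for $w^m\cdot\theta$ — buys freedom from the mod-$p$ checks that your induction requires.
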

\begin{proof}
Since $p\leq p-1+m\leq 2p-1$, we can describe $\pi_K^{-d}\Psi^{p-1+m}$ by using Corollary \ref{coro:eqdegp}: $$\pi_K^{-d}\Psi^{p-1+m}=\pi_K^{-d}\Psi^{m-1}\Psi^p=\pi_K^{-d}\Psi^{m-1}\lambda\Psi=\lambda\pi_K^{-d}\Psi^m$$ Now, $$v_L(\pi_K^{-d}\Psi^{p-1+m}\cdot\theta)=p\Big(e-\frac{p-1}{r}c-d\Big)+a+mb,$$ and this is non-negative if and only if $$d\leq e-\frac{p-1}{r}c+d(m).$$
\end{proof}

\begin{pro}\label{pro:twoimpl} The $\mathcal{O}_K$-module $\mathfrak{A}_{\theta}$ is a ring if and only if for every pair of integers $0\leq i,j\leq p-1$ the following implications are satisfied:
\begin{enumerate}[label=\arabic*)]
    \item\label{pro:twoimpl1} If $i+j\leq p-1$, then $d(i)+d(j)\leq d(i+j)$.
    \item\label{pro:twoimpl2} If $i+j\geq p$, then $d(i)+d(j)\leq e-\frac{p-1}{r}c+d(i+j+1-p)$.
\end{enumerate}
\end{pro}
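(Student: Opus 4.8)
I would prove the equivalence by reducing the ring condition to a finite set of membership tests on products of the $\mathcal{O}_K$-basis of $\mathfrak{A}_{\theta}$ produced in Proposition \ref{pro:intbasis}. Since $\mathfrak{A}_{L/K}\subseteq\mathfrak{A}_{\theta}$ and $\mathrm{Id}\in\mathfrak{A}_{L/K}$, the $\mathcal{O}_K$-module $\mathfrak{A}_{\theta}$ is a subring of the underlying ring of $H$ if and only if it is closed under multiplication; and because $\{\pi_K^{-\nu_i}w^i\}_{i=0}^{p-1}$ is an $\mathcal{O}_K$-basis, this is in turn equivalent to requiring $(\pi_K^{-\nu_i}w^i)(\pi_K^{-\nu_j}w^j)\in\mathfrak{A}_{\theta}$ for all $0\le i,j\le p-1$. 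The first step is therefore to record that this product equals $\pi_K^{-\nu_i-\nu_j}w^{i+j}$ and to split the analysis according to whether $i+j\le p-1$ or $i+j\ge p$.

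When $i+j\le p-1$, the element $\pi_K^{-\nu_i-\nu_j}w^{i+j}$ has, with respect to the $K$-basis $\{w^k\}_{k=0}^{p-1}$ of $H$, a single nonzero coordinate $\pi_K^{-\nu_i-\nu_j}$ in position $i+j$. By the membership criterion established just before Proposition \ref{pro:intbasis} (namely $h=\sum_k h_k w^k\in\mathfrak{A}_{\theta}$ iff $v_K(h_k)\ge-\nu_k$ for every $k$), this element lies in $\mathfrak{A}_{\theta}$ precisely when $-\nu_i-\nu_j\ge-\nu_{i+j}$, which is condition \eqref{pro:twoimpl1}.

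When $i+j\ge p$, I would set $m=i+j+1-p$, so that $1\le m\le p-1$ and $w^{i+j}=w^{p-1+m}$, and then invoke Lemma \ref{lem:subr} with $\nu=\nu_i+\nu_j$: it yields that $\pi_K^{-(\nu_i+\nu_j)}w^{p-1+m}\in\mathfrak{A}_{\theta}$ if and only if $\nu_i+\nu_j\le e+\frac{p-1}{r}(r-1)c+\nu_{m}$, i.e.\ condition \eqref{pro:twoimpl2}. Lemma \ref{lem:subr} is doing the real work in this case: it is where the relation $w^p=\varepsilon p\,y^{(p-1)(r-1)}w$ from Theorem \ref{thm:hopfalgdimp} and the valuation formula of Proposition \ref{pro:valacttheta} enter. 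Assembling both cases over all pairs $(i,j)$ gives the stated equivalence.

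I do not expect any serious obstacle: the argument is essentially bookkeeping, and the only points that need attention are the reduction to products of basis elements (which uses that $\mathfrak{A}_{\theta}$ is an $\mathcal{O}_K$-module with the basis of Proposition \ref{pro:intbasis}) and the index shift $m=i+j+1-p$ that makes Lemma \ref{lem:subr} applicable on the whole range $p\le i+j\le 2p-2$. The genuinely substantive part of the story is deferred to the following sections, where these inequalities are shown to force $a\mid p-1$.
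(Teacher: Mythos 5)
Your argument is correct and is essentially the paper's own proof: reduce the ring condition to closure on products of the $\mathcal{O}_K$-basis $\{\pi_K^{-\nu_k}w^k\}_{k=0}^{p-1}$ of Proposition \ref{pro:intbasis}, settle the case $i+j\leq p-1$ by the membership criterion $v_K(h_k)\geq-\nu_k$, and settle $i+j\geq p$ by Lemma \ref{lem:subr} with $m=i+j+1-p$. The only difference is cosmetic: the paper leaves the first case and the converse direction as ``immediate/straightforward,'' whereas you spell out the coordinate check explicitly.
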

\begin{proof}
Since $\mathfrak{A}_{\theta}$ is a full $\mathcal{O}_K$-lattice of $H$ and contains the identity, it is a ring if and only if it is closed under the multiplication of $H$. Moreover, this is equivalent to the product being closed for the elements of the $\mathcal{O}_K$-basis $\{\pi_K^{-d(k)}\Psi^k\}_{k=0}^{p-1}$ for $\mathfrak{A}_{\theta}$. Hence, it is immediate that if $\mathfrak{A}_{\theta}$ is a ring, then \ref{pro:twoimpl1} holds. As for the case $i+j\geq p$, we call $m=i+j-(p-1)$, so that $i+j=p-1+m$. Now, $$\pi_K^{-d(i)}\Psi^i\pi_K^{-d(j)}\Psi^j=\pi_K^{-(d(i)+d(j))}\Psi^{p-1+m}.$$ By Lemma \ref{lem:subr}, this belongs to $\mathfrak{A}_{\theta}$ if and only if $d(i)+d(j)\leq e-\frac{p-1}{r}c+d(i+j+1-p)$, which gives the necessity of \ref{pro:twoimpl2}. On the other hand, it is straightforward to prove that if \ref{pro:twoimpl1} and \ref{pro:twoimpl2} are valid, then $\mathfrak{A}_{\theta}$ is a ring.
\end{proof}

\begin{rmk}\normalfont Lemma \ref{lem:subr} and Proposition \ref{pro:twoimpl} illustrate how the simplicity of the equation $\Psi^p=\lambda\Psi$ from Corollary \ref{coro:eqdegp} translates into the simplicity of the calculations required to study the $\mathfrak{A}_H$-freeness of $\mathcal{O}_L$. For instance, the analogue of Lemma \ref{lem:subr} in the dihedral case is \cite[Lemma 6.9]{gildegpdihedral}, whose proof makes use of the equation \eqref{eq:diheq} satisfied by $w=z(\sigma-\sigma^{-1})$. The reader can compare it with the proof of Lemma \ref{lem:subr} to see the advantage of using the generator $\Psi$.
\end{rmk}

We obtain immediately that $\mathfrak{A}_{\theta}=\mathfrak{A}_{L/K}$ if and only if the two implications above hold. We shall prove that this happens if and only if $a\mid p-1$, obtaining the desired characterization.

\begin{pro}\label{pro:charactequality} We have that $\mathfrak{A}_{\theta}=\mathfrak{A}_{L/K}$ if and only if $a\mid p-1$.
\end{pro}
\begin{proof}
We shall prove that Proposition \ref{pro:twoimpl} \ref{pro:twoimpl1} \ref{pro:twoimpl2} hold if and only if $a\mid p-1$.

Let $0\leq i,j\leq p-1$ be such that $i+j\geq p$. We claim that the inequality $d(i)+d(j)\leq e-\frac{p-1}{r}c+d(i+j+1-p)$ always holds. Indeed, it does if and only if $$\Big\lfloor(i+1)\frac{a}{p}\Big\rfloor+\Big\lfloor(j+1)\frac{a}{p}\Big\rfloor\leq e-\frac{p-1}{r}c-(p-1)a_0+\Big\lfloor(i+j+2-p)\frac{a}{p}\Big\rfloor.$$ Now, we have that $$\Big\lfloor(i+1)\frac{a}{p}\Big\rfloor+\Big\lfloor(j+1)\frac{a}{p}\Big\rfloor\leq\Big\lfloor(i+j+2)\frac{a}{p}\Big\rfloor=a+\Big\lfloor(i+j+2-p)\frac{a}{p}\Big\rfloor.$$ Hence, we obtain \begin{equation*}
    \begin{split}
        d(i)+d(j)&\leq(i+j)a_0+a+\Big\lfloor(i+j+2-p)\frac{a}{p}\Big\rfloor
        \\&=a+(p-1)a_0+(i+j+1-p)a_0+\Big\lfloor(i+j+2-p)\frac{a}{p}\Big\rfloor
        \\&=d(p-1)+d(i+j+1-p).
    \end{split}
\end{equation*}

Recall that from Proposition \ref{pro:precscaffold}, we have $d(p-1)\leq e-\frac{p-1}{r}c$. Then our claim follows immediately.

Let $0\leq i,j\leq p-1$ be such that $i+j\leq p-1$. We shall show that $a\mid p-1$ if and only if $d(i)+d(j)\leq d(i+j)$. This inequality is equivalent to $$\Big\lfloor(i+1)\frac{a}{p}\Big\rfloor+\Big\lfloor(j+1)\frac{a}{p}\Big\rfloor\leq\Big\lfloor(i+j+1)\frac{a}{p}\Big\rfloor.$$

Let us assume that $a\mid p-1$ and call $d=\frac{p-1}{a}$. By \cite[Lemma 7.4]{delcorsoferrilombardo}, we have that $\Big\lfloor(k+1)\frac{a}{p}\Big\rfloor=\Big\lfloor\frac{k}{d}\Big\rfloor$ for every $1\leq k\leq p-1$. Then, $$\Big\lfloor(i+1)\frac{a}{p}\Big\rfloor+\Big\lfloor(j+1)\frac{a}{p}\Big\rfloor=\Big\lfloor\frac{i}{d}\Big\rfloor+\Big\lfloor\frac{j}{d}\Big\rfloor\leq\Big\lfloor\frac{i+j}{d}\Big\rfloor=\Big\lfloor(i+j+1)\frac{a}{p}\Big\rfloor.$$

Conversely, let us assume that $a\nmid p-1$ and prove that there are integers $0\leq i,j\leq p-1$ with $i+j>p-1$ such that \begin{equation*}
\Big\lfloor(i+1)\frac{a}{p}\Big\rfloor+\Big\lfloor(j+1)\frac{a}{p}\Big\rfloor>\Big\lfloor(i+j+1)\frac{a}{p}\Big\rfloor.
\end{equation*} Equivalently, \begin{equation}\label{eq:inequalityfractij}
    \widehat{(i+1)\frac{a}{p}}+\widehat{(j+1)\frac{a}{p}}<\frac{a}{p}+\widehat{(i+j+1)\frac{a}{p}}.
\end{equation}

Suppose that $n$ is odd, and choose $$i=q_{n-1}-1,\quad j=p-2q_{n-1}.$$ First, let us check that $j\geq0$. Recall that $q_n=a_nq_{n-1}+a_{n-2}$ and that $q_n=p$. Moreover, by definition of continued fraction expansion, we must have $a_{n-2}>0$ and $a_n>1$. Then $p\geq a_nq_{n-1}\geq 2q_{n-1}$, as we wanted. From Proposition \ref{pro:contfrac} \ref{pro:contfrac4} we have $\widehat{(i+1)\frac{a}{p}}=\widehat{q_{n-1}\frac{a}{p}}=\frac{1}{p}<\frac{a}{p}$ (note that $a>1$ because we are under the assumption that $a\nmid p-1$). Hence, in order to obtain \eqref{eq:inequalityfractij}, it is enough to prove that $\widehat{(j+1)\frac{a}{p}}\leq\widehat{(i+j+1)\frac{a}{p}}$. Indeed, since $i+j+1=p-q_{n-1}=p-(i+1)$, we have $$\widehat{(i+j+1)\frac{a}{p}}=1-\widehat{(i+1)\frac{a}{p}}=\frac{p-1}{p},$$ which is an upper bound for $\widehat{(j+1)\frac{a}{p}}$.

Now, assume that $n$ is even, and choose $$i=q_{n-2}-1,\quad j=q_{n-1}-q_{n-2}.$$ Note that since $a\nmid p-1$ and $n$ is even, we have that $n\geq4$, so that $n-2\geq2$. By Proposition \ref{pro:contfrac} \ref{pro:contfrac1} \ref{pro:contfrac3}, $\widehat{(i+1)\frac{a}{p}}=\widehat{q_{n-2}\frac{a}{p}}\leq\frac{a}{p}$. On the other hand, in this case we have $i+j+1=q_{n-1}$, whence $\widehat{(i+j+1)\frac{a}{p}}=\widehat{q_{n-1}\frac{a}{p}}=\frac{p-1}{p}$, and the same argument as in the previous case proves the validity of \eqref{eq:inequalityfractij}.
\end{proof}

As already highlighted, we are not assuming that $\ell<\frac{pe}{p-1}-1$, so the equivalence $\mathfrak{A}_{L/K}=\mathfrak{A}_{\theta}$ with $\theta=\pi_L^a$ if and only if $a\mid p-1$ is established under the only assumption that $L/K$ is typical. Consequently, if $a\nmid p-1$, then $\mathfrak{A}_{L/K}\neq\mathfrak{A}_{\theta}$, but $\mathcal{O}_L$ could still be $\mathfrak{A}_{L/K}$-free. Namely, we may have that $\mathfrak{A}_{\theta}=\mathfrak{A}_{L/K}$ for some other choice of an element $\theta\in\mathcal{O}_L$ that generates an $H$-normal basis for $L$. On the other hand, the equivalence above is consistent with Theorem \ref{maintheorem} \ref{mainthm4}, because of the following characterization for $a\mid p-1$ in terms of the continued fraction expansion of $\frac{a}{p}$:

\begin{pro}\label{pro:charactatheta} Assume the notation above and let $n$ be the length of the continued fraction expansion of $\frac{b}{p}$. Then $a\mid p-1$ if and only if $n\leq 2$.
\end{pro}
\begin{proof}
We can assume without loss of generality that $b=a$, since the continued fraction expansions $\frac{a}{p}$ and $\frac{b}{p}$ have the same coefficients except the integer part, and in particular the same length. Suppose that $n\leq 2$. If $n=1$, then $\frac{a}{p}=[0;a_1]=\frac{1}{a_1}$, so $a=1\mid p-1$. Otherwise $n=2$, and we have that $\frac{a}{p}=[0;a_1,a_2]=\frac{a_2}{a_1a_2+1}$ with $a_2$ and $a_1a_2+1$ coprime, so $q_2=a_1a_2+1$. Since $n=2$ we have that $q_2=p$, so necessarily $a_2=a$ and $a\mid p-1$. Conversely, assume that $a\mid p-1$. If $a=1$, then $\frac{a}{p}=\frac{1}{p}=[0;p]$ and $n=1$. Otherwise, since $\frac{p-1}{a}\in\mathbb{Z}$, we can write $$\frac{a}{p}=\cfrac{1}{\frac{p-1}{a}+\cfrac{1}{a}}=\Big[0;\frac{p-1}{a},a\Big]$$ and $n=2$.
\end{proof}

\begin{rmk}\normalfont In fact, we have proved that:
\begin{itemize}
    \item $n=1$ if and only if $a=1$.
    \item $n=2$ if and only if $a\neq1$ and $a\mid p-1$.
\end{itemize}
\end{rmk}

To sum up, we have proved the following:

\begin{pro} Let $L/K$ be a degree $p$ extension of $p$-adic fields with ramification jump $\ell$, let $a$ be the residue class of $\ell$ mod $p$ and let $\theta=\pi_L^a$. The following statements are equivalent:
\begin{enumerate}[label=\arabic*)]
    \item $\mathfrak{A}_{\theta}=\mathfrak{A}_{L/K}$.
    \item $d(i)=w(i)$ for all $0\leq i\leq p-1$.
    \item The length of the continued fraction expansion of $\frac{a}{p}$ is at most $2$.
    \item $a\mid p-1$.
\end{enumerate}
\end{pro}

\section{The almost maximally ramified case}\label{sec:freecontfrac}

This section is devoted to prove Theorem \ref{maintheorem} \ref{mainthm4}. Namely, let $L/K$ be a degree $p$ extension of $p$-adic fields such that $\frac{pe}{p-1}-1\leq\ell\leq\frac{pe}{p-1}$. The case that $\ell=\frac{pe}{p-1}$ corresponds to when $L/K$ is maximally ramified and has been already discussed in Section \ref{sec:maxram}, where we obtained that $\mathcal{O}_L$ is $\mathfrak{A}_{L/K}$-free. Moreover, recall that $\ell<\frac{pe}{p-1}$ if and only if $a\neq0$, that is, $L/K$ is typical. Hence, we can assume that $L/K$ is typical and $$\frac{pe}{p-1}-1\leq\ell<\frac{pe}{p-1}.$$

Recall from Propositions \ref{pro:charactequality} and \ref{pro:charactatheta} that $\mathfrak{A}_{\theta}=\mathfrak{A}_{L/K}$ with $\theta=\pi_L^a$ if and only if $n\leq 2$. This equivalence is useless to prove Theorem \ref{maintheorem} \ref{mainthm4}. We do not proceed, however, to study such an equality for other choices of $H$-normal basis generators $\theta$. Instead, we keep considering $\mathfrak{A}_{\theta}$ with $\theta=\pi_L^a$ and use Proposition \ref{pro:hnormalfreeness} \ref{pro:hnormalfreeness4} and \ref{pro:hnormalfreeness5}, by which $\mathcal{O}_L$ is $\mathfrak{A}_{L/K}$-free if and only if $\mathfrak{A}_{\theta}$ is principal as an $\mathfrak{A}_{L/K}$-fractional ideal for this particular choice of $\theta$.

\subsection{The strategy of proof}

Let $\theta=\pi_L^a$ and for each $\alpha\in\mathfrak{A}_{\theta}$, let us consider the map \begin{equation}\label{eq:psialpha}
    \begin{array}{rccl}
    \psi_{\alpha}\colon & \mathfrak{A}_{L/K} & \longrightarrow & \mathfrak{A}_{\theta}, \\
     & \lambda & \longmapsto & \lambda\alpha.
\end{array}
\end{equation} If we fix $\mathcal{O}_K$-bases of $\mathfrak{A}_{L/K}$ and $\mathfrak{A}_{\theta}$ and call $M(\alpha)$ the matrix of $\psi_{\alpha}$ with respect to these bases, then $\mathfrak{A}_{\theta}$ is $\mathfrak{A}_{L/K}$-principal with generator $\alpha$ if and only if \begin{equation}\label{eq:detMalpha}
    \mathrm{det}(M(\alpha))\not\equiv0\,(\mathrm{mod}\,\mathfrak{p}_K).
\end{equation} If we find some $\alpha\in\mathfrak{A}_{\theta}$ satisfying \eqref{eq:detMalpha}, then $\mathcal{O}_L$ is $\mathfrak{A}_{L/K}$-free. On the contrary, if we prove that there is no $\alpha\in\mathfrak{A}_{\theta}$ with this property, then $\mathcal{O}_L$ is not $\mathfrak{A}_{L/K}$-free.

Let us fix the already known bases: $\{\pi_L^{-w(i)}\Psi^i\}_{i=0}^{p-1}$ for $\mathfrak{A}_{L/K}$ and $\{\pi_L^{-d(i)}\Psi^i\}_{i=0}^{p-1}$ for $\mathfrak{A}_{\theta}$ (see Corollary \ref{coro:scaffoldp} \ref{coro:scaffoldp1} and Proposition \ref{pro:intbasis}, respectively). Let $\alpha\in\mathfrak{A}_{\theta}$, and write $\alpha=\sum_{k=0}^{p-1}x_k\pi_K^{-d(k)}\Psi^k$ with $x_k\in\mathcal{O}_K$. Then, $$M(\alpha)=\sum_{k=0}^{p-1}x_kM(\pi_K^{-d(k)}\Psi^k).$$ Let us denote $M(\pi_K^{-d(k)}\Psi^k)=(\mu_{j,i}^{(k)})_{j,i=0}^{p-1}$ for every $0\leq k\leq p-1$. 

By definition of matrix attached to a linear map, $$\Psi_{\pi_K^{-d(k)}\Psi^k}(\pi_K^{-w(i)}\Psi^i)=\sum_{j=0}^{p-1}\mu_{j,i}^{(k)}\pi_K^{-d(j)}\Psi^j.$$ On the other hand, using the definition of the map $\Psi_{\pi_K^{-d(k)}\Psi^k}$: $$\Psi_{\pi_K^{-d(k)}\Psi^k}(\pi_K^{-w(i)}\Psi^i)=\pi_K^{-d(k)-w(i)}\Psi^{k+i}.$$ Using the uniqueness of coordinates, we see that \begin{equation}\label{eq:coordbasis}
    \mu_{j,i}^{(k)}=C(\Psi^j,\Psi^{k+i})\pi_K^{d(j)-d(k)-w(i)},
\end{equation} where $C(\Psi^j,\Psi^{k+i})$ is the coefficient of $\Psi^j$ in the expression of $\Psi^{k+i}$ with respect to the $K$-basis $\{\Psi^l\}_{l=0}^{p-1}$ of $H$. Its $K$-valuation is determined trivially if $k+i\leq p-1$ and by the equation $\Psi^p=\lambda\Psi$ from Corollary \ref{coro:eqdegp} otherwise. 

We shall determine whether or not the entries $\mu_{j,i}^{(k)}$ vanish mod $\mathfrak{p}_K$. We shall denote such a projection by $\overline{\mu}_{j,i}^{(k)}$. Looking at \eqref{eq:coordbasis}, this amounts to characterize whether $d(j)-d(k)-w(i)$ is equal to $0$ or not. In the case that $L/K$ is Galois, this is done by establishing a link between the numbers $w(i)$, the fractional part of the numbers $h\frac{a}{p}$, where $1\leq h\leq p-1$, and the semiconvergents of the continued fraction expansion of $\frac{b}{p}$ (see \cite[Chapitre II, \textsection 5]{fertonthesis}). In \cite[Section 7]{gildegpdihedral}, this strategy is adapted to the case in which the normal closure of $L/K$ is dihedral of degree $2p$. In what follows, we adapt it to the general case. 

Even if some results and notions at the following sections are rather expository, we include them with complete proofs and explanations for the benefit of the reader.

\subsection{Semiconvergents and fractional parts}

Let us consider the following assignment: for each integer $1\leq h\leq p$, we consider the fractional part $\widehat{h\frac{a}{p}}$, which is a rational number between $0$ and $1$ with denominator (of the fraction in irreducible form) dividing $p$. This map can be visualized graphically by fixing in the real plane a circle $C$ with circumference $1$ and centered at the origin. Let $O$ be the point of $C$ over the $X$-axis in the right half semiplane. The fractional part $\widehat{h\frac{a}{p}}$ is identified with the point $M_h$ in $C$ with the property that $\widehat{h\frac{a}{p}}$ is the length of the arc from $O$ to $M_h$ in anticlockwise direction (see Figure \ref{fig1}). We can formalize the notion of distance between two of these points as follows.

\begin{defi} We define the distance within $C$ between two integers $h,k\in\mathbb{Z}$ as \begin{equation}\label{eq:distanceint}
    d_C(h,k)=\Big|\Big|(h-k)\frac{a}{p}\Big|\Big|=\Big|\widehat{h\frac{a}{p}}-\widehat{k\frac{a}{p}}\Big|
    \end{equation} (see Figure \ref{fig2}). If $\widehat{k\frac{a}{p}}<\widehat{h\frac{a}{p}}$, $d_C(h,k)$ is the length of the arc from $M_k$ to $M_h$ in anticlockwise direction.
\end{defi}

We can also formalize the situation in which a point of $C$ lies between other two.

\begin{defi} We say that a point $M_h$ of $C$ lies between other two points $M_{h'}$, $M_{h''}$ with $\widehat{h'\frac{a}{p}}<\widehat{h''\frac{a}{p}}$ if lies in the arc in anticlockwise direction from $M_{h'}$ to $M_{h''}$, that is, $\widehat{h'\frac{a}{p}}\leq\widehat{h\frac{a}{p}}\leq\widehat{h''\frac{a}{p}}$ (see Figure \ref{fig3}).
\end{defi}

Note that we include in this definition the situation that $M_h$ is either of the points $M_{h'}$ or $M_{h''}$.

\begin{figure}[h!]
\begin{minipage}{0.3 \textwidth}
    \centering
    \begin{tikzpicture}
        \draw[opacity=0.7] (-2,0) -- (2,0);
        \draw[opacity=0.7] (0,-2) -- (0,2);
        \draw (0,0) circle (1.5);

        \draw[line width=0.5mm] (1.5,0) arc (0:60:1.5) node at (30:2) {$\widehat{h\frac{a}{p}}$};
        
        \fill (1.5,0) circle (0.1) node [below right] {$O$};
        \fill (60:1.5) circle (0.1) node [above right] {$M_h$};

    \end{tikzpicture}
    \caption{Point $M_h$ \\ assigned to $h\in\mathbb{Z}$.}
    \label{fig1}
\end{minipage}
\begin{minipage}{0.3 \textwidth}
    \centering
    \begin{tikzpicture}
        \draw[opacity=0.7] (-2,0) -- (2,0);
        \draw[opacity=0.7] (0,-2) -- (0,2);
        \draw (0,0) circle (1.5);

        \draw[line width=0.5mm] (60:1.5) arc (60:160:1.5) node at (110:2) {$d_C(h,k)$};

        \fill (1.5,0) circle (0.1) node [below right] {$O$};
        \fill (60:1.5) circle (0.1) node [above right] {$M_k$};
        \fill (160:1.5) circle (0.1) node [above left] {$M_h$};
    
    \end{tikzpicture}
    \caption{The distance \\ between $M_h$ and $M_k$.}
    \label{fig2}
\end{minipage}
\begin{minipage}{0.3 \textwidth}
    \centering
    \begin{tikzpicture}
        \draw[opacity=0.7] (-2,0) -- (2,0);
        \draw[opacity=0.7] (0,-2) -- (0,2);
        \draw (0,0) circle (1.5);

        \draw[line width=0.5mm] (50:1.5) arc (50:190:1.5);

        \fill (1.5,0) circle (0.1) node [below right] {$O$};
        \fill (50:1.5) circle (0.1) node [above right] {$M_{h'}$};
        \fill (120:1.5) circle (0.1) node [above left] {$M_h$};
        \fill (190:1.5) circle (0.1) node [left] {$M_{h''}$};
        
    \end{tikzpicture}
    \caption{Point $M_h$ \\ between $M_{h'}$ and $M_{h''}$.}
    \label{fig3}
\end{minipage}

\end{figure}

Let us introduce the following set: 
\begin{equation}\label{eq:setE}
E=\Big\{h\in\mathbb{Z}\,\Big|\,1\leq h< p,\,1\leq h'<h\,\Longrightarrow\,\widehat{h'\frac{a}{p}}>\widehat{h\frac{a}{p}}\Big\}.
\end{equation}

The usefulness of the set $E$ for our purposes is illustrated by the following result, that shows a description of the numbers $w(i)$ depending on whether or not $p-i\in E$.

\begin{lema}\label{lem:nuiE} Let $a_0=\lfloor\frac{b}{p}\rfloor$. For each $0\leq i\leq p-1$, we have that $w(i)=ia_0+\Big\lfloor i\frac{a}{p}\Big\rfloor+\epsilon$, where $\epsilon=0$ if $p-i\notin E$ and $\epsilon=1$ if $p-i\in E$.
\end{lema}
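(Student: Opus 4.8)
The plan is to unwind the definition $n_i=\min_{0\le j\le p-1-i}(\nu_{i+j}-\nu_j)$ and rewrite each difference $\nu_{i+j}-\nu_j$ in terms of fractional parts of multiples of $\frac{a}{p}$, so that the minimum becomes a combinatorial condition about which $j$ minimizes a certain fractional part. First I would use the identity $\nu_m=\lfloor\frac{a+m\ell}{p}\rfloor$ together with $\ell=pa_0+a$ (writing $\frac{\ell}{p}=[a_0;a_1,\dots,a_n]$, so $\lfloor\ell/p\rfloor=a_0$) to get $\nu_m=ma_0+\lfloor (m+1)\frac{a}{p}\rfloor$ for $0\le m\le p-1$; indeed $\nu_m=\lfloor\frac{a}{p}+m a_0+m\frac{a}{p}\rfloor=ma_0+\lfloor (m+1)\frac{a}{p}\rfloor$. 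Consequently
\[
\nu_{i+j}-\nu_j=ia_0+\Big\lfloor (i+j+1)\tfrac{a}{p}\Big\rfloor-\Big\lfloor (j+1)\tfrac{a}{p}\Big\rfloor,
\]
and using $\lfloor x+y\rfloor=\lfloor x\rfloor+\lfloor y\rfloor+[\widehat x+\widehat y\ge 1]$ with $x=i\frac{a}{p}$, $y=(j+1)\frac{a}{p}$ this equals $ia_0+\lfloor i\frac{a}{p}\rfloor+\delta_j$, where $\delta_j\in\{0,1\}$ is $1$ exactly when $\widehat{i\frac ap}+\widehat{(j+1)\frac ap}\ge 1$. Thus $n_i=ia_0+\lfloor i\frac ap\rfloor+\epsilon$ with $\epsilon=\min_{0\le j\le p-1-i}\delta_j\in\{0,1\}$, and it remains to identify $\epsilon$.

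The core of the argument is then: $\epsilon=0$ if and only if there exists $j$ with $0\le j\le p-1-i$ such that $\widehat{(j+1)\frac ap}<1-\widehat{i\frac ap}=\widehat{(p-i)\frac ap}$ (using $i\frac ap+(p-i)\frac ap=a\in\mathbb Z$ and $0<\widehat{i\frac ap}<1$, which holds since $1\le i\le p-1$ and $\gcd(a,p)=1$; the case $i=0$ is trivial with $\epsilon=0$). Setting $h=p-i$, the range $1\le j+1\le p-i=h$ means we are asking whether some $h'$ with $1\le h'<h$ (taking $h'=j+1$, noting $h'=h$ is excluded because $j\le p-1-i$ gives $j+1\le p-i=h$, but $j+1=h$ would force $i+j=p-1$, still allowed — so actually $1\le h'\le h$; one must check the boundary $h'=h$ contributes $\widehat{h\frac ap}\not<\widehat{h\frac ap}$, hence is harmless) satisfies $\widehat{h'\frac ap}<\widehat{h\frac ap}$. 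Negating: $\epsilon=1$ iff for all $1\le h'<h$ one has $\widehat{h'\frac ap}>\widehat{h\frac ap}$ (strict, since distinct multiples of $\frac ap$ have distinct nonzero fractional parts), which is precisely the condition $h\in E$ from \eqref{eq:setE}. Reading this back as $p-i\in E$ versus $p-i\notin E$ gives the claim.

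The main obstacle I anticipate is bookkeeping at the boundary of the index range — being careful that $j$ runs over $0\le j\le p-1-i$ translates to $h'=j+1$ running over $1\le h'\le p-i=h$, and checking that including or excluding the endpoint $h'=h$ does not change the outcome (it cannot, since the defining inequality is strict and $\widehat{h\frac ap}<\widehat{h\frac ap}$ is false). A secondary point requiring care is justifying $0<\widehat{m\frac ap}<1$ for the relevant $m$, i.e. that $p\nmid m a$, which follows from $\gcd(a,p)=1$ (because $a\neq 0$ as $L/K$ is typical) and $1\le m\le p-1$; this is what makes the strict inequalities and the complementation $\widehat{i\frac ap}+\widehat{(p-i)\frac ap}=1$ valid. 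Everything else is the routine floor-function manipulation sketched above.
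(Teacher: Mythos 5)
Your proposal is correct: the reduction $\nu_m=ma_0+\lfloor(m+1)\tfrac{a}{p}\rfloor$, the carry term $\delta_j=[\widehat{i\tfrac{a}{p}}+\widehat{(j+1)\tfrac{a}{p}}\ge 1]$, and the translation of $\min_j\delta_j$ into the condition defining $E$ via $1-\widehat{i\tfrac{a}{p}}=\widehat{(p-i)\tfrac{a}{p}}$ (with the boundary $h'=h$ and the case $i=0$ handled correctly) is exactly the floor/fractional-part argument that the paper delegates to the cited proof of the dihedral-case lemma. So this is essentially the same approach, with the added merit of being written out self-contained.
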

\begin{proof}
Looking at \eqref{eq:rewritedi} and the definition of $w(i)$, we see that $$w(i)=ia_0+\mathrm{min}_{0\leq j\leq p-1-i}\Big(\Big\lfloor(i+j+1)\frac{a}{p}\Big\rfloor-\Big\lfloor(j+1)\frac{a}{p}\Big\rfloor\Big).$$ For $i=0$, we have $w(0)=0$ and $\epsilon=0$ because $p\notin E$. Thus, we assume that $i\neq 0$ in the sequel.

Given $0\leq j\leq p-1-i$, \begin{align*}
    (i+j+1)\frac{a}{p}-(j+1)\frac{a}{p}&=\Big\lfloor(i+j+1)\frac{a}{p}\Big\rfloor-\Big\lfloor(j+1)\frac{a}{p}\Big\rfloor+\widehat{(i+j+1)\frac{a}{p}}-\widehat{(j+1)\frac{a}{p}}, \\
    (i+j+1)\frac{a}{p}-(j+1)\frac{a}{p}&=i\frac{a}{p}=\Big\lfloor i\frac{a}{p}\Big\rfloor+\widehat{i\frac{a}{p}}.
\end{align*} Hence, $$\Big\lfloor(i+j+1)\frac{a}{p}\Big\rfloor-\Big\lfloor(j+1)\frac{a}{p}\Big\rfloor=\Big\lfloor i\frac{a}{p}\Big\rfloor+\widehat{i\frac{a}{p}}+\widehat{(j+1)\frac{a}{p}}-\widehat{(i+j+1)\frac{a}{p}}.$$ This is an integer number, so $$\Big\lfloor(i+j+1)\frac{a}{p}\Big\rfloor-\Big\lfloor(j+1)\frac{a}{p}\Big\rfloor=\begin{cases}
\lfloor i\frac{a}{p}\rfloor+1 & \hbox{if }\widehat{i\frac{a}{p}}+\widehat{(j+1)\frac{a}{p}}>1, \\
\lfloor i\frac{a}{p}\rfloor & \hbox{if }\widehat{i\frac{a}{p}}+\widehat{(j+1)\frac{a}{p}}\leq1. \\
\end{cases}$$ Note that $\widehat{i\frac{a}{p}}+\widehat{(j+1)\frac{a}{p}}>1$ if and only if $\widehat{(j+1)\frac{a}{p}}>\widehat{(p-i)\frac{a}{p}}$.

Suppose that $p-i\in E$. Then $\widehat{(j+1)\frac{a}{p}}\geq\widehat{(p-i)\frac{a}{p}}$ for every $0\leq j\leq p-1-i$ (with equality if and only if $j=p-i-1$), and in particular for the value of $j$ such that $\Big\lfloor(i+j+1)\frac{a}{p}\Big\rfloor-\Big\lfloor(j+1)\frac{a}{p}\Big\rfloor$ is minimal. Then $w(i)=ia_0+\lfloor i\frac{a}{p}\rfloor+1$. Otherwise, if $p-i\notin E$, then there is some $0\leq j<p-i-1$ such that $\widehat{(j+1)\frac{a}{p}}\leq\widehat{(p-i)\frac{a}{p}}$. From the previous calculations we deduce that $\Big\lfloor(i+j+1)\frac{a}{p}\Big\rfloor-\Big\lfloor(j+1)\frac{a}{p}\Big\rfloor=\lfloor i\frac{a}{p}\rfloor$, which is a minimum for this value of $j$. Then $w(i)=ia_0+\Big\lfloor i\frac{a}{p}\Big\rfloor$.
\end{proof}

This is stated originally at \cite[Proposition 1]{bertrandiasbertrandiasferton} and proved at \cite[Chapitre II, Proposition 5]{fertonthesis}. The proof is also replicated at \cite[Lemma 7.1]{gildegpdihedral}.

Recall from Section \ref{sec:contfrac} that we refer to the denominators $q$ of the convergents (resp. semiconvergents) $\frac{p}{q}$ of the continued fraction expansion of $\frac{b}{p}$ as convergents (resp. semiconvergents). The link between the map $h\mapsto M_h$ above and the continued fraction expansion of $\frac{b}{p}$ lies in the following result.

\begin{pro}\label{pro:paramE} Write $\frac{b}{p}=[a_0;a_1,\dots,a_n]$ for the continued fraction expansion of $\frac{b}{p}$. The elements of the set $E$ are the even-indexed semiconvergents $$q_{2i,r_{2i}}=r_{2i}q_{2i+1}+q_{2i},\quad0\leq i<\frac{n-1}{2}.$$ Concretely: 
\begin{equation*}
    \begin{split}
        E=\Big\{q_{2i,r_{2i}}\,\Big|\,0\leq i<\frac{n-1}{2},\,r_{2i}\in\mathbb{Z},&\,0\leq r_{2i}<a_{2i+2}\hbox{ if }i\neq\frac{n-3}{2},\\&\,0\leq r_{2i}\leq a_{2i+2}\hbox{ if }i=\frac{n-3}{2}\Big\}.
    \end{split}
\end{equation*}
\end{pro}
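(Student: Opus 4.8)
The plan is to characterize the set $E$ from \eqref{eq:setE} directly in terms of the continued fraction expansion of $\frac{a}{p}$, since $E$ depends only on the fractional parts $\widehat{h\frac{a}{p}}$ and hence is unchanged if we replace $\ell$ by $a$. First I would restate the defining condition: $h\in E$ precisely when $h$ is a \emph{record minimum} for the sequence $h\mapsto\widehat{h\frac{a}{p}}$, i.e. $\widehat{h\frac{a}{p}}<\widehat{h'\frac{a}{p}}$ for all $1\leq h'<h$. The key arithmetic input is Proposition \ref{pro:contfrac}: part \eqref{pro:contfrac2} says that any $q<q_i$ has $\lVert q\frac{a}{p}\rVert>\lVert q_i\frac{a}{p}\rVert$, and part \eqref{pro:contfrac3} says that $\widehat{q_i\frac{a}{p}}<\frac12$ exactly when $i$ is even. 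Since for an argument $q$ with $\widehat{q\frac{a}{p}}<\frac12$ we have $\widehat{q\frac{a}{p}}=\lVert q\frac{a}{p}\rVert$, whereas for $\widehat{q\frac{a}{p}}>\frac12$ we have $\widehat{q\frac{a}{p}}=1-\lVert q\frac{a}{p}\rVert>\frac12$, the record minima of the fractional part must all lie among arguments whose fractional part is below $\frac12$, i.e. the even-indexed convergents $q_{2i}$ and — crucially — the even-indexed \emph{semiconvergents} $q_{2i,r}=rq_{2i+1}+q_{2i}$.

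The main work is then a careful monotonicity analysis of the fractional parts $\widehat{q_{2i,r}\frac{a}{p}}$ as $i$ increases and, within a fixed $i$, as $r$ runs from $0$ to $a_{2i+2}$. I would use the standard identity $\widehat{q_{2i,r}\frac{a}{p}}=\widehat{q_{2i}\frac{a}{p}}-r\lVert q_{2i+1}\frac{a}{p}\rVert$ (valid as long as the right side stays positive), which shows that within a block the fractional parts \emph{strictly decrease} in $r$, so every $q_{2i,r}$ with $0\le r$ in the allowed range is indeed a new record minimum, until we reach $q_{2i,a_{2i+2}}=q_{2i+2}$ and pass to the next block; this matches the recursion $q_{2i+2}=a_{2i+2}q_{2i+1}+q_{2i}$ and the semiconvergent range $0\le r_{2i}<a_{2i+2}$ stated in the proposition (with the closed endpoint $r_{2i}\le a_{2i+2}$ permitted only for the last block $i=\frac{n-3}{2}$, because there $q_{2i+2}=q_{n-1}<p$ is still a valid argument $<p$, whereas in earlier blocks the endpoint $q_{2i+2}$ is simply the start of the next block and is counted once). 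I would also invoke Proposition \ref{pro:contfrac}\eqref{pro:contfrac4} to pin down the last record: $\widehat{q_{n-1}\frac{a}{p}}=\frac1p$ when $n$ is odd, which is the absolute minimum over $1\le h<p$, so no argument beyond $q_{n-1}$ (and no argument $\ge p$) contributes, confirming that the index $i$ ranges over $0\le i<\frac{n-1}{2}$.

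The converse inclusion — that an $h\in E$ must be one of the listed semiconvergents — follows from Proposition \ref{pro:contfrac}\eqref{pro:contfrac2}: if $h$ is a record minimum and $q_{2i}\le h<q_{2i+2}$ for the appropriate even index, then comparing $\lVert h\frac{a}{p}\rVert$ with the values at $q_{2i},q_{2i+1},q_{2i,1},\dots$ forces $h$ to be exactly an even semiconvergent $q_{2i,r}$; any other value in that range either has fractional part above $\frac12$ (hence is beaten by $q_{2i}$) or, if below $\frac12$, its distance-to-nearest-integer exceeds that of the preceding semiconvergent. I expect the \textbf{main obstacle} to be the bookkeeping at the block boundaries and at the top end: getting the half-open versus closed interval for $r_{2i}$ exactly right (the special role of $i=\frac{n-3}{2}$), handling the parity of $n$ uniformly, and making sure the identity $\widehat{q_{2i,r}\frac{a}{p}}=\widehat{q_{2i}\frac{a}{p}}-r\lVert q_{2i+1}\frac{a}{p}\rVert$ genuinely holds throughout the allowed range of $r$ (i.e. the right-hand side never goes negative before $r=a_{2i+2}$), which is where parts \eqref{pro:contfrac1} and \eqref{pro:contfrac3} of Proposition \ref{pro:contfrac} are used. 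Since this is essentially a translation of the dihedral argument, I would model the write-up on the proof of the analogous statement for $r=2$ and reference Proposition \ref{pro:contfrac} at each comparison step.
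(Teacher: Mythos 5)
You should note at the outset that the paper does not give a self-contained proof of Proposition \ref{pro:paramE}: it defers to \cite[Lemme 1]{bertrandiasbertrandiasferton}, \cite[Chapitre II, Lemme 4]{fertonthesis} and \cite[Section 7.1]{gildegpdihedral}, and only summarizes the technique (fractional parts as arc lengths on a circle, the distance $d(h,k)=\lVert (h-k)\tfrac{a}{p}\rVert$, and Lemma \ref{lemadensity}). Your plan is that same argument in analytic rather than geometric language: the identity $\widehat{q_{2i,r}\tfrac{a}{p}}=\widehat{q_{2i}\tfrac{a}{p}}-r\lVert q_{2i+1}\tfrac{a}{p}\rVert$ plus comparisons of an arbitrary $h$ with neighbouring semiconvergents via Proposition \ref{pro:contfrac}\eqref{pro:contfrac2} is exactly what Lemma \ref{lemadensity} packages. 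However, one step of your reduction is false as stated: record minima of $h\mapsto\widehat{h\tfrac{a}{p}}$ need \emph{not} have fractional part below $\tfrac12$. Proposition \ref{pro:contfrac}\eqref{pro:contfrac3} is stated only for $i>0$, and in the block $i=0$ nothing forces smallness: if $a>p/2$ (equivalently $a_1=1$), the first-block semiconvergents $q_{0,r}=rq_1+q_0=r+1$ satisfy $\widehat{q_{0,r}\tfrac{a}{p}}=\tfrac{(r+1)a-rp}{p}$, which can stay above $\tfrac12$ for many $r$, and all of these genuinely belong to $E$; in the extreme case $a=p-1$ one has $E=\{1,\dots,p-1\}$ while every fractional part exceeds $\tfrac12$. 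Consequently both your forward reduction (``records lie among arguments with fractional part below $\tfrac12$'') and the first branch of your converse argument (``fractional part above $\tfrac12$, hence beaten by $q_{2i}$'') break down precisely for $i=0$, where $\widehat{q_0\tfrac{a}{p}}=\tfrac{a}{p}$ may exceed $\tfrac12$.

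The gap is localized and repairable without new ideas: drop the $\tfrac12$ dichotomy and argue uniformly on differences. If $q_{2i,r}<h<q_{2i,r+1}$ and $\widehat{h\tfrac{a}{p}}\le\widehat{q_{2i,r}\tfrac{a}{p}}$, then $0<h-q_{2i,r}<q_{2i+1}$ and $\lVert (h-q_{2i,r})\tfrac{a}{p}\rVert\le\widehat{q_{2i,r}\tfrac{a}{p}}-\widehat{h\tfrac{a}{p}}<\lVert q_{2i}\tfrac{a}{p}\rVert$, contradicting Proposition \ref{pro:contfrac}\eqref{pro:contfrac2}; for $i=0$ with $a>p/2$ there is nothing to check, since then $q_1=1$ and no integer lies strictly between consecutive first-block semiconvergents, while for $i\geq1$, and for $i=0$ with $a<p/2$, one indeed has $\widehat{q_{2i}\tfrac{a}{p}}=\lVert q_{2i}\tfrac{a}{p}\rVert$ so the displayed chain is valid. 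This difference comparison, together with the positivity $\widehat{q_{2i}\tfrac{a}{p}}-r\lVert q_{2i+1}\tfrac{a}{p}\rVert\geq\widehat{q_{2i+2}\tfrac{a}{p}}>0$ coming from $q_{2i+2}=a_{2i+2}q_{2i+1}+q_{2i}$, yields both inclusions (each listed semiconvergent beats everything before it, and every non-semiconvergent is beaten by the semiconvergent just below it), and your bookkeeping of the endpoint $r_{2i}\le a_{2i+2}$ for the last even block when $n$ is odd is correct. With that single step replaced, your outline matches the proof the paper cites.
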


In other words, the elements of $E$ are all the even-indexed semiconvergents (including convergents) from $q_0=1$ to $q_{n-1}$ if $n$ is odd and to $q_n$ if $n$ is even. This result is stated at \cite[Lemme 1]{bertrandiasbertrandiasferton} and a complete proof can be found at \cite[Chapitre II, Lemme 4]{fertonthesis} (the reader can also consult \cite[Section 7.1]{gildegpdihedral}). Let us go over this proof and the techniques that are used. 

Prior to the proof of Proposition \ref{pro:paramE}, we shall establish the disposition of the points of $C$ corresponding to the even-indexed convergents and semiconvergents $q_{2i,r}$, $0\leq r\leq a_{2i+2}$.

\begin{pro}\label{pro:arcsemiconv} Let $0\leq i<\frac{n-1}{2}$. The points of $C$ corresponding to the semiconvergents between $q_{2i}$ and $q_{2i+2}$ lie in the arc between $M_{q_{2i+2}}$ and $M_{q_{2i}}$, in such a way that for each $1\leq r\leq a_{2i+2}$, $M_{q_{2i,r-1}}$ and $M_{q_{2i,r}}$ are consecutive points with distance $||q_{2i+1}\frac{a}{p}||$ from each other.
\end{pro}
\begin{figure}[h!]
    \centering
    \begin{tikzpicture}
        \draw (30:7) arc (30:90:7);
        
        \draw[line width=0.5mm] (30:7) arc (30:42:7);
        \draw[line width=0.5mm] (66:7) arc (66:78:7);
        \draw[line width=0.5mm] (78:7) arc (78:90:7);
        
        \fill (30:7) circle (0.1) node [above right] {$M_{q_{2i+2}}$};
        \fill (42:6.3) node [below] {$||q_{2i+1}\frac{a}{p}||$};
        \fill (42:7) circle (0.1) node [above right] {$M_{q_{2i,a_{2i+2}-1}}$};
        \fill (66:7) circle (0.1) node [above right] {$M_{q_{2i,2}}$};
        \fill (76:6.7) node [below] {$||q_{2i+1}\frac{a}{p}||$};
        \fill (78:7) circle (0.1) node [above right] {$M_{q_{2i,1}}$};
        \fill (90:6.7) node [below] {$||q_{2i+1}\frac{a}{p}||$};
        \fill (90:7) circle (0.1) node [above right] {$M_{q_{2i}}$};
        
    \end{tikzpicture}
    \caption{Arc within the circle $C$ between points $M_{q_{2i}}$, $M_{q_{2i+2}}$.}
    \label{fig5}
\end{figure}
\begin{proof}
We consider the statements at Proposition \ref{pro:contfrac}. Since $q_{2i}$ and $q_{2i+2}$ have even subscripts, by \ref{pro:contfrac3}, we have that $\widehat{q_{2i}\frac{a}{p}}$ and $\widehat{q_{2i+2}\frac{a}{p}}$ are lower than $\frac{1}{2}$, so $||q_{2i}\frac{a}{p}||=\widehat{q_{2i}\frac{a}{p}}$ and $||q_{2i+2}\frac{a}{p}||=\widehat{q_{2i+2}\frac{a}{p}}$. By \ref{pro:contfrac1}, $\widehat{q_{2i}\frac{a}{p}}>\widehat{q_{2i+2}\frac{a}{p}}$.

As a previous step, we show that $M_{q_{2i,r}}$ lies between $M_{q_{2i+2}}$ and $M_{q_{2i}}$ for every $0\leq r\leq a_{2i+2}$. Since $q_{2i}<q_{2i,r}$, \ref{pro:contfrac2} yields $\widehat{q_{2i,r}\frac{a}{p}}<\widehat{q_{2i}\frac{a}{p}}$. Likewise, since $q_{2i,r}<q_{2i+2}$, $||q_{2i,r}\frac{a}{p}||>\widehat{q_{2i+2}\frac{a}{p}}$. But from the already proved inequality we deduce that $\widehat{q_{2i,r}\frac{a}{p}}<\frac{1}{2}$, so $\widehat{q_{2i,r}\frac{a}{p}}=||q_{2i,r}\frac{a}{p}||$.

Now, let $1\leq r\leq a_{2i+2}$. We have $$q_{2i,r}-q_{2i,r-1}=q_{2i+1}\quad\Longrightarrow\quad\Big|\Big|(q_{2i,r}-q_{2i,r-1})\frac{a}{p}\Big|\Big|=\Big|\Big|q_{2i+1}\frac{a}{p}\Big|\Big|,$$ proving that $d_C(q_{2i,r},q_{2i,r-1})=||q_{2i+1}\frac{a}{p}||$.

It remains to prove that $\widehat{q_{2i,r}\frac{a}{p}}<\widehat{q_{2i,r-1}\frac{a}{p}}$. We have $$\widehat{q_{2i,r}\frac{a}{p}}=\widehat{(q_{2i+1}+q_{2i,r-1})\frac{a}{p}}.$$ If we prove that $\widehat{q_{2i+1}\frac{a}{p}}+\widehat{q_{2i,r-1}\frac{a}{p}}>1$, then we obtain that $\widehat{q_{2i,r}\frac{a}{p}}=\widehat{q_{2i+1}\frac{a}{p}}+\widehat{q_{2i,r-1}\frac{a}{p}}-1<\widehat{q_{2i,r-1}\frac{a}{p}}$, obtaining the desired inequality. Now, the inequality $\widehat{q_{2i+1}\frac{a}{p}}+\widehat{q_{2i,r-1}\frac{a}{p}}>1$ is equivalent to $\widehat{q_{2i,r-1}\frac{a}{p}}>1-\widehat{q_{2i+1}\frac{a}{p}}=||q_{2i+1}\frac{a}{p}||$, the last equality due to \ref{pro:contfrac3}. Since $M_{q_{2i,r-1}}$ lies between $M_{q_{2i+2}}$ and $M_{q_{2i}}$, $\widehat{q_{2i,r-1}\frac{a}{p}}=d_C(q_{2i,r-1},0)>d_C(q_{2i,r-1},q_{2i,r})=||q_{2i+1}\frac{a}{p}||$, as we wanted. 
\end{proof}

With this, we can prove the following technical but useful result:

\begin{lema}\label{lem:density} Let $h\in\mathbb{Z}$, $1\leq h<p$. If $M_h$ lies between $M_{q_{2i+2}}$ and $M_{q_{2i}}$ for some $0\leq i<\frac{n-1}{2}$, there is $0\leq r\leq a_{2i+2}$ such that $d_C(h,q_{2i,r})<||q_{2i+1}\frac{a}{p}||$. If in addition $|h-q_{2i,r}|<q_{2i+2}$, then $h=q_{2i,r}$.
\end{lema}
\begin{proof}
By Proposition \ref{pro:arcsemiconv}, there is some $0\leq r<a_{2i+2}$ such that $M_h$ lies between $M_{q_{2i,r}}$ and $M_{q_{2i,r+1}}$. We can assume without loss of generality that $h\neq q_{2i,r+1}$; otherwise we would replace $r$ by $r+1$. Then $d_C(h,q_{2i,r})<d_C(q_{2i,r},q_{2i,r+1})=||q_{2i+1}\frac{a}{p}||$. Suppose in addition that $|h-q_{2i,r}|<q_{2i+2}$. If $h\neq q_{2i,r}$, we have from Proposition \ref{pro:contfrac} \ref{pro:contfrac2} that $d_C(h,q_{2i,r})\geq||q_{2i+1}\frac{a}{p}||$, which is a contradiction. Then $h=q_{2i,r}$.
\end{proof}

\begin{coro}\label{coro:density} Let $h\in\mathbb{Z}$, $1\leq h<p$. 
\begin{enumerate}[label=\arabic*)]
    \item\label{coro:density1} If $M_h$ lies between $M_{q_{2i+2}}$ and $M_{q_{2i}}$ and $h<q_{2i+2}$, then $h=q_{2i,r}$ for some $0\leq r\leq a_{2i+2}$.
    \item\label{coro:density2} If $M_h$ lies between $M_{q_{2i+2}}$ and $M_{q_{2i,r}}$ for some $0\leq r\leq a_{2i+2}$ and $h<q_{2i+2}$, then $h=q_{2i,r'}$ for some $r\leq r'\leq a_{2i+2}$.
\end{enumerate}
\end{coro}
\begin{proof}
Note that \ref{coro:density1} is a particular case of \ref{coro:density2}, so it is enough to prove the latter. Suppose that $M_h$ lies between $M_{q_{2i+2}}$ and $M_{q_{2i,r}}$ for some $0\leq r\leq a_{2i+2}$ and $h<q_{2i+2}$. By Proposition \ref{pro:arcsemiconv}, $M_h$ lies between $M_{q_{2i+2}}$ and $M_{q_{2i}}$. Using Lemma \ref{lem:density}, we have that $d_C(h,q_{2i,r'})<||q_{2i+1}\frac{a}{p}||$ for some $0\leq r'\leq a_{2i+2}$. Now, since $h<q_{2i+2}$, we have $|h-q_{2i,r'}|<q_{2i+2}$, so $h=q_{2i,r'}$. Finally, our hypothesis becomes $\widehat{q_{2i,r'}\frac{a}{p}}>\widehat{q_{2i,r}\frac{a}{p}}$, so from Proposition \ref{pro:arcsemiconv} we see that $r\leq r'$.
\end{proof}

This setup allows us to rebuild the proof of Proposition \ref{pro:paramE}.

\begin{proof} \textit{(of Proposition \ref{pro:paramE})}
We consider the results at Proposition \ref{pro:contfrac}. From \ref{pro:contfrac2} and \ref{pro:contfrac3} we see immediately that $q_{2i}\in E$ for all $0\leq i<\frac{n}{2}$.

Let us prove that all the numbers $q_{2i,r}$ as in the statement belong to $E$. Fix $1\leq r\leq a_{2i+2}$ and call $h=q_{2i,r}$. Pick $1\leq h'<h$ and assume that $\widehat{h'\frac{a}{p}}<\widehat{h\frac{a}{p}}$. Since $h'<q_{2i+2}$ and $q_{2i+2}\in E$, we have that $\widehat{q_{2i+2}\frac{a}{p}}<\widehat{h'\frac{a}{p}}$, so $M_{h'}$ is between $M_{q_{2i+2}}$ and $M_h$. By Corollary \ref{coro:density}, $h'=q_{2i,r'}$ for some $r\leq r'\leq a_{2i+2}$. But $r'\geq r$ implies $h'\geq h$, which is a contradiction. Hence $\widehat{h'\frac{a}{p}}\geq\widehat{h\frac{a}{p}}$, and since $1\leq h'<h<p$, the equality is not possible.

Finally, we prove that there are no more integer numbers in $E$. Let $h\in E$. If $h$ is an even-indexed convergent, we have finished. Otherwise, we know that the sequence of even-indexed convergents is strictly increasing, so $h$ lies necessarily between two of them. In other words, there is $0<i<\frac{n-1}{2}$ such that $q_{2i}<h<q_{2i+2}$. Since these integers belong to $E$, we have that $M_h$ lies between $M_{q_{2i+2}}$ and $M_{q_{2i}}$. By Corollary \ref{coro:density}, $h=q_{2i,r}$ for some $0\leq r\leq a_{2i+2}$.
\end{proof}

\subsection{The matrices $M(\pi_K^{-d(k)}\Psi^k)$}\label{sectmatrices}

In this section we determine the values of the entries $\overline{\mu}_{j,i}^{(k)}$ following the expression \eqref{eq:coordbasis}. First, we rewrite the numbers $w(i)$ in terms of the set $E$.

The situation changes substantially depending on whether or not $k+i\leq p-1$, so we distinguish these two cases.

\begin{pro}\label{pro:entriesleqp-1} Fix $0\leq k,i\leq p-1$ such that $k+i\leq p-1$, and call $h=p-i$.
\begin{enumerate}[label=\arabic*)]
    \item\label{pro:entriesleqp-11} If $j\neq k+i$, then $\overline{\mu}_{j,i}^{(k)}=0$.
    \item\label{pro:entriesleqp-12} If $h\notin E$, $$\overline{\mu}_{k+i,i}^{(k)}=\begin{cases}
    1 & \hbox{if }h=p\hbox{ or }\widehat{(k+1)\frac{a}{p}}<\widehat{h\frac{a}{p}}\\
    0 & \hbox{otherwise}\end{cases}$$
    \item\label{pro:entriesleqp-13} If $h\in E$, $\overline{\mu}_{k+i,i}^{(k)}=1$.
\end{enumerate}
\end{pro}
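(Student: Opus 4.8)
We work from the explicit formula \eqref{eq:coordbasis}, namely $\mu_{j,i}^{(k)}=C(w^j,w^{k+i})\pi_K^{\nu_j-\nu_k-n_i}$, and note that since $k+i\le p-1$ the element $w^{k+i}$ is simply a basis vector, so $C(w^j,w^{k+i})=\delta_{j,k+i}$. This instantly yields part \eqref{pro:entriesleqp-11}: if $j\ne k+i$ the coefficient vanishes identically, so $\overline{\mu}_{j,i}^{(k)}=0$. For the diagonal-type entry $j=k+i$ we are left to decide when the exponent $\nu_{k+i}-\nu_k-n_i$ is zero (giving $\overline{\mu}_{k+i,i}^{(k)}=1$) versus strictly positive (giving $\overline{\mu}_{k+i,i}^{(k)}=0$); it cannot be negative because $\pi_K^{-\nu_i}w^i$ and $\pi_K^{-n_i}w^i$ both lie in $\mathfrak{A}_\theta$ and $\mathfrak{A}_{L/K}\subseteq\mathfrak{A}_\theta$ forces $n_i\le\nu_i$, and similarly the product of basis elements of $\mathfrak{A}_{L/K}$ with a basis element of $\mathfrak{A}_\theta$ stays in $\mathfrak{A}_\theta$, so $\nu_j-\nu_k-n_i\ge0$ whenever the corresponding coefficient is a unit.

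So everything reduces to computing $\nu_{k+i}-\nu_k-n_i$ under the hypothesis $k+i\le p-1$. Here I plug in Lemma \ref{lem:nuiE}: $n_i=ia_0+\lfloor i\frac{a}{p}\rfloor+\epsilon$ where $\epsilon=\mathbf{1}_{h\in E}$ with $h=p-i$, together with $\nu_m=\lfloor\frac{a+m\ell}{p}\rfloor=ma_0+\lfloor(m+1)\frac{a}{p}\rfloor$ (using $\ell/p=a_0+a/p$ and a shift-by-one identity, exactly as in \eqref{eq:nup-1}). Then
\[
\nu_{k+i}-\nu_k-n_i=\Big\lfloor(k+i+1)\tfrac{a}{p}\Big\rfloor-\Big\lfloor(k+1)\tfrac{a}{p}\Big\rfloor-\Big\lfloor i\tfrac{a}{p}\Big\rfloor-\epsilon,
\]
the $ia_0$ terms cancelling. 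Writing each floor as (the argument) minus its fractional part, the integer on the right equals $\widehat{(k+1)\frac{a}{p}}+\widehat{i\frac{a}{p}}-\widehat{(k+i+1)\frac{a}{p}}-\epsilon$, an integer lying in $\{-1-\epsilon,\dots,1-\epsilon\}$ but known to be $\ge0$, hence $0$ or $1$ when $\epsilon=0$ and forced to be $0$ when $\epsilon=1$ (which already gives part \eqref{pro:entriesleqp-13} once we know non-negativity, since $\epsilon=1$ would otherwise make it $\le 0$ unless it is exactly $0$). It remains to pin down, in the case $\epsilon=0$, i.e. $h=p-i\notin E$, when the value is $0$ versus $1$. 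I will use the standard fact $\widehat{i\frac{a}{p}}=\widehat{(p-h)\frac{a}{p}}=1-\widehat{h\frac{a}{p}}$ (valid since $p-h=i$ is not a multiple of $p$ for $1\le i\le p-1$; the edge case $h=p$, i.e. $i=0$, is handled separately and trivially gives $\overline{\mu}_{k,0}^{(k)}=1$). Substituting, the quantity becomes $\widehat{(k+1)\frac{a}{p}}-\widehat{h\frac{a}{p}}+1-\widehat{(k+i+1)\frac{a}{p}}$; combining with $k+i+1=p-(h-1-k)$-type bookkeeping and the definition of $E$, the carry is $1$ exactly when $\widehat{(k+1)\frac{a}{p}}<\widehat{h\frac{a}{p}}$ and $0$ otherwise, which is the claim of \eqref{pro:entriesleqp-12}.

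\textbf{The main obstacle.} The conceptually routine but error-prone part is the bookkeeping with fractional parts and carries: I must be careful that the "shift by one" passing between $\nu_m$ and $\lfloor(m+1)\frac a p\rfloor$ is applied consistently, that the identity $\widehat{(p-h)\frac a p}=1-\widehat{h\frac a p}$ is only invoked when $p\nmid h$, and that the inequality defining membership in $E$ (with its strict "$1\le h'<h$" quantifier) is matched precisely to the inequality $\widehat{(k+1)\frac a p}<\widehat{h\frac a p}$ that governs the carry — in particular I should double-check the boundary case $k+1=h$, where the two fractional parts coincide and the carry is $0$, consistent with the "$<$" in the statement. A clean way to organise this is to first record, as a preliminary sublemma, the two floor identities $\nu_m=ma_0+\lfloor(m+1)\frac a p\rfloor$ and $\lfloor x\rfloor+\lfloor y\rfloor=\lfloor x+y\rfloor-\mathbf{1}_{\widehat x+\widehat y\ge 1}$, after which parts \eqref{pro:entriesleqp-11}--\eqref{pro:entriesleqp-13} all follow by direct substitution and case analysis on whether $h\in E$ and whether $\widehat{(k+1)\frac a p}<\widehat{h\frac a p}$. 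I expect no genuine difficulty beyond this, since the hard input (the description of $E$ via continued fractions, Proposition \ref{pro:paramE}, and the formula for $n_i$, Lemma \ref{lem:nuiE}) is already available.
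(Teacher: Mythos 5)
Your proposal is correct and is essentially the argument the paper intends: the paper simply defers to the dihedral-case proof (Proposition 7.10 of the cited work), which is the same computation — reduce to the exponent $\nu_{k+i}-\nu_k-n_i$ via \eqref{eq:coordbasis} and $C(w^j,w^{k+i})=\delta_{j,k+i}$, then analyse it with Lemma \ref{lem:nuiE}, fractional parts, $\widehat{i\frac{a}{p}}=1-\widehat{h\frac{a}{p}}$, and the definition of $E$ (plus integrality of the matrix entries for part \eqref{pro:entriesleqp-13}). One caution for the write-up: you use the word ``carry'' with the convention opposite to your own sublemma — the carry $\mathbf{1}_{\widehat{(k+1)\frac{a}{p}}+\widehat{i\frac{a}{p}}\geq1}$ equals the exponent, so it is $0$ (not $1$) exactly when $\widehat{(k+1)\frac{a}{p}}<\widehat{h\frac{a}{p}}$, which is the case giving $\overline{\mu}_{k+i,i}^{(k)}=1$, and at the boundary $k+1=h$ the carry is $1$, so the entry reduces to $0$, consistent with the strict inequality in the statement; the final conclusions you state are the right ones, only the labelling of the carry should be fixed.
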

\begin{proof}
This is essentially the proof of \cite[Lemme 2.a]{bertrandiasbertrandiasferton} and \cite[Proposition 7.10]{gildegpdihedral}.

Since $k+i\leq p-1$, $\Psi_{k+i}$ is already written with respect to the $K$-basis $\{\Psi^l\}_{l=0}^{p-1}$. Then \ref{pro:entriesleqp-11} is trivial.

Let us prove \ref{pro:entriesleqp-12} and \ref{pro:entriesleqp-13}. Since $C(\Psi^{k+i},\Psi^{k+i})=1$, $\mu_{k+i,i}^{(k)}=\pi_K^{d(k+i)-d(k)-w(i)}$. By Lemma \ref{lem:nuiE}, $$d(k)+w(i)=(k+i)a_0+\Big\lfloor i\frac{a}{p}\Big\rfloor+\Big\lfloor(k+1)\frac{a}{p}\Big\rfloor+\epsilon,$$ where $\epsilon=1$ if $h\in E$ and $\epsilon=0$ otherwise. Now, we have that $$\Big\lfloor(i+k+1)\frac{a}{p}\Big\rfloor-\Big\lfloor i\frac{a}{p}\Big\rfloor-\Big\lfloor(k+1)\frac{a}{p}\Big\rfloor=\widehat{i\frac{a}{p}}+\widehat{(k+1)\frac{a}{p}}-\widehat{(i+k+1)\frac{a}{p}},$$ so $$d(k+i)-d(k)-w(i)=\widehat{i\frac{a}{p}}+\widehat{(k+1)\frac{a}{p}}-\widehat{(i+k+1)\frac{a}{p}}-\epsilon.$$ 

Assume that $h\notin E$, so that $$d(k+i)-d(k)-w(i)=\widehat{i\frac{a}{p}}+\widehat{(k+1)\frac{a}{p}}-\widehat{(i+k+1)\frac{a}{p}}.$$ If $h=p$, then $i=0$ and trivially $d(k+i)=d(k)+w(i)$, so $\overline{\mu}_{k+i,i}^{(k)}=1$. Otherwise, we have the equality $d(k+i)=d(k)+w(i)$ if and only if $\widehat{(k+1)\frac{a}{p}}+\widehat{i\frac{a}{p}}<1$, that is, $\widehat{(k+1)\frac{a}{p}}<\widehat{h\frac{a}{p}}$.

Assume that $h\in E$. In this case, we have $$d(k+i)-d(k)-w(i)=\widehat{i\frac{a}{p}}+\widehat{(k+1)\frac{a}{p}}-\widehat{(i+k+1)\frac{a}{p}}-1.$$ It is clear that $\widehat{(k+1)\frac{a}{p}}+\widehat{i\frac{a}{p}}<2$. Then $d(k+i)-d(k)-w(i)\leq0$, and it is also non-negative because $\mu_{k+i,i}^{(k)}\in\mathcal{O}_K$. Hence $d(k+i)-d(k)-w(i)=0$ and $\overline{\mu}_{k+i,i}^{(k)}=1$.
\end{proof}


Let us move on to the case $k+i\geq p$. In this case, in order to determine the $K$-valuations of the coefficients $C(\Psi^j,\Psi^{k+i})$, we rewrite $\Psi^{k+i}$ using the equality $\Psi^p=\lambda\Psi$ from Corollary \ref{coro:eqdegp}. Again, due to the simplicity of this equation, for the cases $r=1$ and $r=2$, we obtain a much simpler description than the ones at \cite[Lemme 2]{bertrandiasbertrandiasferton} and \cite[Proposition 7.11]{gildegpdihedral}, respectively.

\begin{pro}\label{pro:entriesgeqp} Fix $0\leq k,i\leq p-1$ such that $k+i\geq p$, call $h=p-i$ and let $m=k+i-(p-1)$.
\begin{enumerate}[label=\arabic*)]
    \item\label{pro:entriesgeqp1} If $j\neq m$, then $\overline{\mu}_{j,i}^{(k)}=0$.
    \item\label{pro:entriesgeqp2} If $h\notin E$, $\overline{\mu}_{m,i}^{(k)}\neq0$ if $\frac{a}{p}+\widehat{(k+1)\frac{a}{p}}<\widehat{h\frac{a}{p}}$, and $\overline{\mu}_{m,i}^{(k)}=0$ otherwise.
    \item\label{pro:entriesgeqp3} If $h\in E$, $\overline{\mu}_{m,i}^{(k)}\neq0$ if $\frac{a}{p}+\widehat{(k+1)\frac{a}{p}}<\widehat{h\frac{a}{p}}+1$, and $\overline{\mu}_{m,i}^{(k)}=0$ otherwise.
\end{enumerate}
\end{pro}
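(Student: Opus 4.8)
The plan is to follow the blueprint of the degree $p$ dihedral case treated in \cite{gildegpdihedral}, which becomes substantially lighter here thanks to the clean relation $w^{p}=\varepsilon p\,y^{(p-1)(r-1)}w$ of Theorem \ref{thm:hopfalgdimp}. Throughout I write $h=p-i$ and $m=k+i-(p-1)$; note that $1\leq m\leq p-1$ since $p\leq k+i\leq 2p-2$, and that $1\leq h\leq p-1$.

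First I would rewrite $w^{k+i}$. Since $r\mid p-1$, the exponent $\tfrac{(p-1)(r-1)}{r}$ is an integer, and using $y^{p-1}=\alpha^{\frac{p-1}{r}}$ one gets
$$w^{k+i}=w^{k+i-p}w^{p}=\varepsilon p\, y^{(p-1)(r-1)}w^{k+i-p+1}=\varepsilon p\,\alpha^{\frac{(p-1)(r-1)}{r}}w^{m},$$
an element of $K$ times $w^{m}$. Hence $C(w^{j},w^{k+i})=0$ for all $j\neq m$, which is \eqref{pro:entriesgeqp1}, while $v_{K}\big(C(w^{m},w^{k+i})\big)=v_{K}(p)+\tfrac{(p-1)(r-1)}{r}v_{K}(\alpha)=e+\tfrac{(p-1)(r-1)c}{r}$. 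By \eqref{eq:coordbasis}, the $i$-th column of $M(\pi_{K}^{-\nu_{k}}w^{k})$ has its only possibly nonzero entry in row $m$, and
$$v_{K}\big(\mu_{m,i}^{(k)}\big)=e+\tfrac{(p-1)(r-1)c}{r}+\nu_{m}-\nu_{k}-n_{i}\geq 0,$$
the inequality holding because $\mu_{m,i}^{(k)}\in\mathcal{O}_{K}$. Since $\mu_{m,i}^{(k)}\in\mathcal{O}_{K}$, we have $\overline{\mu}_{m,i}^{(k)}\neq 0$ precisely when this valuation vanishes, so the whole statement reduces to computing it.

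Next I would simplify that valuation. Writing $\ell=pa_{0}+a$ gives $\nu_{n}=na_{0}+\lfloor(n+1)\tfrac{a}{p}\rfloor$, and Lemma \ref{lem:nuiE} gives $n_{i}=ia_{0}+\lfloor i\tfrac{a}{p}\rfloor+\epsilon$, where $\epsilon=1$ if $h\in E$ and $\epsilon=0$ otherwise; combining these with $m-k-i=-(p-1)$ yields
$$v_{K}\big(\mu_{m,i}^{(k)}\big)=e+\tfrac{(p-1)(r-1)c}{r}-(p-1)a_{0}+\Big\lfloor(m+1)\tfrac{a}{p}\Big\rfloor-\Big\lfloor(k+1)\tfrac{a}{p}\Big\rfloor-\Big\lfloor i\tfrac{a}{p}\Big\rfloor-\epsilon.$$
Here the standing hypothesis $t\geq\frac{rpe}{p-1}-r$ of this section enters decisively: by Corollary \ref{coro:charactalmostmaxram} together with \eqref{eq:nup-1} it forces $\tfrac{(p-1)(r-1)c}{r}=a+(p-1)a_{0}-e$, so that $e+\tfrac{(p-1)(r-1)c}{r}-(p-1)a_{0}=a$ and all dependence on $e$ and $c$ disappears. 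Using $\lfloor i\tfrac{a}{p}\rfloor=a-1-\lfloor h\tfrac{a}{p}\rfloor$ (valid since $i=p-h$ and $p\nmid ha$) and then the identity $(m+1)\tfrac{a}{p}=(k+1)\tfrac{a}{p}+\tfrac{a}{p}-h\tfrac{a}{p}$ to split the last remaining floor, the expression collapses to
$$v_{K}\big(\mu_{m,i}^{(k)}\big)=1-\epsilon+\Big\lfloor\tfrac{a}{p}+\widehat{(k+1)\tfrac{a}{p}}-\widehat{h\tfrac{a}{p}}\Big\rfloor.$$

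Finally I would read off the three cases. Since $\tfrac{a}{p}+\widehat{(k+1)\tfrac{a}{p}}-\widehat{h\tfrac{a}{p}}$ lies in $(-1,2)$, its floor is $-1$, $0$ or $1$. If $h\notin E$, then $v_{K}(\mu_{m,i}^{(k)})=1+\lfloor\cdots\rfloor$ vanishes exactly when the floor equals $-1$, i.e.\ when $\tfrac{a}{p}+\widehat{(k+1)\tfrac{a}{p}}<\widehat{h\tfrac{a}{p}}$, which is \eqref{pro:entriesgeqp2}. If $h\in E$, then $v_{K}(\mu_{m,i}^{(k)})=\lfloor\cdots\rfloor\geq 0$, so the floor is $0$ or $1$, and it equals $0$ exactly when $\tfrac{a}{p}+\widehat{(k+1)\tfrac{a}{p}}<\widehat{h\tfrac{a}{p}}+1$, which is \eqref{pro:entriesgeqp3}. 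I expect the main obstacle to be the floor-function bookkeeping in the third paragraph---rewriting $\lfloor i\tfrac{a}{p}\rfloor$ and $\lfloor(m+1)\tfrac{a}{p}\rfloor$ in terms of $h$, $k$ and $a_{0}$ and watching the $e$- and $c$-terms cancel---whereas the reduction itself in the second paragraph is immediate from Theorem \ref{thm:hopfalgdimp}.
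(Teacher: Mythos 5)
Your proposal is correct and follows essentially the same route as the paper: reduce $w^{k+i}=\varepsilon p\,y^{(p-1)(r-1)}w^{m}$ via Theorem \ref{thm:hopfalgdimp}, read off part \eqref{pro:entriesgeqp1} from \eqref{eq:coordbasis}, compute $v_K(\mu_{m,i}^{(k)})$ using Lemma \ref{lem:nuiE} and the identity $e+\frac{p-1}{r}(r-1)c=a+(p-1)a_0$ coming from Corollary \ref{coro:charactalmostmaxram} and \eqref{eq:nup-1}, and then sort the cases by fractional parts. The only (harmless) difference is cosmetic: you package the final valuation as $1-\epsilon+\lfloor\frac{a}{p}+\widehat{(k+1)\frac{a}{p}}-\widehat{h\frac{a}{p}}\rfloor$ and finish the case analysis explicitly, where the paper leaves it in the equivalent form $\frac{a}{p}-\widehat{(m+1)\frac{a}{p}}+\widehat{i\frac{a}{p}}+\widehat{(k+1)\frac{a}{p}}-\epsilon$ and delegates the last step to the dihedral-case reference.
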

\begin{proof}
We adapt the arguments at \cite[Lemme 2.b]{bertrandiasbertrandiasferton} and \cite[Proposition 7.11]{gildegpdihedral} to our case.

From Corollary \ref{coro:eqdegp}, we have that $$\Psi^{k+i}=\Psi^{m-1}\Psi^p=\lambda\Psi^m.$$ Hence, $$C(w^j,w^{k+i})=\begin{cases}
    \lambda & \hbox{if }j=m, \\
    0 & \hbox{otherwise}.
\end{cases}$$ Looking at \eqref{eq:coordbasis}, \ref{pro:entriesgeqp1} follows immediately. On the other hand, \begin{equation*}
    \begin{split}
        v_K(\mu_{m,i}^{(k)})&=e-\frac{p-1}{r}c+d(m)-d(k)-w(i)\\&=e-\frac{p-1}{r}c+ma_0+\Big\lfloor(m+1)\frac{a}{p}\Big\rfloor-(k+i)a_0-\Big\lfloor i\frac{a}{p}\Big\rfloor-\Big\lfloor(k+1)\frac{a}{p}\Big\rfloor-\varepsilon,
    \end{split}
\end{equation*} where $\varepsilon=1$ if $h\in E$ and $\varepsilon=0$ otherwise. Since $ma_0=(k+i)a_0-(p-1)a_0$ and $$\Big\lfloor(m+1)\frac{a}{p}\Big\rfloor-\Big\lfloor i\frac{a}{p}\Big\rfloor-\Big\lfloor(k+1)\frac{a}{p}\Big\rfloor=-(p-1)\frac{a}{p}-\widehat{(m+1)\frac{a}{p}}+\widehat{i\frac{a}{p}}+\widehat{(k+1)\frac{a}{p}},$$ this can be rewritten as $$v_K(\mu_{m,i}^{(k)})=e-\frac{p-1}{r}c-(p-1)a_0-(p-1)\frac{a}{p}-\widehat{(m+1)\frac{a}{p}}+\widehat{i\frac{a}{p}}+\widehat{(k+1)\frac{a}{p}}-\epsilon.$$ Now, note that since $\ell\geq\frac{pe}{p-1}-1$, from Proposition \ref{pro:condscaffoldp} it follows that $\mathfrak{c}=a$, and hence by Proposition \ref{pro:precscaffold}, $e-\frac{p-1}{r}c=d(p-1)=a+(p-1)a_0$. Therefore, 
$$v_K(\mu_{m,i}^{(k)})=\frac{a}{p}-\widehat{(m+1)\frac{a}{p}}+\widehat{i\frac{a}{p}}+\widehat{(k+1)\frac{a}{p}}-\epsilon.$$ Replacing $i=p-h$, we have $$v_K(\mu_{m,i}^{(k)})=\frac{a}{p}-\widehat{(m+1)\frac{a}{p}}+1-\widehat{h\frac{a}{p}}+\widehat{(k+1)\frac{a}{p}}-\epsilon.$$ Let $\gamma=\frac{a}{p}-\widehat{h\frac{a}{p}}+\widehat{(k+1)\frac{a}{p}}$, so that $$v_K(\mu_{m,i}^{(k)})=\gamma-\widehat{(m+1)\frac{a}{p}}+1-\epsilon.$$ Since $\frac{a}{p}-h\frac{a}{p}+(k+1)\frac{a}{p}=(m+1)\frac{a}{p}$, we have $$\gamma=\begin{cases}
\widehat{(m+1)\frac{a}{p}} & \hbox{if }0\leq\gamma<1, \\
\widehat{(m+1)\frac{a}{p}}-1 & \hbox{if }-1\leq\gamma<0, \\
\widehat{(m+1)\frac{a}{p}}+1 & \hbox{if }1\leq\gamma<2, \\
\end{cases}$$ Therefore, $$v_K(\mu_{j,i}^{(k)})=\begin{cases}
1-\epsilon & \hbox{if }0\leq\gamma<1, \\
-\epsilon & \hbox{if }-1\leq\gamma<0, \\
2-\epsilon & \hbox{if }1\leq\gamma<2. \\
\end{cases}$$

Assume that $h\notin E$, so $\epsilon=0$. Then $v_K(\mu_{j,i}^{(k)})=0$ if and only if $-1\leq\gamma<0$. The first inequality trivially holds, while the second one is equivalent to $\frac{a}{p}+\widehat{(k+1)\frac{a}{p}}<\widehat{h\frac{a}{p}}$. This proves \ref{pro:entriesgeqp2}. Otherwise, if $h\in E$, then $\epsilon=1$ and $v_K(\mu_{j,i}^{(k)})=0$ if and only if $0\leq\gamma<1$. If $\widehat{h\frac{a}{p}}>\widehat{(k+1)\frac{a}{p}}$, since $h\in E$ we have by definition that $h\leq k+1$, which contradicts our hypothesis, so $\widehat{h\frac{a}{p}}\leq\widehat{(k+1)\frac{a}{p}}$. Therefore, the first inequality in the chain above always holds. On the other hand, the second inequality may be rewritten as $\frac{a}{p}+\widehat{(k+1)\frac{a}{p}}<\widehat{h\frac{a}{p}}+1$, finishing the proof of \ref{pro:entriesgeqp3}.
\end{proof}

\begin{rmk}\normalfont The proof of Proposition \ref{pro:entriesgeqp} shows that the presence of $v_K(\lambda)=e-\frac{p-1}{r}c$ in the expression of the precision $\mathfrak{c}$ of the scaffold is more than just a coincidence; it is used to equate it with $a+(p-1)a_0$, which is a fundamental step to finish the proof.
\end{rmk}

\subsection{The $\mathfrak{A}_{L/K}$-freeness of $\mathcal{O}_L$}

In this section we carry out the proof of Theorem \ref{maintheorem} \ref{mainthm4}.

\begin{pro}\label{pro:sufficiency} If $n\leq 4$, then $\mathcal{O}_L$ is $\mathfrak{A}_{L/K}$-free.
\end{pro}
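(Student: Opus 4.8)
The plan is as follows. By Proposition \ref{pro:hnormalfreeness} \eqref{pro:hnormalfreeness4} and the reduction carried out at the beginning of this section (see \eqref{eq:psialpha}--\eqref{eq:detMalpha}), it suffices to exhibit one element $\alpha \in \mathfrak{A}_{\theta}$ with $\det M(\alpha) \not\equiv 0 \pmod{\mathfrak{p}_K}$, where $M(\alpha)$ is taken with respect to the bases $\{\pi_K^{-n_i}w^i\}_{i=0}^{p-1}$ of $\mathfrak{A}_{L/K}$ and $\{\pi_K^{-\nu_i}w^i\}_{i=0}^{p-1}$ of $\mathfrak{A}_{\theta}$. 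If $n \le 2$ there is nothing to do: by Proposition \ref{pro:charactatheta} we have $\mathfrak{A}_{\theta} = \mathfrak{A}_{L/K}$, so $\alpha = 1$ works, $M(1)$ being the identity matrix. Hence I may assume $n \in \{3,4\}$.

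Writing $\alpha = \sum_{k=0}^{p-1} x_k \pi_K^{-\nu_k}w^k$ with $x_k \in \mathcal{O}_K$, linearity gives $M(\alpha) = \sum_{k=0}^{p-1} x_k M(\pi_K^{-\nu_k}w^k)$, and I would reduce modulo $\mathfrak{p}_K$ using Section \ref{sectmatrices}. The structural facts I would rely on are: first, $M(\mathrm{Id}) \bmod \mathfrak{p}_K$ is the diagonal $0$--$1$ matrix supported on $\{i : n_i = \nu_i\}$, so its defect is concentrated on the indices with $n_i < \nu_i$; second, for each $k \ge 1$ the reduction $\overline{M(\pi_K^{-\nu_k}w^k)}$ has at most one nonzero entry per column, lying in row $k+i$ or row $k+i+1-p$, and by Propositions \ref{pro:entriesleqp-1} and \ref{pro:entriesgeqp} the nonzero positions are governed by inequalities between the fractional parts $\widehat{h\frac{a}{p}}$ and by membership of $h = p-i$ in $E$. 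The plan is then to take $\alpha$ supported on $\{0\}$ together with a bounded set of indices attached to the denominators $q_1,\dots,q_{n-1}$ of the convergents of $\frac{\ell}{p}$ (and the relevant semiconvergents of Proposition \ref{pro:paramE}), with the coefficients $x_k$ suitable units: this is the direct analogue of the generators used by Ferton in the cyclic case \cite{bertrandiasbertrandiasferton,fertonthesis} and by the author in the dihedral case \cite{gildegpdihedral}. Using the description of $E$ in Proposition \ref{pro:paramE} together with the packing/uniqueness statement of Lemma \ref{lemadensity}, one checks that each defect index is repaired by exactly one of the added terms and that these contributions do not interfere with each other.

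After a permutation of rows and of columns grouping indices according to the depth of the associated (semi)convergent in the continued fraction recursion, $\overline{M(\alpha)}$ acquires a block lower-triangular shape. Since $n \le 4$, this decomposition has only a bounded number of nontrivial blocks, and the determinant of each is evaluated directly from Proposition \ref{pro:contfrac}---chiefly the identity $||q_{n-1}\frac{a}{p}|| = \frac{1}{p}$ and the position of $\widehat{q_i\frac{a}{p}}$ relative to $\frac{1}{2}$ according to the parity of $i$---together with the residue equality $e + (r-1)c\frac{p-1}{r} = \nu_{p-1}$ from Corollary \ref{coro:charactalmostmaxram}, which is what makes the $k+i \ge p$ entries computable. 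Carrying this out in the two cases $n=3$ (where $E$ is the single run of semiconvergents from $q_0$ to $q_2$) and $n=4$ (where $E$ splits into two runs, coming from $q_0$ and from $q_2$) yields $\det M(\alpha) \not\equiv 0 \pmod{\mathfrak{p}_K}$ and finishes the proof.

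I expect the main obstacle to be the bookkeeping in the regime $k+i \ge p$ of Proposition \ref{pro:entriesgeqp}: there the generator $w$ contributes the extra factor $\varepsilon p y^{(r-1)(p-1)}$, the valuation balance is shifted by $\nu_{p-1}$, and the nonvanishing criterion picks up an additional $+1$ according as $h \in E$. One must verify that the chosen $\alpha$ still leaves exactly one nonzero entry in each row and column of every diagonal block---this is precisely where the uniqueness clause of Lemma \ref{lemadensity} and the density statement of Proposition \ref{pro:paramE} do the work---and that the unit $\varepsilon$ never produces a cancellation annihilating a block determinant. The remaining delicate point is the parity split of $n$ and the associated boundary behaviour of the semiconvergents (the endpoint $r_{2i} = a_{2i+2}$ being included only in the final run, and only when $n$ is odd), but this reduces to a finite case analysis once the block structure is in place.
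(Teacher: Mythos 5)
Your overall framework matches the paper's: reduce via Proposition \ref{pro:hnormalfreeness} to exhibiting some $\alpha\in\mathfrak{A}_{\theta}$ with $\det M(\alpha)\not\equiv 0\ (\mathrm{mod}\ \mathfrak{p}_K)$, dispose of $n\leq 2$ via Proposition \ref{pro:charactatheta}, and feed in the entry descriptions of Section \ref{sectmatrices}. But the core of the argument is missing. You never specify which indices actually enter the support of $\alpha$, and you do not prove either of the two claims that carry the proof: that each ``defect'' index $i$ with $n_i<\nu_i$ is repaired by exactly one added term, and that the resulting determinant is nonzero modulo $\mathfrak{p}_K$. These are precisely the points where cancellation can occur, and you acknowledge the danger (that the units involved might ``annihilate a block determinant'') without any mechanism to rule it out: with several terms carrying \emph{fixed} unit coefficients attached to $q_1,\dots,q_{n-1}$ and semiconvergents, block determinants can very well vanish in the residue field, and Proposition \ref{pro:contfrac} alone does not exclude this. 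The claimed block lower-triangular shape after permuting rows and columns is likewise asserted, not established.

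The paper's proof avoids all of this with a leaner device: take the \emph{single} extra term $k=q_2-1$ and set $\alpha_u=u\,\mathrm{Id}_H+\pi_K^{-\nu_k}w^k$ with $u\in\mathcal{O}_K^{\times}$ a free parameter. For this particular $k$, and using $n\leq 4$, Lemma \ref{lemadensity} together with Propositions \ref{pro:entriesleqp-1} and \ref{pro:entriesgeqp} shows that $\overline{M(\pi_K^{-\nu_k}w^k)}$ has exactly one nonzero entry in \emph{every} column (in row $k+i$, resp.\ $m_i=k+i-(p-1)$), so its nonzero entries follow a permutation pattern; combined with the diagonal $\overline{u}M(1)$, which has a zero on the diagonal since $n>2$, one gets $\det\overline{M(\alpha_u)}=\overline{u}\,P(\overline{u})$ for a polynomial $P$ of degree at most $p-2$ whose constant term is the product of those permutation entries, hence nonzero. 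Choosing $u$ avoiding the at most $p-2$ roots of $P$ finishes the proof: no explicit block determinants, no parity case split, and no cancellation issue, because nonvanishing comes from a degree count rather than from evaluating determinants. Your plan might conceivably be pushed through, but as written the decisive steps (the choice of support, the interference-free repair of the defects, and the nonvanishing of each block) are left unverified, so there is a genuine gap.
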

\begin{proof}
We adapt the proof at \cite[Proposition 2]{bertrandiasbertrandiasferton} to this situation.

We know from Corollary \ref{coro:scaffoldp} \ref{coro:scaffoldp2} and Proposition \ref{pro:charactatheta} that $\mathcal{O}_L$ is $\mathfrak{A}_{L/K}$-free under the assumption that $n\leq2$. Let us assume that $n\in\{3,4\}$.

Call $k=q_2-1$. For each $u\in\mathcal{O}_K^{\times}$, let $$\alpha_u=u\mathrm{Id}_H+\pi_K^{-d(k)}\Psi^k\in\mathfrak{A}_{\theta},$$ so that $M(\alpha_u)=uM(1)+M(\pi_K^{-d(k)}\Psi^k)$. Clearly, the matrix $M(1)$ is diagonal and from \eqref{eq:coordbasis} we see that $\overline{\mu}_{ii}^{(0)}=1$ (resp. $0$) if $w(i)=d(i)$ (resp. $w(i)<d(i)$). Since $n>2$, we have that $\mathfrak{A}_{L/K}\neq\mathfrak{A}_{\theta}$, so there is some $0$ in the diagonal of $M(1)$. We proceed to study the matrix $M(\pi_K^{-d(k)}\Psi^k)$ for $k>0$. Throughout this proof, we fix $0\leq i\leq p-1$ and call $h=p-i$.

First, we study the entries $\overline{\mu}_{j,i}^{(k)}$ with $k+i\leq p-1$. If $j\neq k+i$, we have directly that $\overline{\mu}_{j,i}^{(k)}=0$ by Proposition \ref{pro:entriesleqp-1} \ref{pro:entriesleqp-11}. As for $j=k+i$, we claim that $\overline{\mu}_{k+i,i}^{(k)}=1$. By Proposition \ref{pro:entriesleqp-1} \ref{pro:entriesleqp-12}, it is enough to prove that if $h\notin E$ and $h\neq p$, then $\widehat{(k+1)\frac{a}{p}}<\widehat{h\frac{a}{p}}$. Suppose that $n=3$. By Proposition \ref{pro:contfrac} \ref{pro:contfrac3}, \ref{pro:contfrac4}, $\widehat{(k+1)\frac{a}{p}}=\widehat{q_2\frac{a}{p}}=\frac{1}{p}$, so $\widehat{(k+1)\frac{a}{p}}\leq\widehat{h\frac{a}{p}}$. Since $k+1<h<p$, the inequality is strict. Assume now that $n=4$, and suppose that $\widehat{h\frac{a}{p}}<\widehat{(k+1)\frac{a}{p}}=\widehat{q_2\frac{a}{p}}$. Since $q_4=p$, $M_h$ lies between $M_{q_4}$ and $M_{q_2}$. Now, we have that $h<q_4$ because $h\neq p$, so $h=q_{2,r}$ for some $0\leq r<a_4$ by Corollary \ref{coro:density}. Hence $h\in E$, which is a contradiction.

Next, we assume that $k+i\geq p$ and consider the entries $\overline{\mu}_{j,i}^{(k)}$. Let us call $m_i=k+i-(p-1)$. If $j\neq m_i$, we have by Proposition \ref{pro:entriesgeqp} \ref{pro:entriesgeqp1} that $\overline{\mu}_{j,i}^{(k)}=0$. We claim that $\overline{\mu}_{m_i,i}^{(k)}\neq0$. 

Assume that $h\in E$. Note that the inequality $k+i\leq p$ is equivalent to $h<k+1=q_2$. Since $q_2\in E$, $\widehat{h\frac{a}{p}}>\widehat{q_2\frac{a}{p}}=\widehat{(k+1)\frac{a}{p}}$, so $\frac{a}{p}+\widehat{(k+1)\frac{a}{p}}<\widehat{h\frac{a}{p}}+1$ and we are done by Proposition \ref{pro:entriesgeqp} \ref{pro:entriesgeqp3}. On the contrary, suppose that $h\notin E$. By Proposition \ref{pro:entriesgeqp} \ref{pro:entriesgeqp2}, it is enough to prove that $\frac{a}{p}+\widehat{(k+1)\frac{a}{p}}<\widehat{h\frac{a}{p}}$. Suppose that $\widehat{h\frac{a}{p}}<\frac{a}{p}$. If $n=3$, then $\widehat{q_2\frac{a}{p}}=\frac{1}{p}$, so \begin{equation}\label{eq:chain1}
    \widehat{q_2\frac{a}{p}}<\widehat{h\frac{a}{p}}<\frac{a}{p}.
\end{equation} If $n=4$, we have either this chain of inequalities or alternatively \begin{equation}\label{eq:chain2}
    \widehat{q_4\frac{a}{p}}<\widehat{h\frac{a}{p}}<\widehat{q_2\frac{a}{p}}.
\end{equation} Note that we have $h<q_2<q_4$. For $n\in\{3,4\}$, if \eqref{eq:chain1} holds, then Corollary \ref{coro:density} gives that $h=q_{2,r}$ for some $0\leq r\leq a_2$. Otherwise, if $n=4$ and \eqref{eq:chain2} is satisfied, then $h=q_{2,r}$ for some $0\leq r\leq a_4$ by Corollary \ref{coro:density}. In both cases, we have that $h\in E$, a contradiction. Hence, $\widehat{h\frac{a}{p}}>\frac{a}{p}$. Note that $h\neq 1$ because $1=q_0\in E$ and $h\notin E$. Then, $1\leq h-1<q_2$ with $q_2\in E$, so $\widehat{(h-1)\frac{a}{p}}>\widehat{q_2\frac{a}{p}}$. Since $\widehat{h\frac{a}{p}}>\frac{a}{p}$, necessarily $\widehat{(h-1)\frac{a}{p}}=\widehat{h\frac{a}{p}}-\frac{a}{p}$. Therefore, $\widehat{h\frac{a}{p}}>\frac{a}{p}+\widehat{(k+1)\frac{a}{p}}$, and the claim follows from Proposition \ref{pro:entriesgeqp} \ref{pro:entriesgeqp3}.

Therefore, the first row of $\overline{M(\alpha)}$ is $(\overline{u},0,\dots,0)$, and developing the determinant by this row, we obtain $\mathrm{det}(\overline{M(\alpha)})=\overline{u}\mathrm{det}(M')$, where $M'\in\mathcal{M}_{p-1}(\mathcal{O}_K/\mathfrak{p}_K)$. We number the entries of $M'$ with subscripts starting in $1$. Since $\overline{\mu}_{0,0}^{(0)}=1$ and the diagonal of $M(1)$ has some zero, so does the diagonal of $M'$, and the non-zero diagonal elements in $M'$ are $\overline{u}$. 

The entries $\overline{\mu}_{j,i}^{(k)}$ at the first $p-k-1$ columns of $M'$ have $k+i\leq p-1$, so the terms above the main diagonal of $M'$ are zero, and the only non-zero terms below are the entries of a subdiagonal of $1$'s in positions $(k+i,i)$, $1\leq i\leq p-k-1$. As for the entries at the other columns, for which $k+i\geq p$, let $m_i=k+i-(p-1)$. We already know that $\overline{\mu}_{m_i,i}^{(k)}\neq0$ and $\overline{\mu}_{j,i}^{(k)}=0$ for $j\neq m_i$. The subscripts of all the non-zero terms correspond to (the inverse of) the permutation of $(1,\dots,p-1)$ that sends $i$ to $k+i$ if $1\leq i\leq p-k-1$ and to $m_i$ otherwise, so their product $$b_0=\prod_{i=1}^{p-k-1}\overline{\mu}_{k+i,i}^{(k)}\prod_{i=p-k}^{p-1}\overline{\mu}_{m_i,i}^{(k)}$$ is the unique non-zero summand of $\mathrm{det}(M')$ independent of $\overline{u}$.

Since the diagonal of $M'$ has at least one zero entry (and hence independent of $\overline{u}$), we deduce that there is a polynomial $P\in\mathcal{O}_K[X]$ of degree at most $p-2$ and independent of $u$ such that $\mathrm{det}(M(\alpha))\equiv uP(u)\,(\mathrm{mod}\,\mathfrak{p}_K)$ and $\overline{P(0)}\neq0$. Since $P$ is of degree at most $p-2$, we can choose $u$ such that $\overline{P(u)}\neq0$, and therefore $\mathrm{det}(M(\alpha))\not\equiv0\,(\mathrm{mod}\,\mathfrak{p}_K)$.

We have proved that for $\theta=\pi_L^a$, $\mathfrak{A}_{\theta}$ is $\mathfrak{A}_{L/K}$-principal. By Proposition \ref{pro:hnormalfreeness}, $\mathcal{O}_L$ is $\mathfrak{A}_{L/K}$-free.
\end{proof}

It remains to prove the converse, for which we essentially adapt \cite[Proposition 3]{bertrandiasbertrandiasferton} to this situation. We shall assume that $n\geq 5$ and prove that $\mathfrak{A}_{\theta}$ is not $\mathfrak{A}_{L/K}$-principal. 

Let us call $n=2s+2$ if $n$ is even and $n=2s+1$ if $n$ is odd. We consider the integers $0\leq i\leq p-1$ such that $h=p-i$ takes one of the following values \begin{equation}
    \begin{split}\label{eq:valuesh}
        &2q_{2s-2}, \\
        &q_{2s-2,r}+q_{2s},\quad 0\leq r\leq a_{2s}.
    \end{split}
\end{equation}

Note that these values do not match with any of the elements at the description of $E$ from Proposition \ref{pro:paramE}, so they do not belong to $E$.

We make use of the following lemmas from \cite{bertrandiasbertrandiasferton,gildegpdihedral}.

\begin{lema}[\cite{gildegpdihedral}, Lemma 7.20]\label{lem:nonfreeleq} Suppose that $n\leq 5$. Given $0\leq i\leq p-1$, let $h=p-i$, and let $1\leq k\leq h-1$. If $0\leq j\leq p-1$ is such that $\overline{\mu}_{j,i}^{(k)}\neq0$, then $j=k+i$. Moreover:
\begin{enumerate}
    \item If $h=2q_{2s-2}$, then $k+1=q_{2s-2}$.
    \item If $h=q_{2s-2,r}+q_{2s}$ for some $0\leq r\leq a_{2s}$, then $k+1=q_{2s-2,r'}$ for some $r\leq r'\leq a_{2s}$.
\end{enumerate}
\end{lema}
\begin{proof}
The fact that $\overline{\mu}_{j,i}^{(k)}\neq0$ implies $j=k+i$ follows immediately from Proposition \ref{pro:entriesleqp-1}.

\begin{enumerate}
    \item Note that $$q_{2s}=a_{2s}q_{2s-1}+q_{2s-2}>2q_{2s-2}=h>k+1.$$ Since $q_{2s}\in E$, we deduce that $$\widehat{q_{2s}\frac{a}{p}}<\widehat{(k+1)\frac{a}{p}}.$$ Suppose that $\widehat{(k+1)\frac{a}{p}}>\widehat{q_{2s-2}\frac{a}{p}}$, so in particular $k+1\neq q_{2s-2}$. Then $$1\leq|k+1-q_{2s-2}|<q_{2s-2},$$ so $$d_C(k+1,q_{2s-2})>\widehat{q_{2s-2}\frac{a}{p}}.$$ But $$\widehat{k+1\frac{a}{p}}<\widehat{h\frac{a}{p}}\leq2\widehat{q_{2s-2}\frac{a}{p}},$$ and since $\widehat{q_{2s-2}\frac{a}{p}}<\widehat{(k+1)\frac{a}{p}}$, we obtain that $$d_C(k+1,q_{2s-2})=\widehat{(k+1)\frac{a}{p}}-\widehat{q_{2s-2}\frac{a}{p}}<\widehat{q_{2s-2}\frac{a}{p}},$$ which is a contradiction. We deduce that $$\widehat{(k+1)\frac{a}{p}}\leq\widehat{q_{2s-2}\frac{a}{p}}.$$ Hence, $M_{k+1}$ lies between $M_{q_{2s}}$ and $M_{q_{2s-2}}$. Since $k+1<q_{2s}$, Corollary \ref{coro:density} gives that $k+1=q_{2s-2,r}$ for some $0\leq r<a_{2s}$. But this is lower than $2q_{2s-2}$, so $r=0$ and $k+1=q_{2s-2}$.
    \item We claim that $$\widehat{(k+1)\frac{a}{p}}\geq\widehat{q_{2s}\frac{a}{p}}.$$ If $n=2s+1$, by Proposition \ref{pro:contfrac} \ref{pro:contfrac4} we have that $$\widehat{q_{2s}\frac{a}{p}}=\frac{1}{p}\leq\widehat{(k+1)\frac{a}{p}}.$$ Suppose now that $n=2s+2$. If $\widehat{(k+1)\frac{a}{p}}<\widehat{q_{2s}\frac{a}{p}}$, since $$k+1<h=q_{2s-2,r}+q_{2s}\leq 2q_{2s}<q_{2s+2},$$ we have by Corollary \ref{coro:density} that $k+1=q_{2s,r''}$ for some $0\leq r''\leq a_{2s+2}$. But then $k+1>2q_{2s}\geq h$, which is a contradiction. Then the claim follows. Next, we prove that $k+1\leq q_{2s}$. We have \begin{equation}\label{eq:ineqsemiconv}
        \widehat{(k+1)\frac{a}{p}}<\widehat{h\frac{a}{p}}\leq\widehat{q_{2s}\frac{a}{p}}+\widehat{q_{2s-2,r}\frac{a}{p}},
    \end{equation} whence $d_C(k+1,q_{2s})<\widehat{q_{2s-2,r}\frac{a}{p}}$. If $k+1>q_{2s}$, since $k+1<h=q_{2s}+q_{2s-2,r}$, we see that $$1\leq k+1-q_{2s}<q_{2s-2,r}$$ with $q_{2s-2,r}\in E$, so $$d_C(k+1,q_{2s})>\widehat{q_{2s-2,r}\frac{a}{p}},$$ which is a contradiction. Finally, we prove that $\widehat{(k+1)\frac{a}{p}}\leq\widehat{q_{2s-2,r}\frac{a}{p}}$. Assume, on the contrary, that $\widehat{(k+1)\frac{a}{p}}>\widehat{q_{2s-2,r}\frac{a}{p}}$. Since $1\leq k+1,q_{2s-2,r}\leq q_{2s}$, we have that $$d_C(k+1,q_{2s-2,r})>\widehat{q_{2s}\frac{a}{p}}.$$ But from \eqref{eq:ineqsemiconv} we obtain that $$d_C(k+1,q_{2s-2,r})<\widehat{q_{2s}\frac{a}{p}},$$ which is, once again, a contradiction. We have proved that $M_{k+1}$ is between $M_{q_{2s}}$ and $M_{q_{2s-2,r}}$ with $k+1\leq q_{2s}$. Thus, by Corollary \ref{coro:density}, $k+1=q_{2s-2,r'}$ for some $r\leq r'\leq a_{2s}$.
\end{enumerate}
\end{proof}

\begin{lema}[\cite{gildegpdihedral}, Lemma 7.21]\label{lem:nonfreegeq} Suppose that $n\geq 5$. Let $h$ be as in \eqref{eq:valuesh} and let $0\leq k\leq p-1$. If $k+1>h$ or $k=0$, then $\mu_{j,i}^{(k)}=0$ for every $0\leq j\leq p-1$.
\end{lema}
\begin{proof}
First, we prove that $\widehat{h\frac{a}{p}}<\frac{a}{p}$. Note that $\frac{a}{p}=d_C(1,0)$, the length of the arc in the circle $C$ to the point $M_1$ in anticlockwise direction.

If $h=2q_{2s-2}$, we have by Proposition \ref{pro:arcsemiconv} \begin{equation*}
    \begin{split}
        \frac{a}{p}=\widehat{q_{2s-2}\frac{a}{p}}+\sum_{j=1}^{s-1}a_{2j}\Big|\Big|q_{2j-1}\frac{a}{p}\Big|\Big|&\geq\widehat{q_{2s-2}\frac{a}{p}}+\Big|\Big|q_{2s-3}\frac{a}{p}\Big|\Big|\\&>2\widehat{q_{2s-2}\frac{a}{p}}=\widehat{h\frac{a}{p}}.
    \end{split}
\end{equation*} On the contrary, if $h=q_{2s-2,r}+q_{2s}$, then \begin{equation*}
    \begin{split}
        \frac{a}{p}=\widehat{q_{2s}\frac{a}{p}}+\sum_{j=1}^{s}a_{2j}\Big|\Big|q_{2j-1}\frac{a}{p}\Big|\Big|&\geq\widehat{q_{2s}\frac{a}{p}}+r\Big|\Big|q_{2s-1}\frac{a}{p}\Big|\Big|+\Big|\Big|q_{2s-3}\frac{a}{p}\Big|\Big|\\&>\widehat{q_{2s}\frac{a}{p}}+\widehat{q_{2s-2,r}\frac{a}{p}}\geq\widehat{h\frac{a}{p}}.
    \end{split}
\end{equation*}

For $k=0$, let us prove that $\overline{\mu}_{i,i}^{(0)}=0$, i.e, $d(i)\neq w(i)$. We have that $d(i)=ia_0+\Big\lfloor(i+1)\frac{a}{p}\Big\rfloor$ and, since $h\notin E$, $w(i)=ia_0+\Big\lfloor i\frac{a}{p}\Big\rfloor$. Now, the inequality $\widehat{h\frac{a}{p}}<\frac{a}{p}$ amounts to $\frac{a}{p}+\widehat{i\frac{a}{p}}>1$. Then $\Big\lfloor(i+1)\frac{a}{p}\Big\rfloor=\Big\lfloor i\frac{a}{p}\Big\rfloor+1$, whence $d(i)\neq w(i)$.

Assume that $k+1>h$ and call $m=k+1-h$. If $j\neq m$, we know from Proposition \ref{pro:entriesgeqp} \ref{pro:entriesgeqp1} that $\mu_{j,i}^{(k)}=0$. As for $j=m$, since $h\notin E$ and $\widehat{h\frac{a}{p}}<\frac{a}{p}$, Proposition \ref{pro:entriesgeqp} \ref{pro:entriesgeqp2} gives that $\mu_{j,i}^{(k)}=0$.
\end{proof}

\begin{pro}\label{pro:necessity} If $n\geq5$, then $\mathcal{O}_L$ is not $\mathfrak{A}_{L/K}$-free.
\end{pro}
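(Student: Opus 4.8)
The plan is to show that no $\alpha\in\mathfrak{A}_{\theta}$ is an $\mathfrak{A}_{L/K}$-generator of $\mathcal{O}_L$, i.e.\ that $\det(\overline{M(\alpha)})=0$ in $\mathcal{O}_K/\mathfrak{p}_K$ for every $\alpha$, following the scheme of \cite[Section 7.3.2]{gildegpdihedral}. Keep the $\mathcal{O}_K$-bases $\{\pi_K^{-n_i}w^i\}_{i=0}^{p-1}$ of $\mathfrak{A}_{L/K}$ and $\{\pi_K^{-\nu_i}w^i\}_{i=0}^{p-1}$ of $\mathfrak{A}_{\theta}$, and write $\alpha=\sum_{k=0}^{p-1}x_k\pi_K^{-\nu_k}w^k$ with $x_k\in\mathcal{O}_K$. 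Propositions \ref{pro:entriesleqp-1}\eqref{pro:entriesleqp-11} and \ref{pro:entriesgeqp}\eqref{pro:entriesgeqp1} evaluated at $j=0$, together with $\overline{\mu}_{0,0}^{(0)}=1$, show that the $0$-th row of $\overline{M(\alpha)}$ is $(\overline{x}_0,0,\dots,0)$. Hence if $x_0\notin\mathcal{O}_K^{\times}$ then $\det(\overline{M(\alpha)})=0$; otherwise a cofactor expansion along that row gives $\det(\overline{M(\alpha)})=\overline{x}_0\det(M')$, where $M'$ is the minor obtained by deleting row and column $0$, and we are reduced to proving that $\det(M')$ vanishes identically as a polynomial in $\overline{x}_0,\dots,\overline{x}_{p-1}$.

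Next I would record the generalized-circulant shape of $M'$. Writing $M(\pi_K^{-\nu_k}w^k)'$ for the $(p-1)\times(p-1)$ minor of $M(\pi_K^{-\nu_k}w^k)$ on rows and columns $1,\dots,p-1$, Propositions \ref{pro:entriesleqp-1}\eqref{pro:entriesleqp-11} and \ref{pro:entriesgeqp}\eqref{pro:entriesgeqp1} say that each $M(\pi_K^{-\nu_k}w^k)'$ has, in every column $i$, a single possibly-nonzero entry, lying in the row of $\{1,\dots,p-1\}$ congruent to $k+i$ modulo $p-1$; in particular $M(\mathrm{Id}_H)'$ and $M(\pi_K^{-\nu_{p-1}}w^{p-1})'$ are diagonal, and $\overline{\mu}_{i,i}^{(0)}$ equals $1$ if $n_i=\nu_i$ and $0$ otherwise. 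Therefore, indexing rows and columns by $\mathbb{Z}/(p-1)\mathbb{Z}$, one has $M'_{j,i}=\overline{x}_{\kappa(j,i)}\,c_{j,i}$ for $j\neq i$, where $\kappa(j,i)$ is the representative of $j-i$ in $\{1,\dots,p-2\}$ and $c_{j,i}\in\mathcal{O}_K/\mathfrak{p}_K$ is the constant given by Propositions \ref{pro:entriesleqp-1}\eqref{pro:entriesleqp-12}--\eqref{pro:entriesleqp-13} and \ref{pro:entriesgeqp}\eqref{pro:entriesgeqp2}--\eqref{pro:entriesgeqp3}, while $M'_{i,i}=\overline{x}_0\,\varepsilon_i+\overline{x}_{p-1}\,c_{i,i}$ with $\varepsilon_i\in\{0,1\}$ equal to $1$ exactly when $n_i=\nu_i$.

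The core of the argument, and the step I expect to be the main obstacle, is the combinatorial claim that when $n\geq5$ there are subsets $S,T\subseteq\{1,\dots,p-1\}$ with $|S|+|T|\geq p$ whose $S\times T$ block in $M'$ vanishes identically in $\overline{x}_0,\dots,\overline{x}_{p-1}$, that is, $c_{j,i}=0$ whenever $j\in S$, $i\in T$, $j\neq i$, and $\varepsilon_i=0=c_{i,i}$ for $i\in S\cap T$. A $(p-1)\times(p-1)$ matrix with such a zero block is singular, so this forces $\det(M')=0$ for every choice of the $\overline{x}_k$, hence $\det(\overline{M(\alpha)})=0$ for all $\alpha$, and $\mathfrak{A}_{\theta}$ is not $\mathfrak{A}_{L/K}$-principal. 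To build $S$ and $T$ I would use the description of the set $E$ through the convergents and semiconvergents of $\frac{\ell}{p}$ in Proposition \ref{pro:paramE}: the hypothesis $n\geq5$ gives $q_4<p$, so that $q_0,\dots,q_4$ and the intermediate convergents attached to them supply enough indices, while Proposition \ref{pro:contfrac} and Lemma \ref{lemadensity} determine the fractional parts $\widehat{h\frac{a}{p}}$ with enough precision to make the inequalities in Propositions \ref{pro:entriesleqp-1}\eqref{pro:entriesleqp-12} and \ref{pro:entriesgeqp}\eqref{pro:entriesgeqp2}--\eqref{pro:entriesgeqp3} fail at the chosen $(j,i)$, so that the corresponding $c_{j,i}$ and diagonal entries vanish. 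These preparations mirror \cite[Section 7.3.2]{gildegpdihedral}; since our generator $w$ satisfies the simple relation $w^p=\varepsilon py^{(p-1)(r-1)}w$ of Theorem \ref{thm:hopfalgdimp}, the constants $c_{j,i}$ are more transparent than in loc.\ cit., and the same combinatorial choice of $S$ and $T$ applies.
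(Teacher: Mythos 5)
Your global strategy is the same as the paper's: show that $\det(\overline{M(\alpha)})=0$ for every $\alpha\in\mathfrak{A}_{\theta}$ by exhibiting a set of columns whose possibly nonzero entries are confined to too few common rows, which is precisely the zero-block singularity criterion you invoke; your row-$0$ reduction and the description of the one-entry-per-column support of the matrices $M(\pi_K^{-\nu_k}w^k)$ are correct. The problem is that the decisive step is missing: you yourself flag the construction of the sets $S,T$ with $|S|+|T|\geq p$ as ``the main obstacle'' and only indicate where you would look for them, so the statement is not actually proved. That construction is the entire content of the paper's argument. Writing $n=2s+1$ or $n=2s+2$, the paper selects the columns $i$ with $h=p-i\in\{2q_{2s-2}\}\cup\{q_{2s-2}+rq_{2s-1}+q_{2s}\,:\,0\leq r\leq a_{2s}\}$ and proves, via Proposition \ref{pro:entriesleqp-1} \eqref{pro:entriesleqp-12}, Proposition \ref{pro:entriesgeqp} \eqref{pro:entriesgeqp1} \eqref{pro:entriesgeqp2} and Lemma \ref{lemadensity} (the analogues of Lemmas 7.20 and 7.21 of \cite{gildegpdihedral}), that in these columns $\overline{\mu}_{j,i}^{(k)}$ can be nonzero only when $j=k+i$ and $k+1$ is one of the semiconvergents $q_{2s-2,r'}$ with $r\leq r'\leq a_{2s}$ (respectively $k+1=q_{2s-2}$ when $h=2q_{2s-2}$), and vanishes when $k=0$ or $k\geq h$. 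Since then $j=k+i=p-q_{2s}-1+(r'-r)q_{2s-1}$ depends only on $r'-r$, and $p-q_{2s-2}-1$ is of this form with $r'-r=a_{2s}$, all nonzero entries of these $a_{2s}+2$ columns lie in the same set of at most $a_{2s}+1$ rows, so every term of the determinant contains a zero factor. Nothing in your proposal identifies these columns, these rows, or verifies the vanishing of the relevant entries.

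Moreover, the hint you do give points in a somewhat wrong direction: you propose to use $q_0,\dots,q_4$ and their intermediate convergents (``$n\geq5$ gives $q_4<p$''), whereas the construction is anchored near the top of the continued fraction expansion, using $q_{2s-2},q_{2s-1},q_{2s}$ with $s=\lfloor\frac{n-1}{2}\rfloor$ and the semiconvergents $q_{2s-2,r}$; for large $n$ the first few denominators are irrelevant, and the inequalities that kill the entries are checked with Proposition \ref{pro:contfrac} and Lemma \ref{lemadensity} applied to those top convergents. Your observation that the simpler relation $w^p=\varepsilon p y^{(p-1)(r-1)}w$ makes the constants more transparent is correct and is indeed what lets the paper transport the dihedral-case lemmas with little change, but until the combinatorial core above is supplied your text is an outline of the paper's proof rather than a proof.
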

\begin{proof}
We shall prove that given any element $\alpha\in\mathcal{O}_L$, $\mathrm{det}(M(\alpha))\equiv0\,(\mathrm{mod}\,\mathfrak{p}_K)$. As usual, we number the columns of $M(\alpha)$ from $0$ to $p-1$.

By Lemmas \ref{lem:nonfreeleq} and \ref{lem:nonfreegeq}, the matrix $\overline{M(\alpha)}$ has $a_{2s}+2$ columns (the ones corresponding to \eqref{eq:valuesh}) with at most $a_{2s}+1$ non-zero elements, which are the ones in Lemma \ref{lem:nonfreeleq}. These elements are in the rows of index $j=k+i$, where $k$ is as stated therein.

If $h=2q_{2s-2}$, then $p-j-1=q_{2s-2}$. Otherwise, if $h=q_{2s-2,r}+q_{2s}$ for some $0\leq r\leq a_{2s}$, $$p-j-1=q_{2s-2}+(a_{2s}-(r'-r))q_{2s-1},\quad0\leq r\leq r'\leq a_{2s}.$$ In total, these correspond to $a_{2s}+1$ different values of $j$. Then the $a_{2s}+1$ possibly non-zero elements in each of these columns are allocated in the same rows. Since each summand of $\mathrm{det}(\overline{M(\alpha)})$ includes exactly one entry of each column as a factor, it is necessarily zero. This proves that $\mathfrak{A}_{\theta}$ is not $\mathfrak{A}_{L/K}$-principal for $\theta=\pi_L^a$. By Proposition \ref{pro:hnormalfreeness}, $\mathcal{O}_L$ is not $\mathfrak{A}_{L/K}$-free.
\end{proof}

Putting together Propositions \ref{pro:sufficiency} and \ref{pro:necessity}, we conclude the proof of Theorem \ref{maintheorem} \ref{mainthm4}.

\begin{example}\normalfont Freeness does not necessarily hold for degree $p$ extensions of $p$-adic fields $L/K$ such that $K/\mathbb{Q}_p$ is unramified (unlike the affirmative result when $r=2$, see \cite[Proposition 8.1]{gildegpdihedral}). Let us see a counterexample. Choose $p=13$, $e=1$, $r=12$ and $t=5$, and let $L/K$ be a typical degree $13$ extension of $13$-adic fields arising from these parameters by using Theorem \ref{thm:eldertypicalp} \ref{thm:eldertypicalp2}. We see that $\ell=\frac{5}{12}$, and $c=5$, so $b=\ell-\frac{pc}{r}=\frac{5}{12}-\frac{65}{12}=-5$. On the other hand, $\frac{pe}{p-1}=\frac{13}{12}$, so $\ell>\frac{pe}{p-1}-1$. Now, $$\frac{b}{p}=\frac{-5}{13}=[-1;1,1,1,1,2],$$ so $n=5$. From Theorem \ref{maintheorem} \ref{mainthm4}, we obtain that $\mathcal{O}_L$ is not $\mathfrak{A}_{L/K}$-free. 
\end{example}

In practice, we can apply Theorem \ref{maintheorem} with the continued fraction expansion of $\frac{a}{p}$ for simplicity. This does not affect the criteria at Theorem \ref{maintheorem} because $a$ is the remainder of $b$ mod $p$. Then the continued fraction expansions of $\frac{a}{p}$ and $\frac{b}{p}$ are the same up to the integer part, so in particular they have the same length. Concretely, we can do as follows: the equality $\ell=b+\frac{pc}{r}$ gives $t=br+pc$, from which $b\equiv r^{-1}t\,(\mathrm{mod}\,p)$, where $r^{-1}$ denotes the modular inverse of $r$ mod $p$.

\begin{example}\normalfont It may happen that freeness over the associated order holds for typical degree $p$ extensions of $p$-adic fields with small values of $e$, but it does not for typical degree $p$ extensions of $p$-adic fields with the same values of $p$, $t$ and $r$ but larger values of $e$. For instance, let $L/K$ be a typical degree $p$ extension of $p$-adic fields arising from Theorem \ref{thm:eldertypicalp} with $p=5$ and $r=4$. The ramification jump of $L/K$ is $\ell=\frac{t}{r}$, where $t$ is the ramification jump of $\widetilde{L}/K$, and must satisfy $0<\ell<\frac{pe}{p-1}=\frac{5e}{4}$, that is, $1\leq t<5e$. Recall that $\gcd(c,r)=1$ and $t\equiv c\,(\mathrm{mod}\,r)$, so we have also $\gcd(t,r)=1$. Since $r=4$, $t$ must be odd in our case. Let us study the $\mathfrak{A}_{L/K}$-freeness of $\mathcal{O}_L$ for $e\in\{1,2\}$.

If $e=1$, we obtain that $t\in\{1,3\}$ and $\frac{pe}{p-1}-1=\frac{1}{4}$, so $\ell\geq\frac{pe}{p-1}-1$. Then we use Theorem \ref{maintheorem} \ref{mainthm4}.
\begin{itemize}
    \item If $t=1$, then $a=4$, so $\frac{a}{p}=\frac{4}{5}=[0;1,4]$.
    \item If $t=3$, then $a=1$, so $\frac{a}{p}=\frac{2}{5}=[0;2,2]$.
\end{itemize} In both cases, $n=2$ and $\mathcal{O}_L$ is $\mathfrak{A}_{L/K}$-free.

If $e=2$, we obtain that $t\in\{1,3,7,9\}$. Since $\frac{pe}{p-1}-1=\frac{3}{2}$, for $t\in\{1,3\}$ we have $\ell<\frac{pe}{p-1}-1$ and we use Theorem \ref{maintheorem} \ref{mainthm3} to conclude that $\mathcal{O}_L$ is $\mathfrak{A}_{L/K}$-free, while for $t\in\{7,9\}$ we have $\ell>\frac{pe}{p-1}-1$ and we use Theorem \ref{maintheorem} \ref{mainthm4}. Since the calculation of $a$ does not depend on $e$, for $t\in\{1,3\}$ we have that $n=2$ from the previous case, so $\mathcal{O}_L$ is $\mathfrak{A}_{L/K}$-free. As for the remaining cases:
\begin{itemize}
    \item If $t=7$, then $a=3$, so $\frac{a}{p}=\frac{3}{5}=[0;1,1,2]$. Since $n=3$, $\mathcal{O}_L$ is $\mathfrak{A}_{L/K}$-free.
    \item If $t=9$, then $a=1\mid p-1=13$, so $\mathcal{O}_L$ is $\mathfrak{A}_{L/K}$-free.
\end{itemize}

Hence $\mathcal{O}_L$ is $\mathfrak{A}_{L/K}$-free if $e\leq2$. However, this does not necessarily hold for $e\geq3$. For $t=7$ we have that $5\nmid t$, $(t,4)=1$ and $t<5e$, so we can take $L/K$ from these choices. Now, we have that $\frac{pe}{p-1}-1\geq\frac{11}{4}$, so the criterion we can apply is the one at Theorem \ref{maintheorem} \ref{mainthm3}. Since $n=3$, we conclude that $\mathcal{O}_L$ is not $\mathfrak{A}_{L/K}$-free.
\end{example}

\section*{Acknowledgements}

The author would like to thank the referee, whose remarks and suggestions led to an important improvement of this paper. The author is also grateful with Ilaria Del Corso and Griffith Elder for insightful discussions. This work was supported by the grant PID2022-136944NB-I00 (Ministerio de Ciencia e Innovación), and by Czech Science Foundation, grant 24-11088O.

\printbibliography


\end{document}